\documentclass[11pt]{article}

\usepackage[T1]{fontenc}
\usepackage[utf8]{inputenc}
\usepackage{lmodern}
\usepackage[margin=1.08in]{geometry}
\usepackage{amsmath,amssymb,amsthm,mathtools,mathrsfs}
\usepackage{microtype}
\usepackage[colorlinks=true,linkcolor=blue,citecolor=blue,urlcolor=blue,
  pdftitle={Eventually Tur\'an good II: Vertex-Linear Thresholds and the Cluster-Expansion Method},
  pdfsubject={Vertex-linear thresholds for Tur\'an-goodness and the cluster-expansion method},
  pdfkeywords={generalized Tur\'an problem; Tur\'an-good graph; cluster expansion; polymer model; chromatic polynomial},
  pdfauthor={Xiamiao Zhao, Liying Kang, and Yuanpei Wang}]{hyperref}

\allowdisplaybreaks

\numberwithin{equation}{section}

\newtheorem{theorem}{Theorem}[section]
\newtheorem{lemma}[theorem]{Lemma}

\newtheorem{corollary}[theorem]{Corollary}

\theoremstyle{definition}

\theoremstyle{remark}

\newcommand{\inj}{\operatorname{inj}}
\newcommand{\Inj}{\operatorname{Inj}}
\newcommand{\Aut}{\operatorname{Aut}}
\newcommand{\homc}{\operatorname{hom}}
\newcommand{\ex}{\operatorname{ex}}
\newcommand{\E}{\mathbb E}
\newcommand{\Prb}{\mathbb P}
\newcommand{\cP}{\mathcal P}
\newcommand{\cT}{\mathcal T}
\newcommand{\cC}{\mathcal C}
\newcommand{\1}{\mathbf 1}
\newcommand{\dd}{\,\mathrm d}
\newcommand{\dotcup}{\mathbin{\dot\cup}}

\title{%
  \fontsize{18}{22}\selectfont
  \bfseries
   A Vertex-Linear Threshold for Eventually Tur\'an good
  and the Cluster Method%
}
\author{%
  Xiamiao Zhao\textsuperscript{*}
  \qquad
  Liying Kang\textsuperscript{\(\dagger\)}
  \qquad
  Yuanpei Wang\textsuperscript{\(\ddagger\)}%
}
\date{}

\begin{document}
\maketitle
\begingroup
\renewcommand{\thefootnote}{\fnsymbol{footnote}}
\footnotetext[1]{Department of Mathematical Sciences, Tsinghua University, Beijing 100084, P.R. China. Supported by the China Scholarship Council (No. 202506210250) and the National Natural Science Foundation of China (Grant No. 12571372). Email: \href{mailto:zxm23@mails.tsinghua.edu.cn}{zxm23@mails.tsinghua.edu.cn}.}
\footnotetext[2]{Department of Mathematics, Shanghai University, Shanghai 200444, P.R. China, and Newtouch Center for Mathematics of Shanghai University, Shanghai 200444, P.R. China. Supported by the National Natural Science Foundation of China (Grant Nos. 12571375 and 12331012). Email: \href{mailto:lykang@shu.edu.cn}{lykang@shu.edu.cn}.}
\footnotetext[3]{Corresponding author. Department of Mathematics, Shanghai University, Shanghai 200444, P.R. China. Supported by the China Scholarship Council (No. 202506890067). Email: \href{mailto:boyuan@shu.edu.cn}{boyuan@shu.edu.cn}.}
\endgroup

\begin{abstract}
A graph $H$ is $K_{r+1}$-Tur\'an-good if, for every sufficiently large $n$, the Tur\'an graph $T_r(n)$ maximizes the number of copies of $H$ among all $n$-vertex $K_{r+1}$-free graphs, and it is strictly $K_{r+1}$-Tur\'an-good when this extremal graph is unique. Morrison, Nir, Norin, Rz\k{a}\.zewski and Wesolek proved that for every graph $H$ with at least one edge, when $r\ge 300v(H)^9$, $H$ is $K_{r+1}$-Tur\'an-good.
They asked whether the above bound could be reduced to quadratic order in $v(H)$.

In this paper, we answer their question affirmatively, and give a stronger result.
 We prove that there is an absolute constant $C>0$ such that every graph $H$ with at least one edge is strictly $K_{r+1}$-Tur\'an-good whenever $r\ge Cv(H)$.
Our proof uses a substantially different cluster method based on polymer models. Besides proving the vertex-linear Tur\'an-good threshold, this method appears to have further applications. As one illustration, we prove that for every graph $H$ with at least one edge, the normalized chromatic polynomial $P_H(x)/x^{v(H)}$ is strictly increasing for every real $x\ge C\Delta(H)$, which strengthens the previously best result where $x\ge C\Delta(H)^{3/2}$. 
This proves a conjecture of Fadnavis.
\end{abstract}

\noindent\textbf{Keywords.} Generalized Tur\'an problem, graphon, cluster method, chromatic polynomial.\\
\textbf{Mathematics Subject Classification.} 05C35, 05C31.

\section{Introduction}

\subsection{Tur\'an-goodness and the cluster method}

For fixed graphs $H$ and $F$, the generalized Tur\'an number is
\[
\ex(n,H,F):=\max\{N(H,G): |V(G)|=n,\ G\text{ is }F\text{-free}\},
\]
where $N(H,G)$ denotes the number of unlabeled copies of $H$ in $G$. The generalized Tur\'an problem was introduced by Alon and Shikhelman \cite{AlonShikhelman}. For further results, we refer to the survey of Gerbner and Palmer \cite{GerbnerPalmerSurvey}. The case $H=K_2$ is the classical Tur\'an problem.

Let $T_r(n)$ denote the Tur\'an graph, the complete $r$-partite graph on $n$ vertices whose part sizes differ by at most one. Following Gerbner and Palmer \cite{GerbnerPalmerCounting,GerbnerPalmerExact}, a graph $H$ is $K_{r+1}$-Tur\'an-good if
\[
\ex(n,H,K_{r+1})=N(H,T_r(n))
\]
for every sufficiently large $n$. If $T_r(n)$ is the unique extremal graph for every sufficiently large $n$, then $H$ is strictly $K_{r+1}$-Tur\'an-good. Zykov's theorem \cite{Zykov} shows that every clique $K_t$ with $t\le r$ is strictly $K_{r+1}$-Tur\'an-good. Further exact and stability results for several graph classes were obtained by Ma and Qiu \cite{MaQiu}.

An injective homomorphism from $H$ to $G$ is an injective map $\varphi:V(H)\to V(G)$ such that $uv\in E(H)$ implies $\varphi(u)\varphi(v)\in E(G)$. Let $\Inj(H,G)$ be the set of all such maps, and let
\[
\inj(H,G):=|\Inj(H,G)|.
\]
Since
\[
\inj(H,G)=|\Aut(H)|N(H,G),
\]
maximizing $\inj(H,G)$ is equivalent to maximizing $N(H,G)$. For a nonnegative integer $a$ and an integer $b\ge0$, define $(a)_b:=a(a-1)\cdots(a-b+1)$.

Gerbner and Palmer \cite{GerbnerPalmerCounting,GerbnerPalmerExact} conjectured that every graph is eventually Tur\'an-good. Morrison, Nir, Norin, Rz\k{a}\.zewski and Wesolek \cite{MorrisonEtAl} proved the following theorem.

\begin{theorem}[Morrison, Nir, Norin, Rz\k{a}\.zewski and Wesolek \cite{MorrisonEtAl}]\label{thm:MNNRW}
Let $H$ be a graph and let $r\ge300v(H)^9$. Then $H$ is $K_{r+1}$-Tur\'an-good.
\end{theorem}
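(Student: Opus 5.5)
The plan is to reduce the statement to an optimization over vertex weightings of the clique $K_r$, then show that the uniform weighting is the optimum once $r$ is large compared to $v(H)$. Write $h=v(H)$.

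\emph{Step 1: reduction to complete multipartite hosts.} By Zykov symmetrization, the maximum of $\inj(H,G)$ over $n$-vertex $K_{r+1}$-free graphs is attained, up to an additive $o(n^{h})$ error, by a complete $r$-partite graph. The argument replaces a vertex by a clone of a non-adjacent vertex whenever this does not decrease the count. One caveat: for general $H$ the count is not linear in a single vertex's neighbourhood. The standard fix is to work at the graphon level. The limit density $t(H,W)$ over $K_{r+1}$-free graphons $W$ is maximized by a complete $r$-partite graphon with part weights $x_1,\dots,x_r$; this step uses a removal/stability argument as in Morrison et al. In that case
\[
t(H,W)=p_H(x):=\sum_{\phi}\prod_{v\in V(H)} x_{\phi(v)},
\]
where the sum is over proper colourings $\phi:V(H)\to[r]$.

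\emph{Step 2: local optimality at the uniform point.} The aim is to show that $p_H$ on the simplex is uniquely maximized at $x=(1/r,\dots,1/r)$ when $r\ge 300h^9$. To do this, write $x_i=(1+\epsilon_i)/r$ with $\sum_i\epsilon_i=0$ and expand $p_H$. The first-order term vanishes by symmetry. The second-order term is a negative multiple of $\sum_i\epsilon_i^2$, coming from pairs of vertices joined by an edge (same-colour pairs are forbidden). Then:
\begin{itemize}
\item bound the higher-order terms by $h^{O(1)}/r$ times the quadratic term, so the Hessian dominates in a neighbourhood of radius about $1/h^{O(1)}$;
\item treat points far from uniform separately. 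There, any part of weight much larger or much smaller than $1/r$ loses a constant fraction of the colourings. This can be shown by comparing with the independent-colouring model, using the fact that the probability that $\phi$ is proper is $\prod$-like, roughly $(1-1/r)^{e(H)}$ up to $h^2/r$ corrections.
\end{itemize}
The polynomial bound $300h^9$ comes from crudely summing these error terms over pairs and triples of vertices.

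\emph{Step 3: back to exact $n$.} Stability gives that any near-extremal $G$ is close to $T_r(n)$. A local cleaning argument then finishes: each vertex must have close to balanced degrees into the parts and no edges inside its own part, and moving a misplaced vertex or a single unbalanced vertex strictly increases the count. This yields $\ex(n,H,K_{r+1})=N(H,T_r(n))$ for large $n$.

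\emph{Main obstacle.} I expect the hardest part to be the global (non-local) analysis in Step 2, together with the exact-$n$ cleaning in Step 3. Both require controlling correlations between colour classes of many vertices of $H$, which is where the polynomial losses in $h$ accumulate.
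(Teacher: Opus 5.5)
This theorem is only quoted in the paper. The paper's own argument is for the stronger Theorem~\ref{thm:main}, so that is the proof I compare with. Your proposal has a genuine gap at Step~1, which is where all the difficulty lies.

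The claim that $t(H,W)$ over $K_{r+1}$-free graphons is maximized by a complete $r$-partite graphon does not follow from Zykov symmetrization. In the generality in which you argue it (nothing in Step~1 uses $r\ge 300v(H)^9$) it is false: for $r=2$ and $H=C_5$, every complete bipartite graphon has $t(C_5,W)=0$, while the balanced blow-up of $C_5$ is triangle-free with positive $C_5$-density. The failing mechanism is the one you flag yourself. Zykov's argument replaces all non-neighbours of a maximizing vertex by clones at once. This is harmless for cliques because no clique uses two non-adjacent vertices. For general $H$, the copies using two of these $\Theta(n)$ vertices can number $\Theta(n^{h})$, so the comparison is lost. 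Passing to graphons does not help, because $t(H,W)$ is not affine when weight is shifted between non-adjacent atoms.

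For large $r$ the multipartite reduction is true, but it is essentially the asymptotic form of the theorem. Deferring it to ``a removal/stability argument as in Morrison et al.'' therefore leaves the main step unproved. Step~3 inherits this. Stability needs every near-optimal $K_{r+1}$-free graphon to be close to the Tur\'an graphon, which is again the content of Step~1. The cleaning needs a minimum-degree property of extremal graphs that you never derive. There is also a smaller error in Step~2: a part whose weight differs from $1/r$ by a constant factor does not cost a constant fraction of the proper colourings. For example, an empty part costs only a factor of about $\exp(-e(H)/(r(r-1)))$ (Corollary~\ref{cor:adjacent-colors}).

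The missing idea, as the paper supplies it, is to extract structure from an extremal $G$ directly. The condition $r\gg h$ enters through quantitative estimates rather than through symmetrization.
\begin{itemize}
\item Cloning a single vertex has error genuinely $O(n^{h-2})$, and it gives $I_G(v)\ge h\,p_r(H)n^{h-1}-O(n^{h-2})$ for every $v$ (Lemma~\ref{lem:cloning}).
\item Split a copy through $v$ into its parts in the $K_r$-free neighbourhood and in the non-neighbourhood, and use induction on $v(H)$ with the colouring estimates (Lemma~\ref{lem:rooted-coloring}). This shows that a vertex missing at least $8n/r$ neighbours has $I_G(v)\le (h-c\,e(H)/r)\,p_r(H)n^{h-1}+O(n^{h-2})$. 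Hence $\delta(G)>(1-8/r)n$ (Lemma~\ref{lem:min-degree}).
\item A max-cut $r$-partition and F\"uredi's theorem then give $2L\le B+M$.
\item Every graph on the edit path $G\to G_0\to K_{\mathcal P}\to T_r(n)$ has $\rho<9/r$. So the cluster-expansion estimates (Corollaries~\ref{cor:finite-switching} and~\ref{cor:discrete-balancing}) show that each edge lost or gained changes $\log\inj(H,\cdot)$ by $\frac{2e(H)}{n^2}\bigl(1+O(h/r)\bigr)$. The factor-$2$ slack in $2L\le B+M$ then forces $B=M=L=0$.
\end{itemize}
This one mechanism does both the structural reduction and the exact-$n$ step. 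It also makes your global analysis in Step~2 unnecessary. The degree bound forces every part to have size at most $10n/r$, and in that flat regime Theorem~\ref{thm:profile-balancing} shows that every balancing transfer increases $\Phi_H$, even when some parts are nearly empty.
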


They observed that improving this bound even to quadratic order in $v(H)$ would likely require additional ideas. Very recently, Wang Kang and Zhao \cite{PartI} proved the following explicit edge-linear result.

\begin{theorem}[Wang, Kang and Zhao \cite{PartI}]\label{thm:edge-linear}
For every graph $H$ with at least one edge and every integer $r\ge168e(H)$, the graph $H$ is strictly $K_{r+1}$-Tur\'an-good and $K_{r+1}$-Tur\'an-stable.
\end{theorem}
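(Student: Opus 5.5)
The plan is to reduce the problem to an optimization over weighted complete $r$-partite structures, and then to compare $T_r(n)$ with every competitor through a linear-in-$e(H)$ perturbation estimate.

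\textbf{Step 1: reduce to complete $r$-partite graphs.} First I would apply a Zykov-type symmetrization, using the standard reductions from \cite{MorrisonEtAl}. The goal is to show that for large $n$ an extremal $K_{r+1}$-free graph $G$ may be assumed complete $r$-partite, with parts $V_1,\dots,V_r$ of sizes $x_i n$. Concretely, one passes to the graphon limit. There the relevant functional is the homomorphism density of $H$, and maximizers of injective densities over $K_{r+1}$-free graphons are complete multipartite with at most $r$ parts. Once $H$ is fixed, injectivity only affects lower-order terms.

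\textbf{Step 2: write the count as a polynomial in the part sizes.} In a complete $r$-partite graph, $\inj(H,G)$ is a sum over proper colorings $c:V(H)\to[r]$ of $\prod_i (|V_i|)_{|c^{-1}(i)|}$. To leading order this gives
\[
t(x)=\sum_{c\ \text{proper}}\prod_{v} x_{c(v)}.
\]
This can be rewritten as an inclusion--exclusion over edge subsets $F\subseteq E(H)$:
\[
t(x)=\sum_{F\subseteq E(H)}(-1)^{|F|}\prod_{\text{components }K\text{ of }(V(H),F)} p_{|K|}(x), \qquad p_k=\sum_i x_i^k.
\]

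\textbf{Step 3: local analysis at the uniform point.} Set $x_i=1/r+\varepsilon_i$ with $\sum\varepsilon_i=0$. The $F=\emptyset$ term equals $1$ and is constant. Each edge contributes $-p_2=-1/r-\sum\varepsilon_i^2$. Hence the quadratic form in $\varepsilon$ is $-e(H)\,r^{v(H)-2}\cdot\|\varepsilon\|^2$ plus contributions from larger $F$. Those contributions are bounded by sums over connected structures of size at least $3$, each carrying an extra factor $O(e(H)/r)$. When $r\ge C e(H)$ this makes the Hessian negative definite, giving a strict local maximum at the uniform point. Global control comes from a separate convexity-type argument: writing $t(x)$ in terms of power sums, the uniform point is the unique maximizer on the simplex. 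One must also check integrality, namely that among integer part sizes the balanced ones are optimal. This follows from the strict concavity along transfers of one vertex between two parts.

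\textbf{Step 4: stability and uniqueness.} Finally I would use a stability argument to show that any near-extremal $G$ is close to $T_r(n)$. Then vertex-by-vertex cleaning yields exact uniqueness.

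The main obstacle is the global, non-local comparison in Step 3 with only a linear loss in $e(H)$. The natural route is to bound the higher-order terms via a tree-counting (Penrose-type) bound on connected spanning subgraphs, controlling $\sum_{F\ \text{connected}}$ by $(Ce(H)/r)^{|F|}$-type weights.
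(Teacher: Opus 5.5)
The paper does not prove this statement; it is imported from \cite{PartI}. Its proof of Theorem~\ref{thm:main} is an argument of exactly this kind, though, and measured against it your proposal has a genuine gap at Step~1.

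You claim that maximizers of the $H$-density over $K_{r+1}$-free graphons are complete multipartite with at most $r$ parts. This is not a standard fact, and it is false in general. For $H=C_5$ and $r=2$, complete bipartite graphs contain no $C_5$, while the balanced blow-up of $C_5$ is triangle-free with positive $C_5$-density. Zykov symmetrization works for cliques because no clique contains two nonadjacent vertices. A copy of a general $H$ can, so the count is not linear under cloning. At a stationary point the first-order gain vanishes, and the interaction terms (copies through several cloned vertices) decide the outcome.

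In the range $r\ge168e(H)$ the conclusion of Step~1 does hold, but only as a consequence of the theorem. Excluding non-multipartite $K_{r+1}$-free competitors is the heart of the problem, not a preliminary reduction. Step~4 inherits the gap. Graphon stability presupposes knowing the maximizer, and ``vertex-by-vertex cleaning'' to exact uniqueness needs a quantitative mechanism that you do not supply.

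In the paper that mechanism has three parts.
\begin{itemize}
\item[(i)] A minimum-degree bound $\delta(G)>(1-8/r)n$ for an extremal $G$ (Lemma~\ref{lem:min-degree}). It compares the cloning lower bound $I_G(v)\ge hp_r(H)n^{h-1}-O(n^{h-2})$ with an upper bound for a vertex of missing degree $\eta n$. That upper bound splits the vertices of $H$ between the $K_r$-free neighborhood $N_G(v)$ and its complement, and uses induction together with coloring-probability estimates.
\item[(ii)] A uniform edge-marginal estimate (Corollary~\ref{cor:finite-switching}). In any host with $h\rho$ small, adding or deleting any single edge changes $\log\inj(H,\cdot)$ by $\frac{2m}{n^2}\bigl(1+O(h\rho)+O_H(n^{-1})\bigr)$. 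This is where the tree bound on connected clusters is really needed: for fixed-endpoint activities in an arbitrary dense host, not just for the multipartite polynomial.
\item[(iii)] F\"uredi's theorem applied to a max-cut partition. With $L$ internal edges, $M$ missing crossing edges and imbalance $B$, it gives $2L\le B+M$. Passing from $G$ to $T_r(n)$ therefore gains at least $\frac{m(B+M)}{n^2}(1-3\varepsilon_n)$, which is positive unless $G\cong T_r(n)$.
\end{itemize}

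Your Step~3 corresponds only to the final balancing step, and even there the global claim is unproved. The function $t$ is not concave in general, and it vanishes on faces with fewer than $\chi(H)$ positive coordinates. The cluster expansion (Theorem~\ref{thm:profile-balancing}) controls mass transfers from a larger to a smaller part only on flat profiles with $h\max_i p_i$ small. The paper obtains flatness from the minimum-degree bound.

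Two smaller corrections. The quadratic term at the uniform point is $-e(H)\|\varepsilon\|^2$, with no factor $r^{v(H)-2}$. Disconnected edge sets do not drop out unless you pass to $\log t$. Finally, neither your sketch nor the paper's route pins down the explicit constant $168$.
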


Every edgeless graph is trivially $K_{r+1}$-Tur\'an-good, but not strictly so. Isolated vertices may be removed without changing the relevant extremal conclusions; the precise identity is recorded in Subsection~\ref{subsec:notation-organization}.

Theorem~\ref{thm:edge-linear} gives a quadratic sufficient condition in terms of $v(H)$, while our first main result improves this to a universal linear bound.

\begin{theorem}\label{thm:main}
There is an absolute constant $C>0$ such that, for every graph $H$ with at least one edge and every integer $r\ge Cv(H)$, the graph $H$ is strictly $K_{r+1}$-Tur\'an-good.
\end{theorem}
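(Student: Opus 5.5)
We follow the standard reduction used by Morrison, Nir, Norin, Rz\k{a}\.zewski and Wesolek and in \cite{PartI}. The extremal problem reduces to a local optimization over complete multipartite graphs, and the new input is a cluster (polymer) expansion that controls the relevant local quantity with error terms linear in $v(H)$ rather than in $e(H)$. Write $h=v(H)$ and remove isolated vertices, so $H$ has no isolated vertices. By a symmetrization (Zykov-type) argument, which applies because $\inj(H,\cdot)$ is a sum over vertex pairs of products of common-structure counts, an extremal $K_{r+1}$-free graph may be taken complete multipartite with at most $r$ parts, up to a stability step. Passing to graphons, it suffices to show two things. First, among weight vectors $x=(x_1,\dots,x_r)$ on the simplex, the homomorphism-type density
\[
t_H(x)=\sum_{\sigma:V(H)\to[r]\ \text{proper}}\prod_{v}x_{\sigma(v)}
\]
is uniquely maximized at the uniform vector. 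Second, this maximum is strict with a quadratic gap $t_H(u)-t_H(x)\ge c\,\|x-u\|^2 t_H(u)$ near $u$, together with a global gap away from $u$. A standard stability and cleaning argument then upgrades this to exact and unique extremality of $T_r(n)$ for large $n$, with the injective and rounding corrections absorbed as in \cite{PartI}.

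The key identity rewrites $t_H(x)$ through inclusion--exclusion over edges: $t_H(x)=\sum_{F\subseteq E(H)}(-1)^{|F|}\prod_{\text{components }K\text{ of }F}p_{|K|}(x)$, where $p_k=\sum_i x_i^k$. Equivalently, $t_H(x)$ is a polymer partition function. The polymers are connected vertex subsets $S$ with $|S|\ge2$, the compatibility relation is disjointness, and the activities are $w(S)=p_{|S|}(x)\cdot \sum_{\text{connected spanning }F\subseteq E(H[S])}(-1)^{|F|}$. Near the uniform point $p_k\approx r^{1-k}$. The crucial estimate is the bound $\sum_{\text{conn. spanning }F}|\cdot|\le$ (number of spanning trees–type quantity), which is controlled via Penrose/tree-graph bounds by $(e\,d)^{|S|}$-type factors. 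Here the sign cancellation of the alternating sum must be used, which is exactly the Penrose resummation giving $|\sum(-1)^{|F|}|\le \#\{\text{Penrose trees of }H[S]\}\le$ spanning trees of $H[S]$. The number of connected $S\ni v$ of size $k$ weighted by trees of $H[S]$ is then bounded by $(Ch)^{k-1}$ using only the fact that $H$ has $h$ vertices, since spanning trees on $k$ labeled vertices inside an $h$-vertex graph number at most $k^{k-2}\binom{h}{k-1}$-ish. Consequently the Koteck\'y--Preiss condition holds when $r\ge Ch$, because each extra vertex costs a factor $1/r$ against a combinatorial factor $O(h)$. The cluster expansion then gives $\log t_H(x)=\sum_{\text{clusters}}\phi\prod w$, convergent and analytic in $x$ on a neighbourhood of the simplex region where all $x_i\le C'/r$.

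From the expansion, the leading terms are $\log t_H(x)=h\log p_1-e(H)\,p_2+\text{(higher)}$. Here $p_1=1$, the $-e(H)p_2$ term is uniquely maximized at uniform $x$ with Hessian $-2e(H)I$ on the tangent space, and the higher clusters contribute at most $O(e(H)\cdot h/r)\cdot\|x-u\|^2$ to the Hessian by the convergence bounds. For $r\ge Ch$ this gives local strict concavity and hence the quadratic gap.

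The main obstacle is the global part: handling vectors $x$ with some large $x_i$, or with fewer than $r$ effectively nonzero parts, where the expansion need not converge. For these I would first try a separate crude argument. If some part has weight $\gg 1/r$, merge and split arguments, or the edge-linear Theorem~\ref{thm:edge-linear}'s local moves, show that $t_H$ strictly increases by splitting the heaviest part among near-empty parts. Only the regime $\max x_i\le C'/r$ needs the expansion. Making the constants in this splitting step depend on $h$ linearly, rather than on $e(H)$, is where I expect the real difficulty to lie.
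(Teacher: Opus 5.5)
\textbf{There is a genuine gap, and it is at the first step.} Zykov symmetrization does not reduce the problem to complete multipartite graphs when $H$ is not a clique. Zykov's argument rests on an exact identity: nonadjacent host vertices never lie in a common copy, so cloning one onto the other changes the count by exactly the difference of their rooted counts. For general $H$ this fails. A single cloning step is still correct up to $O(n^{h-2})$, but iterating it over linearly many vertices produces interaction terms of the same order as the main term.

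In fact the reduction is false in general. For $H=C_5$ and $r=2$, the extremal triangle-free graphs are blow-ups of $C_5$, whereas $T_2(n)$, and every complete bipartite graph, contains no $C_5$. So any valid reduction must already use $r\ge Cv(H)$, and your ``up to a stability step'' is essentially the content of the theorem.

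Consequently, showing that the uniform vector uniquely maximizes $t_H(x)$ on the simplex only compares $T_r$ with other complete $r$-partite graphons, not with arbitrary $K_{r+1}$-free graphs. Your local cluster-expansion part is itself fine; it is essentially Theorem~\ref{thm:profile-balancing}. The second gap is the one you flag yourself. Nothing in the proposal handles hosts, multipartite or not, in which some vertex has missing degree much larger than $n/r$; there neither expansion converges. Falling back on the local moves of Theorem~\ref{thm:edge-linear} would reintroduce dependence on $e(H)$.

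The paper closes both gaps with one ingredient, Lemma~\ref{lem:min-degree}: every extremal $G$ satisfies $\rho(G)<8/r$. It is proved by induction on $v(H)$.
\begin{enumerate}
\item One-vertex cloning (Lemma~\ref{lem:cloning}) shows that every vertex lies in at least $hp_r(H)n^{h-1}-O(n^{h-2})$ copies.
\item For a vertex $v$ with missing degree $\eta n\ge 8n/r$, the copies through $v$ are bounded by splitting $H-u$ between $N(v)$ and the non-neighbourhood. The part in $N(v)$, which is $K_r$-free, is handled by the induction hypothesis with $r-1$ colours.
\item The resulting ratios $p_{r-1}(H[T])/p_r(H[T])$, and the cost of deleting the edges between the two sides, are controlled by Corollaries~\ref{cor:adjacent-colors} and~\ref{cor:color-deletion}. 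These come from the $\Delta$-linear chromatic-polynomial theorem, which is what keeps every error linear in $h$.
\item High-degree roots of $H$ lose a factor $e^{-\eta d_H(u)/2}$, low-degree roots are absorbed, and the total falls short of the cloning lower bound.
\end{enumerate}

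Once $\rho(G)<8/r$ is known, the paper never symmetrizes. It takes a max-cut $r$-partition of $G$ itself and uses F\"uredi's Theorem~\ref{thm:Furedi} to get $2L\le B+M$. It then compares $G$ with $K_{\mathcal P}$ by deleting the $L$ internal edges and adding the $M$ missing crossing edges. Each switch changes $\log\inj(H,\cdot)$ by $\frac{2m}{n^2}(1+O(h/r))$ by Corollary~\ref{cor:finite-switching}, which applies because every intermediate graph has $\rho<9/r$. These are exact finite-$n$ estimates, not graphon limits, which is also what yields uniqueness. Finally, the parts automatically have size at most $10n/r$. So the closing balancing step only ever needs your local regime, and the ``global part'' you worry about never arises.
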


Theorems~\ref{thm:edge-linear} and~\ref{thm:main} are complementary: when $H$ is sparse, Theorem~\ref{thm:edge-linear} may give a better range of $r$, whereas for dense $H$, Theorem~\ref{thm:main} gives the stronger bound.

The proof in the present paper is different from  \cite{PartI}. We use the cluster method for abstract polymer models, developed in statistical mechanics by Koteck\'y and Preiss \cite{KoteckyPreiss} and Fern\'andez and Procacci \cite{FernandezProcacci}, and used for graph polynomials by Scott and Sokal \cite{ScottSokal}. 
Cluster method have also been used in enumerative Tur\'an-type problems \cite{JenssenPerkinsPotukuchi}. The method appears to have further potential, as illustrated by the chromatic-polynomial problem below.

\subsection{Chromatic-polynomial monotonicity}

For a graph $H$, let $P_H(x)$ denote its chromatic polynomial, the unique polynomial such that $P_H(q)$ is the number of proper $q$-colorings of $H$ for every positive integer $q$. Chromatic polynomials were introduced by Birkhoff \cite{Birkhoff} in connection with the four-color problem. If $h=v(H)$, then
$
p_q(H):=\frac{P_H(q)}{q^h}
$
is the probability that a uniformly random $q$-coloring of $V(H)$ is proper.

Bartels and Welsh \cite{BartelsWelsh} conjectured that every $h$-vertex graph satisfies
\begin{equation*}
 \frac{P_H(h)}{h^h}\ge \frac{P_H(h-1)}{(h-1)^h}
\end{equation*}

Dong \cite{Dong} proved the stronger one-step inequality
$
\frac{P_H(x)}{x^h}\ge \frac{P_H(x-1)}{(x-1)^h}
$
 holds for every real $x\ge h$. Fadnavis \cite{Fadnavis} replaced the dependence on $h$ by the sufficient condition $x>36\Delta(H)^{3/2}$ and conjectured that a condition of the form $x>C\Delta(H)$ should suffice for some absolute constant $C$. Ning and Yang \cite{NingYang} recently improved the constant $36$ to $10$, while retaining the exponent $3/2$.

Our second main theorem resolves the conjecture of Fadnavis and proves a stronger continuous monotonicity statement.

\begin{theorem}\label{thm:chromatic}
There are absolute constants $C,K>0$ with $C\ge100K$ such that, for every graph $H$ with at least one edge and every real $x\ge C\Delta(H)$,
\[
\left|
\frac{\dd}{\dd x}\log\!\left(\frac{P_H(x)}{x^{v(H)}}\right)-\frac{e(H)}{x^2}
\right|
\le K\frac{e(H)\Delta(H)}{x^3}.
\]
In particular, $P_H(x)/x^{v(H)}$ is strictly increasing on $[C\Delta(H),\infty)$.
\end{theorem}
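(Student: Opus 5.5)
The plan is to write $P_H(x)/x^{h}$ as the partition function of a polymer gas and control its logarithm with a convergent cluster expansion, following Scott and Sokal \cite{ScottSokal}. Here $h=v(H)$ and $x$ is real.

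\textbf{Polymer representation.} By Whitney's expansion,
\[
\frac{P_H(x)}{x^h}=\sum_{F\subseteq E(H)}(-1)^{|F|}x^{c(F)-h},
\]
where $c(F)$ counts the components of the spanning subgraph $(V(H),F)$. Regroup by the non-singleton components. A polymer is a vertex set $S$ with $|S|\ge2$. Its weight is
\[
w_S(x)=x^{1-|S|}\sum_{F}(-1)^{|F|},
\]
where the sum runs over edge sets $F\subseteq E(H[S])$ that connect $S$. Two polymers are compatible when they are disjoint. Then $P_H(x)/x^h=\sum_{\Gamma}\prod_{S\in\Gamma}w_S$, the sum ranging over families $\Gamma$ of pairwise compatible polymers, which is a standard hard-core polymer model.

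By Penrose's tree-graph bound, $\bigl|\sum_F(-1)^{|F|}\bigr|$ is at most the number of spanning trees of $H[S]$. Counting trees rooted at a vertex in a graph of maximum degree $\Delta$ gives
\[
\sum_{S\ni v,\ |S|=k}|w_S(x)|\le (e\Delta/x)^{k-1}.
\]

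\textbf{Cluster expansion.} Apply the Koteck\'y--Preiss criterion with $a(S)=\alpha|S|$. For $x\ge C\Delta$ with $C$ large, the quantity $\sum_{S\ni v}|w_S|e^{\alpha|S|}$ is $O(\Delta/x)$, which is small. The expansion therefore converges and gives
\[
\log\frac{P_H(x)}{x^h}=\sum_{\text{clusters }X}\Phi(X)\prod_{S\in X}w_S(x).
\]
The same criterion holds for complex $z$ in a disc $|z-x|\le x/2$, because $|w_S(z)|\le (2/x)^{|S|-1}\cdot\#\mathrm{trees}$. Consequently $\log(P_H(z)/z^h)$ is analytic there. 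The derivative can then be taken termwise, or bounded by Cauchy's estimate.

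\textbf{Isolating the main term.} The only cluster of total size 2 is a single edge polymer $S=\{u,v\}$, whose weight is $-1/x$. The edge terms therefore contribute $-e(H)/x$ to the logarithm, and their derivative is exactly $e(H)/x^2$.

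Every other cluster either has a polymer with $|S|\ge3$ or consists of at least two overlapping polymers. Such a cluster must contain a vertex $v$ in some edge, and in either case it carries weight $O(\Delta/x)$ beyond the edge term. The Koteck\'y--Preiss tail bound makes this precise. It should give that the remainder $R(z)$, meaning the sum over clusters of total weight-degree at least $2$ that are rooted at an edge, satisfies
\[
|R(z)|\le K'\,e(H)\,\Delta/|z|^2
\]
on the disc. The rooting at an edge comes from summing over the edges each cluster contains, which is what produces the factor $e(H)$ rather than $h$. Cauchy's estimate on a circle of radius $x/2$ then gives $|R'(x)|\le K\,e(H)\Delta/x^3$.

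\textbf{Monotonicity.} For $x\ge C\Delta$ with $C\ge100K$,
\[
\frac{e(H)}{x^2}-K\frac{e(H)\Delta}{x^3}\ge\frac{e(H)}{x^2}\Bigl(1-\frac{1}{100}\Bigr)>0,
\]
so the derivative of the logarithm is positive and $P_H(x)/x^h$ is strictly increasing. Nonvanishing of $P_H$ on $[C\Delta,\infty)$ follows from convergence of the expansion.

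\textbf{Main obstacle.} The hard step is the refined tail bound in the remainder estimate. It must be proportional to $e(H)$, not $v(H)$, and it must gain a full factor $\Delta/x$ beyond the leading term. This requires a weighted Koteck\'y--Preiss bound in which each cluster is charged through an edge it covers, with an extra decay weight $(x/\Delta)^{\cdot}$ built into $a(S)$. I would use the Fern\'andez--Procacci version \cite{FernandezProcacci}, applying its pinned-cluster sums with the activity shifted by $e^{a}$, to extract the second-order decay.
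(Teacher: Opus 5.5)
Your setup matches the paper's: the same polymers, activities $\chi_H(S)x^{1-|S|}$, tree-graph bound, and Koteck\'y--Preiss criterion with $a(S)=\alpha|S|$. The Cauchy-estimate finish is a legitimate alternative to the paper's differentiation step. \textbf{The gap} is that the estimate carrying all the quantitative content, $|R(z)|\le K'e(H)\Delta/|z|^2$ for $R(z)=\log(P_H(z)/z^{h})+e(H)/z$, is only asserted (``it should give''). You defer it to a weighted Koteck\'y--Preiss bound with extra decay that you never carry out. No refinement of Koteck\'y--Preiss is needed, but two concrete steps are missing.

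First, root each cluster at one of its polymers rather than at an edge. Dropping the $k=1$ term of the standard rooted bound (Lemma~\ref{lem:KP}) and using $1/k!\le1/(k-1)!$ gives
\[
\Bigl|\log\Xi(w)-\sum_S w_S\Bigr|\le\sum_S|w_S|\bigl(e^{2\delta|S|}-1\bigr),\qquad \delta=O(\Delta/|z|).
\]
This is the case $z'=0$ of Lemma~\ref{lem:polymer-perturbation}. The edge polymers then contribute $(e(H)/|z|)(e^{4\delta}-1)=O(e(H)\Delta/|z|^2)$, so the extra factor $\Delta/|z|$ comes for free.

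Second, the polymers with $|S|\ge3$ must be summed with an edge-rooted count; they appear both in $\sum_S w_S$ and in the error term, where $e^{2\delta|S|}-1\le e^{|S|}$. Your vertex-rooted bound only gives $\sum_{|S|\ge3}|w_S|e^{|S|}=O(h\Delta^2/|z|^2)$, which is too weak, e.g.\ by a factor $\Delta$ for a star. What you need is that the number of pairs $(S,T)$ with $|S|=s\ge3$ and $T$ a spanning tree of $H[S]$ is at most $8e(H)(4\Delta)^{s-2}$. This follows by distinguishing an oriented edge of $T$ and embedding two plane-tree shapes (Lemma~\ref{lem:tree-counts}(2)), and it yields $O(e(H)\Delta/|z|^2)$.

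With these two steps, the bound on $R$ holds uniformly on $|z|\ge x/2$ for $x\ge C\Delta$, since the criterion only sees $|w_S|$. Cauchy's estimate on $|z-x|=x/2$ then gives $|R'(x)|\le 8K'e(H)\Delta/x^3$. Thus $K=8K'$ does not depend on $C$, and you may take $C\ge100K$.

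The paper instead stays on the real line. It differentiates by the chain rule in the activities, $\frac{\dd}{\dd x}\log\Xi=\sum_S z_x'(S)\,\partial_{w_S}\log\Xi$, and uses the same rooted bound in the form $|\partial_{w_S}\log\Xi-1|\le e^{2\delta|S|}-1$ (Lemma~\ref{lem:differential-polymer}). It then applies the edge-rooted count to $|z_x'(S)|=(|S|-1)|\chi_H(S)|x^{-|S|}$. Your route needs only a zeroth-order remainder bound plus analyticity, at the cost of constant factors from passing to the disc. The paper's route gives the derivative directly, and its activity-derivative bound is the same tool that drives its switching and balancing estimates.
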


Theorem~\ref{thm:chromatic} is stronger than the consecutive-value comparison conjectured by Fadnavis. Integrating its logarithmic derivative over two adjacent coloring levels also gives
\[
\log\frac{p_{r-1}(H)}{p_r(H)}
=-\frac{e(H)}{r(r-1)}+O\!\left(\frac{e(H)\Delta(H)}{r^3}\right),
\]
which is used in the proof of Theorem~\ref{thm:main}.

\subsection{Notation and organization}\label{subsec:notation-organization}

Throughout the paper, all graphs are finite and simple. For a graph $G$, let $V(G)$ and $E(G)$ denote its vertex and edge sets, and let $v(G):=|V(G)|$ and $e(G):=|E(G)|$. For $u\in V(G)$, let $N_G(u)$ be its neighborhood and let $d_G(u):=|N_G(u)|$. Let $\delta(G)$, $\Delta(G)$, and $\chi(G)$ denote the minimum degree, maximum degree, and chromatic number of $G$, respectively.

If $H=H_0\dotcup tK_1$, where $H_0$ has no isolated vertices, then
\begin{equation}\label{eq:isolated}
\inj(H,G)=(n-v(H_0))_t\inj(H_0,G)
\end{equation}
for every $n$-vertex graph $G$. The factor is independent of $G$.

The proof of Theorem~\ref{thm:main} combines these estimates with induction and F\"uredi's multipartite reduction. After establishing a high minimum-degree condition, switching and balancing arguments show that an extremal graph must be isomorphic to $T_r(n)$. Section~2 develops the polymer estimates, Section~3 proves the multipartite balancing estimates, Section~4 proves Theorem~\ref{thm:chromatic}, and Section~5 proves Theorem~\ref{thm:main}.

\section{Polymer estimates}\label{sec:polymer}

All asymptotic notation in the extremal part of the proof refers to $n\to\infty$ with $H$ and $r$ fixed. Constants denoted by $C,c$ without subscripts are absolute and may change from line to line. Constants in $O_{H,r}(\cdot)$ may depend on $H$ and $r$. For graphs $J$ and $F$, let $\homc(J,F)$ denote the number of all graph homomorphisms from $J$ to $F$, and let $\cT(J)$ denote the set of spanning trees of $J$.

For every graph $J$ on $j$ vertices and every $N$-vertex graph $F$,
\begin{equation}\label{eq:collision}
0\le \homc(J,F)-\inj(J,F)\le \binom{j}{2}N^{j-1}.
\end{equation}
Indeed, every noninjective map identifies at least one pair of vertices of $J$.

By \eqref{eq:isolated}, isolated vertices can be removed without changing the extremal graphs. We shall therefore assume, when proving Theorem~\ref{thm:main}, that $H$ has no isolated vertices.

We use the standard polymer-model terminology of Koteck\'y--Preiss \cite{KoteckyPreiss} and Fern\'andez--Procacci \cite{FernandezProcacci}; every notion needed in the proofs is defined explicitly below.

\subsection{Abstract polymer estimates}

Let $X$ be a finite ground set and let $\cP\subseteq 2^X\setminus\{\varnothing\}$. The members of $\cP$ are called \emph{polymers}. Two polymers $\gamma,\eta$ are compatible when $\gamma\cap\eta=\varnothing$ and incompatible otherwise. In particular, every polymer is incompatible with itself. An \emph{activity} is a complex weight $z_\gamma$ assigned to a polymer $\gamma$; an activity system is $z=(z_\gamma)_{\gamma\in\cP}$. Define
\[
\Xi(z)=
\sum_{\substack{\Gamma\subseteq\cP\\
                  \Gamma\text{ pairwise compatible}}}
\prod_{\gamma\in\Gamma}z_\gamma.
\]
Thus $\Gamma$ is a family of pairwise disjoint polymers, and the empty family contributes $1$ to the sum in $\Xi(z)$. We use $|z_\gamma|$ for the complex modulus of an activity and $|\gamma|$ for the cardinality of the set $\gamma$.

For an ordered tuple $(\gamma_1,\ldots,\gamma_k)$, its simple incompatibility graph $I(\gamma_1,\ldots,\gamma_k)$ has vertex set $[k]$, with $ij$ an edge precisely when $\gamma_i\cap\gamma_j\ne\varnothing$.
We call the ordered tuple $(\gamma_1,\ldots,\gamma_k)$ a
\emph{cluster} if its incompatibility graph
$I(\gamma_1,\ldots,\gamma_k)$ is connected. Thus the polymers in a
cluster need not be pairwise incompatible; it is enough that every
two of them can be joined by a chain of incompatibilities. In the
present set-polymer model, this means a chain of nonempty
intersections. Repetitions are allowed, since the expansion below is
over ordered tuples in $\mathcal P^k$. This agrees with the standard
terminology for abstract polymer models
\cite{FernandezProcacci}.
We define 
\[
\phi^{\mathrm T}(\gamma_1,\ldots,\gamma_k)
=
\sum_{\substack{A\subseteq E(I(\gamma_1,\ldots,\gamma_k))\\
                  ([k],A)\text{ connected}}}
(-1)^{|A|}.
\]
Here $([k],A)$ retains the full vertex set $[k]$. For $k=1$ it is connected and the empty edge set gives $\phi^{\mathrm T}(\gamma_1)=1$. If the incompatibility graph is disconnected, the displayed sum is empty and the coefficient is zero.

Let
\[
f(\gamma,\eta)=
\begin{cases}
-1,&\gamma\cap\eta\ne\varnothing,\\
0,&\gamma\cap\eta=\varnothing.
\end{cases}
\]
For every ordered tuple $(\gamma_1,\ldots,\gamma_k)$,
\[
\prod_{1\le i<j\le k}\bigl(1+f(\gamma_i,\gamma_j)\bigr)
=
\begin{cases}
1,&(\gamma_1,\ldots,\gamma_k)\text{ are pairwise compatible},\\
0,&\text{otherwise}.
\end{cases}
\]
For every activity system $w=(w_\gamma)_{\gamma\in\cP}$,
writing every compatible family in all possible orders therefore gives the formal identity
\begin{equation}\label{eq:Xi-ordered}
\Xi(w)=1+\sum_{k\ge1}\frac1{k!}
\sum_{\gamma_1,\ldots,\gamma_k\in\cP}
\left(\prod_{i=1}^k w_{\gamma_i}\right)
\prod_{1\le i<j\le k}\bigl(1+f(\gamma_i,\gamma_j)\bigr).
\end{equation}
Repeated polymers contribute zero because a nonempty polymer intersects itself. Expanding the last product gives
\begin{equation}\label{eq:graph-expansion}
\prod_{i<j}\bigl(1+f(\gamma_i,\gamma_j)\bigr)
=
\sum_{G\text{ on }[k]}\left(\prod_{ij\in E(G)}f(\gamma_i,\gamma_j)\right).
\end{equation}

Let $\cC_k$ be the set of connected graphs on $[k]$ and define
\begin{equation}\label{eq:Cw}
C(w)=\sum_{k\ge1}\frac1{k!}
\sum_{\gamma_1,\ldots,\gamma_k\in\cP}
\left(
\sum_{G\in\cC_k}\prod_{ij\in E(G)}f(\gamma_i,\gamma_j)
\right)
\prod_{i=1}^k w_{\gamma_i}.
\end{equation}
To see that $\Xi(w)=\exp C(w)$, fix a graph $G$ on $[k]$ and let $\pi(G)$ be the partition of $[k]$ into the vertex sets of its connected components. Then
\[
\prod_{ij\in E(G)}f(\gamma_i,\gamma_j)
=
\prod_{B\in\pi(G)}
\prod_{ij\in E(G[B])}f(\gamma_i,\gamma_j).
\]
Conversely, a set partition $\pi$ of $[k]$ together with one connected graph on every block $B\in\pi$ determines a unique graph whose connected components have vertex sets $B$. Hence, for every fixed ordered tuple,
\[
\sum_{G\text{ on }[k]}\left(\prod_{ij\in E(G)}f(\gamma_i,\gamma_j)\right)
=
\sum_{\pi}
\prod_{B\in\pi}
\left(
\sum_{G_B\in\cC_B}\prod_{ij\in E(G_B)}f(\gamma_i,\gamma_j)
\right).
\]
The first sum is over all set partitions $\pi$ of $[k]$.
The expansion of $\exp C(w)=\sum_{s\ge0}C(w)^s/s!$ produces exactly the right-hand side: the factor $1/s!$ forgets the order of the $s$ connected components, while their vertex sets form the partition $\pi$. Comparing with \eqref{eq:Xi-ordered} and \eqref{eq:graph-expansion} yields $\Xi(w)=\exp C(w)$.

For a fixed tuple, a connected graph contributes to the inner sum in \eqref{eq:Cw} only if every one of its edges belongs to $I(\gamma_1,\ldots,\gamma_k)$. In that case every edge factor is $-1$, so the connected-graph sum is
\[
\sum_{\substack{A\subseteq E(I(\gamma_1,\ldots,\gamma_k))\\
                  ([k],A)\text{ connected}}}
(-1)^{|A|}
=
\phi^{\mathrm T}(\gamma_1,\ldots,\gamma_k).
\]
Therefore
\begin{equation}\label{eq:cluster-log}
\log\Xi(w)
=
\sum_{k\ge1}\frac1{k!}
\sum_{\gamma_1,\ldots,\gamma_k\in\cP}
\phi^{\mathrm T}(\gamma_1,\ldots,\gamma_k)
\prod_{i=1}^k w_{\gamma_i}.
\end{equation}
Since $\phi^{\mathrm T}(\gamma_1,\ldots,\gamma_k)=0$ whenever
$I(\gamma_1,\ldots,\gamma_k)$ is disconnected,
\eqref{eq:cluster-log} is precisely a sum over clusters. This is the
sense in which passing to the logarithm retains only connected
contributions.

For nonnegative radii $\bar z=(\bar z_\gamma)_{\gamma\in\cP}$, define the closed polydisc
\[
D(\bar z)=\{w=(w_\gamma)_{\gamma\in\cP}: |w_\gamma|\le\bar z_\gamma\text{ for every }\gamma\}.
\]
The following standard cluster lemma is the external input used below.

\begin{lemma}[Koteck\'y--Preiss \cite{KoteckyPreiss}, Fern\'andez--Procacci \cite{FernandezProcacci}]\label{lem:KP}
Let $a:\cP\to[0,\infty)$ and suppose that
\[
\sum_{\eta:\,\eta\cap\gamma\ne\varnothing}\bar z_\eta e^{a(\eta)}
\le a(\gamma)
\qquad(\gamma\in\cP).
\]
Then $\Xi(w)\ne0$ throughout $D(\bar z)$. The series \eqref{eq:cluster-log} converges absolutely there and may be differentiated term by term. Moreover, for every root polymer $\gamma$,
\begin{equation}\label{eq:rooted-cluster}
\sum_{k\ge1}\frac1{(k-1)!}
\sum_{\gamma_2,\ldots,\gamma_k\in\cP}
\left|\phi^{\mathrm T}(\gamma,\gamma_2,\ldots,\gamma_k)\right|
\prod_{j=2}^k\bar z_{\gamma_j}
\le e^{a(\gamma)}.
\end{equation}
For $k=1$, the inner product and the inner sum each consist of the single term $1$.
\end{lemma}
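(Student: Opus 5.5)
The statement is the Kotecký--Preiss cluster lemma, which the paper uses as an external input. I will prove it along the standard Fernández--Procacci route: bound truncated Ursell coefficients by spanning trees, then sum over trees inductively from the root. Two steps are needed.

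\textbf{Step 1: tree-graph inequality.} For every cluster $(\gamma_1,\ldots,\gamma_k)$ I show
\[
\bigl|\phi^{\mathrm T}(\gamma_1,\ldots,\gamma_k)\bigr|\le |\cT(I(\gamma_1,\ldots,\gamma_k))|.
\]
To see this, fix a total order on the edges of $I$ and use Penrose's partition scheme. Each spanning tree $T$ determines an interval $[T,R(T)]$ of connected spanning subgraphs. These intervals partition the connected spanning edge sets. The alternating sum $\sum(-1)^{|A|}$ vanishes on every interval with $R(T)\ne T$ and equals $\pm1$ when $R(T)=T$. This yields the bound.

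\textbf{Step 2: rooted tree sums.} Substituting Step 1 into the left side of \eqref{eq:rooted-cluster} bounds it by a sum over labelled trees $T$ on $[k]$, together with tuples in which adjacent polymers intersect. I reorganize this sum by root degree and subtrees: rooted trees correspond to a root together with an unordered collection of rooted subtrees. The factor $1/(k-1)!$ exactly absorbs the relabelings.

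Let $S_N(\gamma)$ denote the contribution of trees of depth at most $N$ rooted at $\gamma$. This gives the recursion
\[
S_{N+1}(\gamma)\le \sum_{m\ge0}\frac1{m!}\Bigl(\sum_{\eta\cap\gamma\ne\varnothing}\bar z_\eta S_N(\eta)\Bigr)^m
=\exp\Bigl(\sum_{\eta\cap\gamma\ne\varnothing}\bar z_\eta S_N(\eta)\Bigr),
\]
with $S_0=1$. Inducting with the hypothesis $\sum_{\eta\cap\gamma\neq\varnothing}\bar z_\eta e^{a(\eta)}\le a(\gamma)$ gives $S_N(\gamma)\le e^{a(\gamma)}$ for all $N$. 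Letting $N\to\infty$ by monotone convergence yields \eqref{eq:rooted-cluster}. I expect this step to be the main obstacle: the combinatorial bookkeeping matching ordered tuples with $1/(k-1)!$ to the recursion must be done carefully, for instance via the Cayley-type count of trees with prescribed root degree.

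\textbf{Step 3: consequences.} Summing \eqref{eq:rooted-cluster} over roots $\gamma\in\cP$ (a finite set) gives absolute convergence of \eqref{eq:cluster-log} uniformly on $D(\bar z)$. Each term is a monomial, so the series is analytic, and it may be differentiated term by term by the Weierstrass theorem on a slightly smaller polydisc, or directly by domination.

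On the polydisc $\{|w_\gamma|<\epsilon\}$ for small $\epsilon$, the formal identity $\Xi=\exp C$ proved above holds analytically. The function $C(w)$ extends analytically to all of $D(\bar z)$, and $\Xi$ is a polynomial. By the identity theorem on the connected domain, $\Xi=\exp C$ throughout $D(\bar z)$. In particular $\Xi\neq0$ there.
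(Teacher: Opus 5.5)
Your proof is correct, and it supplies an argument the paper does not give: the paper imports this lemma as an external input from Koteck\'y--Preiss and Fern\'andez--Procacci. What you wrote is the standard argument behind that citation.

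\begin{itemize}
\item You bound $|\phi^{\mathrm T}|$ by the number of spanning trees of the incompatibility graph.
\item You observe that the rooted tree sum satisfies $\mathcal T(\gamma)=\exp\bigl(\sum_{\eta\cap\gamma\ne\varnothing}\bar z_\eta\mathcal T(\eta)\bigr)$ exactly. The factor $1/(k-1)!$ is absorbed because a block of size $b$ hanging off the root has $b$ choices of the root's child, and $b/b!=1/(b-1)!$.
\item You induct on depth.
\end{itemize}

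This is the Fern\'andez--Procacci observation that the crude all-trees bound already yields the Koteck\'y--Preiss (equivalently, Dobrushin) criterion. The citation buys the paper brevity. Your route makes explicit that nothing sharper than spanning-tree counting is needed for the $e^{a(\gamma)}$ bound.

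Your Step~1 is exactly the paper's Lemma~\ref{lem:tree-bound} applied to $J=I(\gamma_1,\ldots,\gamma_k)$ with every $a_e=1$, so you may simply cite it. If you prove it yourself, note that a total order on the edges gives the minimum-spanning-tree (Kruskal) interval partition, not Penrose's depth-based scheme. Either scheme yields the bound.

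Two small points in Step~3.

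First, $D(\bar z)$ is closed, and it is degenerate in coordinates with $\bar z_\gamma=0$. So the identity theorem should be applied on the open polydisc in the coordinates with $\bar z_\gamma>0$, and the identity then extended by continuity. More simply, absolute convergence of \eqref{eq:cluster-log} at $w$ lets you expand $\sum_s C(w)^s/s!$ monomial by monomial. Then the formal identity $\Xi=\exp C$ holds pointwise on all of $D(\bar z)$, with no continuation argument at all.

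Second, term-by-term differentiation is needed up to the boundary. Lemmas~\ref{lem:polymer-perturbation} and~\ref{lem:differential-polymer} evaluate derivatives at points with $|w_\gamma|=\bar z_\gamma$. The Weierstrass theorem on a smaller polydisc does not reach these points. Your domination alternative does, because the termwise $w_\gamma$-derivative series is bounded uniformly on $D(\bar z)$ by \eqref{eq:rooted-cluster}. You should rely on that argument rather than Weierstrass.
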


\begin{lemma}\label{lem:polymer-perturbation}
Let $z,z'$ be two activity systems for the same polymer model on subsets of $X$. Define
\[
\bar z_\gamma=\max\{|z_\gamma|,|z'_\gamma|\},
\qquad
\delta=\max_{x\in X}\sum_{\gamma\ni x}\bar z_\gamma e^{|\gamma|}.
\]
If $\delta\le1/4$, then $\Xi(z)$ and $\Xi(z')$ are nonzero and
\begin{equation}\label{eq:polymer-perturbation}
\left|
\log\Xi(z)-\log\Xi(z')-\sum_{\gamma\in\cP}(z_\gamma-z'_\gamma)
\right|
\le
\sum_{\gamma\in\cP}|z_\gamma-z'_\gamma|
\bigl(e^{2\delta|\gamma|}-1\bigr).
\end{equation}
In particular, if $z'=0$, then
\[
\left|
\log\Xi(z)-\sum_{\gamma\in\cP}z_\gamma
\right|
\le
\sum_{\gamma\in\cP}|z_\gamma|
\bigl(e^{2\delta|\gamma|}-1\bigr).
\]
\end{lemma}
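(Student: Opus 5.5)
Apply Lemma~\ref{lem:KP} with $a(\gamma)=|\gamma|$. We then interpolate between $z'$ and $z$ along the segment $w(t)=z'+t(z-z')$, $t\in[0,1]$, and control the derivative of $\log\Xi$ via the rooted cluster bound \eqref{eq:rooted-cluster}.

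\emph{Step 1: the Koteck\'y--Preiss condition.} Every point of the segment lies in $D(\bar z)$, since $|w_\gamma(t)|\le(1-t)|z'_\gamma|+t|z_\gamma|\le\bar z_\gamma$. Fix $\gamma$. Every $\eta$ meeting $\gamma$ contains some $x\in\gamma$, so
\[
\sum_{\eta\cap\gamma\ne\varnothing}\bar z_\eta e^{|\eta|}\le\sum_{x\in\gamma}\sum_{\eta\ni x}\bar z_\eta e^{|\eta|}\le\delta|\gamma|\le|\gamma|.
\]
Lemma~\ref{lem:KP} therefore applies. This gives $\Xi\ne0$ on $D(\bar z)$ and shows that $\log\Xi$, defined by \eqref{eq:cluster-log}, is analytic there with termwise derivatives. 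In particular $\Xi(z),\Xi(z')\ne0$. The bound also holds with the radii $\bar z$ replaced by $s\bar z$ for any $0<s\le1$, but that refinement will not be needed at first.

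\emph{Step 2: derivative formula.} Differentiating \eqref{eq:cluster-log} in the single activity $w_\gamma$ and using the symmetry of $\phi^{\mathrm T}$ under permutations (each of the $k$ positions can be the marked one), we get
\[
\frac{\partial\log\Xi}{\partial w_\gamma}(w)=\sum_{k\ge1}\frac1{(k-1)!}\sum_{\gamma_2,\dots,\gamma_k}\phi^{\mathrm T}(\gamma,\gamma_2,\dots,\gamma_k)\prod_{j\ge2}w_{\gamma_j}.
\]
The $k=1$ term equals $1$. Hence $\bigl|\partial_\gamma\log\Xi(w)-1\bigr|$ is at most the $k\ge2$ part of the left-hand side of \eqref{eq:rooted-cluster}.

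\emph{Step 3: the key estimate (main obstacle).} We need the $k\ge2$ part to be at most $e^{2\delta|\gamma|}-1$. A direct use of \eqref{eq:rooted-cluster} gives only $e^{|\gamma|}-1$, which is too weak. To fix this, rescale. Choose $a_s(\gamma)=c\,s|\gamma|$ and use radii $s\bar z$, where $s$ is a free parameter. The condition in Lemma~\ref{lem:KP} becomes
\[
s\sum_{\eta\cap\gamma\ne\varnothing}\bar z_\eta e^{cs|\eta|}\le cs|\gamma|.
\]
For $cs\le1$ this holds as soon as $\delta\le c$. So for every $s\le1/c$ the $k\ge2$ part, call it $R_\gamma(s)$, satisfies $1+R_\gamma(s)\le e^{cs|\gamma|}$.

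Now $R_\gamma(s)=\sum_{k\ge2}s^{k-1}b_k$ with $b_k\ge0$. A power series with nonnegative coefficients satisfies $R_\gamma(1)\le R_\gamma(s)/s$ for $s\ge1$. Take $c=\delta$ and $s=1/\delta\ge4$. The condition then requires $\delta\le c$, which holds with equality, and $cs=1$. This yields
\[
R_\gamma(1)\le\delta\bigl(e^{|\gamma|}-1\bigr).
\]
That is not quite $e^{2\delta|\gamma|}-1$ for large $|\gamma|$, so optimize $s$ more carefully. The better route is the standard reading of Kotecký--Preiss: with the choice $a(\gamma)=2\delta|\gamma|$ the hypothesis reads $\sum_{\eta\cap\gamma\neq\varnothing}\bar z_\eta e^{2\delta|\eta|}\le\delta|\gamma|\le 2\delta|\gamma|$. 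This holds because $e^{2\delta|\eta|}\le e^{|\eta|}$, so the sum is at most $\delta|\gamma|$. This is exactly where $\delta\le1/4$, hence $2\delta\le1$, is used. Then \eqref{eq:rooted-cluster} gives
\[
\bigl|\partial_\gamma\log\Xi(w)-1\bigr|\le e^{2\delta|\gamma|}-1
\]
uniformly on $D(\bar z)$. So I would take $a(\gamma)=2\delta|\gamma|$ directly in Step 1 instead of $|\gamma|$; the main thing to check is that this verification is legitimate.

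\emph{Step 4: integrate.} By the chain rule,
\[
\log\Xi(z)-\log\Xi(z')=\int_0^1\sum_\gamma(z_\gamma-z'_\gamma)\,\partial_\gamma\log\Xi(w(t))\dd t.
\]
Interchanging the sum and the integral is justified by the absolute convergence from Lemma~\ref{lem:KP}; if $\cP$ is finite this is trivial anyway. Subtracting $\sum_\gamma(z_\gamma-z'_\gamma)$ and applying the bound from Step 3 pointwise in $t$ gives \eqref{eq:polymer-perturbation}. Setting $z'=0$ gives $\Xi(0)=1$, hence $\log\Xi(0)=0$, which is the special case.
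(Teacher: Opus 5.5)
Your proposal is correct and, once you take $a(\gamma)=2\delta|\gamma|$ from the start, it is essentially the paper's proof. You verify the Koteck\'y--Preiss condition via $e^{2\delta|\eta|}\le e^{|\eta|}$, use the rooted cluster bound to get $\bigl|\partial_\gamma\log\Xi(w)-1\bigr|\le e^{2\delta|\gamma|}-1$ on $D(\bar z)$, and integrate along the segment from $z'$ to $z$; the rescaling detour in Step 3 is unnecessary and can be dropped.
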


\begin{proof}
Set $a(\gamma)=2\delta|\gamma|$. Since $2\delta\le1$,
\[
\sum_{\eta:\,\eta\cap\gamma\ne\varnothing}\bar z_\eta e^{a(\eta)}
\le
\sum_{x\in\gamma}\sum_{\eta\ni x}\bar z_\eta e^{|\eta|}
\le \delta|\gamma|\le a(\gamma).
\]
The first inequality may count an incompatible polymer once for each point of its intersection with $\gamma$, which is harmless for an upper bound. Thus Lemma~\ref{lem:KP} applies.

Differentiating \eqref{eq:cluster-log} with respect to $w_\gamma$ gives
\[
\frac{\partial}{\partial w_\gamma}\log\Xi(w)
=
\sum_{k\ge1}\frac1{(k-1)!}
\sum_{\gamma_2,\ldots,\gamma_k\in\cP}
\phi^{\mathrm T}(\gamma,\gamma_2,\ldots,\gamma_k)
\prod_{j=2}^k w_{\gamma_j}.
\]
The $k=1$ term is $1$. By \eqref{eq:rooted-cluster}, uniformly on $D(\bar z)$,
\begin{equation}\label{eq:derivative-bound}
\left|
\frac{\partial}{\partial w_\gamma}\log\Xi(w)-1
\right|
\le e^{2\delta|\gamma|}-1.
\end{equation}
Define $w(t)=(1-t)z'+tz$ for $0\le t\le1$ and
\[
F(t)=\log\Xi(w(t))-\sum_{\gamma\in\cP}w_\gamma(t).
\]
The segment lies in $D(\bar z)$. The chain rule and \eqref{eq:derivative-bound} give
\[
|F'(t)|
\le
\sum_{\gamma\in\cP}|z_\gamma-z'_\gamma|
\bigl(e^{2\delta|\gamma|}-1\bigr).
\]
Integrating from $0$ to $1$ proves \eqref{eq:polymer-perturbation}. Taking $z'=0$ and using $\Xi(0)=1$ proves the final assertion.
\end{proof}

\begin{lemma}\label{lem:differential-polymer}
Let $I$ be an interval and let $z(x)=(z_\gamma(x))_{\gamma\in\cP}$ be a continuously differentiable family of activities. At a point $x\in I$, define
\[
\delta(x):=\max_{v\in X}\sum_{\gamma\ni v}|z_\gamma(x)|e^{|\gamma|}.
\]
If $\delta(x)\le1/4$, then
\[
\left|
\frac{\dd}{\dd x}\log\Xi(z(x))-\sum_{\gamma\in\cP}z'_\gamma(x)
\right|
\le
\sum_{\gamma\in\cP}|z'_\gamma(x)|
\bigl(e^{2\delta(x)|\gamma|}-1\bigr).
\]
\end{lemma}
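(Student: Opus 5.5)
The plan is to mimic the proof of Lemma~\ref{lem:polymer-perturbation}, but to work at the single point $x$ instead of along a segment. Set $\bar z_\gamma:=|z_\gamma(x)|$ and $a(\gamma):=2\delta(x)|\gamma|$. Because $\delta(x)\le 1/4$, we have $2\delta(x)\le 1$, and the verification used in Lemma~\ref{lem:polymer-perturbation} carries over unchanged:
\[
\sum_{\eta:\,\eta\cap\gamma\ne\varnothing}\bar z_\eta e^{a(\eta)}\le\sum_{v\in\gamma}\sum_{\eta\ni v}\bar z_\eta e^{|\eta|}\le\delta(x)|\gamma|\le a(\gamma).
\]
Hence Lemma~\ref{lem:KP} applies on the polydisc $D(\bar z)$. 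In particular $\Xi(z(x))\ne 0$, the cluster series \eqref{eq:cluster-log} converges absolutely on $D(\bar z)$, and the rooted estimate \eqref{eq:rooted-cluster} holds with these radii.

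Next I would write $\log\Xi(z(x))=L(z(x))$, where $L(w)$ is the cluster series. Since $\cP$ is finite, $\Xi(w)$ is a polynomial in the finitely many variables $w_\gamma$. It is nonzero at $z(x)$, so $\log\Xi$ is holomorphic near $z(x)$ for a local branch. Composing with the $C^1$ map $z(\cdot)$ gives
\[
\frac{\dd}{\dd x}\log\Xi(z(x))=\sum_{\gamma\in\cP}\frac{\partial\log\Xi}{\partial w_\gamma}(z(x))\,z'_\gamma(x).
\]
The result is branch-independent, since it equals $\frac{\dd}{\dd x}\Xi/\Xi$. The partial derivatives are the term-by-term derivatives of \eqref{eq:cluster-log}, exactly as in the proof of Lemma~\ref{lem:polymer-perturbation}. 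The point $z(x)$ lies in $D(\bar z)$, since it sits on the boundary with $|w_\gamma|=\bar z_\gamma$. So \eqref{eq:derivative-bound}, derived from \eqref{eq:rooted-cluster} after subtracting the $k=1$ term, gives
\[
\Bigl|\frac{\partial\log\Xi}{\partial w_\gamma}(z(x))-1\Bigr|\le e^{2\delta(x)|\gamma|}-1 .
\]
Subtracting $\sum_\gamma z'_\gamma(x)$ and applying the triangle inequality then yields the claimed bound.

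The main obstacle is a minor technical one: justifying that the derivative of the cluster series equals the derivative of $\log\Xi$ exactly at the point $z(x)$, which lies on the boundary of the polydisc where convergence is guaranteed. I would handle this by not differentiating the series at all. Instead I would use $\partial_\gamma\log\Xi=\partial_\gamma\Xi/\Xi$ directly, and identify it with the rooted cluster sum. The identification works by applying Lemma~\ref{lem:KP} with radii slightly enlarged, $\bar z_\gamma(1+\epsilon)$. This is possible if the hypothesis of Lemma~\ref{lem:KP} still holds, which requires $\delta(x)(1+\epsilon)\le1/2$; that is true for small $\epsilon$ since $\delta(x)\le1/4$. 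Then $z(x)$ is an interior point, and term-by-term differentiation is legitimate there. Finally, the bound \eqref{eq:rooted-cluster} with the enlarged radii gives $e^{(1+\epsilon)\cdot2\delta|\gamma|}$, and letting $\epsilon\to0$ recovers the stated estimate.
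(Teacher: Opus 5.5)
Your proposal is correct and is essentially the paper's proof: take $\bar z_\gamma=|z_\gamma(x)|$ (the case $z=z'=z(x)$ of Lemma~\ref{lem:polymer-perturbation}), apply the chain rule in the finitely many variables, and use \eqref{eq:derivative-bound} at the point $z(x)$ together with the triangle inequality. Your extra boundary argument is not needed, since Lemma~\ref{lem:KP} as stated already allows term-by-term differentiation on the closed polydisc $D(\bar z)$; and as written it has a small slip: coordinates with $z_\gamma(x)=0$ still have radius $0$ after scaling by $1+\epsilon$, so $z(x)$ is not an interior point in those coordinates. Using radii $(1+\epsilon)\bar z_\gamma+\epsilon'$ with small $\epsilon'>0$, which keeps the hypothesis of Lemma~\ref{lem:KP} valid because $\cP$ is finite, repairs it.
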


\begin{proof}
Since the polymer set is finite, the ordinary chain rule and \eqref{eq:derivative-bound} give
\begin{align*}
\left|
\frac{\dd}{\dd x}\log\Xi(z(x))-\sum_{\gamma\in\cP}z'_\gamma(x)
\right|
&=
\left|
\sum_{\gamma\in\cP}z'_\gamma(x)
\left(
\frac{\partial}{\partial z_\gamma}\log\Xi(z(x))-1
\right)
\right|\\
&\le
\sum_{\gamma\in\cP}|z'_\gamma(x)|
\bigl(e^{2\delta(x)|\gamma|}-1\bigr).
\end{align*}
\end{proof}

The following is Proposition~2.4 of Scott and Sokal \cite{ScottSokal}. See also Fern\'andez and Procacci \cite{FernandezProcacci}.

\begin{lemma}[Scott--Sokal tree bound \cite{ScottSokal}]\label{lem:tree-bound}
If $J$ is a graph and $a_e\in[0,1]$ for $e\in E(J)$, then
\[
\left|
\sum_{\substack{A\subseteq E(J)\\(V(J),~A)\text{ connected}}}
(-1)^{|A|}\prod_{e\in A}a_e
\right|
\le
\sum_{\substack{T\subseteq E(J)\\(V(J),~T)\text{ a tree}}}
\prod_{e\in T}a_e.
\]
Here $(V(J),A)$ is the spanning graph with the original vertex set and edge set $A$. Scott and Sokal state the result for complex edge weights $z_e$ satisfying $|1+z_e|\le1$. Taking $z_e=-a_e$ gives the displayed form.
\end{lemma}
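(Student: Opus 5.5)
We prove the tree bound by induction on the number of edges of $J$, using deletion--contraction. Since both sides are invariant under the choice of weights on loops (which never lie in a spanning tree and whose presence only multiplies the left side by $1-a_e$), it is convenient to work in the class of multigraphs with loops. Define
\[
Z_J:=\sum_{\substack{A\subseteq E(J)\\ (V(J),A)\text{ connected}}}(-1)^{|A|}\prod_{e\in A}a_e,
\qquad
T_J:=\sum_{\substack{T\subseteq E(J)\\ (V(J),T)\text{ a tree}}}\prod_{e\in T}a_e .
\]
The claim is $|Z_J|\le T_J$.

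\textbf{Base cases.} If $J$ is disconnected, both sides vanish. If $J$ has one vertex, then $Z_J=\prod_{\text{loops}}(1-a_e)\in[0,1]$, while $T_J=1$ (the empty tree), so the bound holds.

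\textbf{Inductive step.} Pick a non-loop edge $e=uv$. Splitting according to whether $e\in A$ gives the deletion--contraction identity
\[
Z_J=Z_{J-e}-a_e\,Z_{J/e}.
\]
Indeed, if $e\in A$, then $A$ is connected on $V(J)$ exactly when $A\setminus\{e\}$ is connected on $V(J/e)$. Edges parallel to $e$ become loops in $J/e$, which is why loops must be allowed in the induction class.

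Spanning trees satisfy the analogous identity with a plus sign:
\[
T_J=T_{J-e}+a_e\,T_{J/e}.
\]
Trees avoiding $e$ are the spanning trees of $J-e$. Trees containing $e$ correspond bijectively to spanning trees of $J/e$, because contracting $e$ turns parallel edges into loops, and loops are never used by a tree.

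Applying the triangle inequality and then induction (both $J-e$ and $J/e$ have fewer edges) gives
\[
|Z_J|\le|Z_{J-e}|+a_e|Z_{J/e}|\le T_{J-e}+a_eT_{J/e}=T_J .
\]

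\textbf{Main obstacle.} The only delicate point is the bookkeeping of loops and multiedges created by contraction. It is resolved by proving the statement for multigraphs with loops from the start. The constraint $a_e\in[0,1]$ is then needed only in the one-vertex base case, where it gives $|1-a_e|\le1$.
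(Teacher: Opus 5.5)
Your proof is correct, and it takes a different route from the paper. The paper gives no argument of its own: it imports Scott and Sokal's bound for complex weights with $|1+z_e|\le1$ and specializes to $z_e=-a_e$. The standard proof of that cited bound goes through Penrose's partition scheme, which writes the connected sum exactly as $\sum_T\prod_{e\in T}(-a_e)\prod_{e\in P(T)}(1-a_e)$ over spanning trees $T$, for suitable edge sets $P(T)$.

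Your deletion--contraction induction over multigraphs with loops is self-contained and elementary:
\begin{itemize}
\item Both identities $Z_J=Z_{J-e}-a_eZ_{J/e}$ and $T_J=T_{J-e}+a_eT_{J/e}$ are right.
\item The induction on $|E(J)|$ is well founded, since every connected $J$ with at least two vertices has a non-loop edge.
\item The argument transfers verbatim to the cited complex setting: use $|z_e|$ in the triangle inequality and $|1+z_\ell|\le1$ at the one-vertex base. So it recovers the full Scott--Sokal statement, not just the real specialization.
\item For real $a_e\in[0,1]$ you get slightly more. By the same induction, $Z_{J-e}$ and $-a_eZ_{J/e}$ both have sign $(-1)^{v(J)-1}$, so your triangle inequality is in fact an equality and $(-1)^{v(J)-1}Z_J\ge0$.
\end{itemize}
The citation buys brevity and, through Penrose's scheme, an exact tree identity. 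Your route buys independence from the literature.

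Two cosmetic slips:
\begin{itemize}
\item Your opening claim that both sides are ``invariant'' under loop weights is inaccurate. The left side is multiplied by $1-a_e$, as your own parenthesis says; the remark is not used, so nothing breaks.
\item The hypothesis $a_e\ge0$ is needed in the inductive step as well, to write $|a_eZ_{J/e}|=a_e|Z_{J/e}|$. It is not used only in the one-vertex base case.
\end{itemize}
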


\subsection{Graphon switching estimates}

A graphon is a symmetric measurable function $W:[0,1]^2\to[0,1]$. Define $U=1-W$ and
\begin{equation}\label{eq:q-rho}
q=\int_{[0,1]^2}U(x,y)\dd x\dd y,
\qquad
\rho=\sup_{x\in[0,1]}\int_0^1U(x,y)\dd y.
\end{equation}
Changing a graphon on a null set does not alter any density. We therefore choose an equivalent representative for which every row integral is defined and bounded by the essential supremum of the original graphon, so the ordinary supremum in \eqref{eq:q-rho} is valid. Since $q$ is the average row integral, $q\le\rho$.

For a graph $F$, write
\[
t_F(W)=
\int_{[0,1]^{V(F)}}
\prod_{ab\in E(F)}W(x_a,x_b)
\prod_{a\in V(F)}\dd x_a.
\]
Fix a graph $H$, and set $h=v(H)$ and $m=e(H)$. We apply the preceding polymer model with
\[
X=V(H),
\qquad
\cP=\{S\subseteq V(H): |S|\ge2\}.
\]
For $S\subseteq V(H)$ with $|S|\ge2$, define
\[
t_{(S,A)}(U)=
\int_{[0,1]^S}
\prod_{ab\in A}U(x_a,x_b)
\prod_{a\in S}\dd x_a,
\quad
z_{H,U}(S)=
\sum_{\substack{A\subseteq E(H[S])\\(S,~A)\text{ connected}}}
(-1)^{|A|}t_{(S,A)}(U).
\]

\begin{lemma}\label{lem:graphon-polymer}
For every graph $H$,
\begin{equation}\label{eq:graphon-polymer}
t_H(W)=
\sum_{\Gamma\text{ pairwise compatible}}
\left(\prod_{S\in\Gamma}z_{H,U}(S)\right).
\end{equation}
\end{lemma}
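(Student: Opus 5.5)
Substituting $W=1-U$ on every edge of $H$ and expanding the product gives
\[
\prod_{ab\in E(H)}W(x_a,x_b)=\sum_{B\subseteq E(H)}(-1)^{|B|}\prod_{ab\in B}U(x_a,x_b),
\]
so $t_H(W)=\sum_{B\subseteq E(H)}(-1)^{|B|}\int_{[0,1]^{V(H)}}\prod_{ab\in B}U(x_a,x_b)\prod_a\dd x_a$. This is a finite sum of integrals of bounded measurable functions, so interchanging the sum and the integral needs no justification.

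Next, group the edge sets $B$ according to the connected components of the spanning graph $(V(H),B)$. Let $S_1,\dots,S_s$ be the vertex sets of the components that have at least two vertices; the remaining components are isolated vertices. Each edge of $B$ lies inside exactly one $S_i$, and $B\cap E(H[S_i])$ is an edge set $A_i$ for which $(S_i,A_i)$ is connected. Conversely, given a family $\Gamma=\{S_1,\dots,S_s\}$ of pairwise disjoint subsets with $|S_i|\ge2$, and for each $i$ a set $A_i\subseteq E(H[S_i])$ with $(S_i,A_i)$ connected, the union $B=\bigcup_i A_i$ is an edge set whose nontrivial components are exactly the $S_i$. This gives a bijection between edge subsets $B$ and pairs consisting of a pairwise compatible family $\Gamma\subseteq\cP$ together with a choice of such connected $A_S$ for each $S\in\Gamma$. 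Since the $S_i$ are disjoint, every edge of $H$ between different $S_i$, or touching a vertex outside them, is simply absent from $B$. No constraint arises from edges of $H$ joining two blocks, because we sum over all $B$ and not only over induced structures. This point is where care is needed, and it works because the definition of $z_{H,U}(S)$ uses arbitrary connected $A\subseteq E(H[S])$.

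Finally, factor the integral. For a fixed $B$ with decomposition $(\Gamma,(A_S))$, the integrand $\prod_{ab\in B}U$ is a product of functions of the disjoint variable blocks $(x_a)_{a\in S}$. Vertices in no $S$ contribute a factor $\int_0^1\dd x_a=1$. By Fubini the integral equals $\prod_{S\in\Gamma}t_{(S,A_S)}(U)$, and the sign splits as $(-1)^{|B|}=\prod_S(-1)^{|A_S|}$. Summing over the independent choices of the $A_S$ for fixed $\Gamma$ gives $\prod_{S\in\Gamma}z_{H,U}(S)$. Summing over $\Gamma$, where the empty family corresponds to $B=\varnothing$ and contributes $1$, yields \eqref{eq:graphon-polymer}. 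The only step that requires checking is the bijection in the previous paragraph; everything else is routine.
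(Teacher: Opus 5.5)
Your proof is correct and follows essentially the same route as the paper. Both arguments expand $W=1-U$, group edge sets $B\subseteq E(H)$ by the vertex sets of their nontrivial components, which gives a bijection with pairwise compatible families carrying a connected spanning edge set on each block, and then factor each integral with Fubini before summing over the independent choices of the $A_S$.
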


\begin{proof}
Expanding every factor $W=1-U$ gives
\begin{equation}\label{eq:graphon-IE}
t_H(W)=
\sum_{A\subseteq E(H)}(-1)^{|A|}
\int_{[0,1]^{V(H)}}
\prod_{ab\in A}U(x_a,x_b)
\prod_{a\in V(H)}\dd x_a.
\end{equation}
Fix $A\subseteq E(H)$. Let $S_1,\ldots,S_\ell$ be the vertex sets of the connected components of $(V(H),A)$ that contain an edge, and put $A_j=A\cap\binom{S_j}{2}$. Then
\[
A=A_1\dotcup\cdots\dotcup A_\ell,
\qquad
(-1)^{|A|}=\prod_{j=1}^\ell(-1)^{|A_j|},
\]
and Fubini's theorem gives
\[
\int_{[0,1]^{V(H)}}
\prod_{ab\in A}U(x_a,x_b)
\prod_a\dd x_a
=
\prod_{j=1}^\ell t_{(S_j,A_j)}(U).
\]
Conversely, pairwise disjoint sets $S_1,\ldots,S_\ell$ together with connected edge sets $A_j\subseteq E(H[S_j])$ determine the unique edge set $A=\dot\bigcup_jA_j$ whose nontrivial connected components are $(S_j,A_j)$. Substituting this bijection into \eqref{eq:graphon-IE} and summing first over every connected $A_j$ gives \eqref{eq:graphon-polymer}.
\end{proof}

The following estimates convert connected activities into geometric series.

\begin{lemma}\label{lem:tree-integrals}
Let $T$ be a tree on a vertex set $S$ of size $s\ge2$. Then
\begin{equation}\label{eq:tree-unrooted}
t_T(U)\le q\rho^{s-2}.
\end{equation}
For a fixed vertex $r\in S$ and $x\in[0,1]$,
\begin{equation}\label{eq:tree-one-root}
\int_{[0,1]^{S\setminus\{r\}}}
\prod_{ab\in E(T)}U(x_a,x_b)
\prod_{a\ne r}\dd x_a
\le\rho^{s-1},
\qquad x_r=x.
\end{equation}
For two fixed vertices $r_1,r_2\in S$ at tree-distance at least two and $x,y\in[0,1]$,
\begin{equation}\label{eq:tree-two-roots}
\int_{[0,1]^{S\setminus\{r_1,r_2\}}}
\prod_{ab\in E(T)}U(x_a,x_b)
\prod_{a\notin\{r_1,r_2\}}\dd x_a
\le\rho^{s-2},
\qquad x_{r_1}=x,\ x_{r_2}=y.
\end{equation}
\end{lemma}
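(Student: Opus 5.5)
We argue by leaf-stripping induction on $s$, integrating out one leaf at a time; each integration uses only that $U$ takes values in $[0,1]$ and the row bound $\int_0^1U(x,y)\dd y\le\rho$. The basic step is this. Let $\ell$ be a leaf of $T$ with neighbour $p$. Then $x_\ell$ appears in the integrand only through the factor $U(x_p,x_\ell)$. By Fubini, integrating $x_\ell$ first gives a factor $\int_0^1U(x_p,y)\dd y\le\rho$, uniformly in $x_p$. Since every other factor is nonnegative, the integral over $T$ is at most $\rho$ times the same integral over $T-\ell$, which is a tree on $s-1$ vertices.

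For \eqref{eq:tree-one-root}, we induct on $s$. When $s=2$, the tree is a single edge $ru$, and the integral is $\int U(x,x_u)\dd x_u\le\rho$. When $s\ge3$, the tree has at least two leaves, so some leaf $\ell\ne r$ exists. Stripping $\ell$ costs a factor $\rho$ and leaves a tree on $S\setminus\{\ell\}$ that still contains $r$. By induction the remaining integral is at most $\rho^{s-2}$, so the total is at most $\rho^{s-1}$.

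For \eqref{eq:tree-unrooted}, pick any edge $uv$ of $T$. First integrate out all vertices other than $u,v$ by repeatedly stripping leaves distinct from $u$ and $v$. This is possible as long as more than two vertices remain: a tree with at least three vertices has at least two leaves, and if the only leaves were $u$ and $v$, the tree would be a path from $u$ to $v$, which with the edge $uv$ forces exactly two vertices. Each of the $s-2$ strips costs a factor $\rho$, uniformly in $(x_u,x_v)$. What remains is $\int U(x_u,x_v)\dd x_u\dd x_v=q$, giving $q\rho^{s-2}$.

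For \eqref{eq:tree-two-roots}, strip leaves different from $r_1,r_2$ until only the path $P$ from $r_1$ to $r_2$ remains. This works for the same reason: any tree strictly larger than this path has a leaf outside $\{r_1,r_2\}$. Each strip costs $\rho$, and there are $s-|P|$ of them. Now $P=r_1w_1\cdots w_kr_2$ with $k\ge1$, since the tree-distance is at least two. Integrate $w_k,\dots,w_2$ in turn. Each is a path endpoint once its successor factor is bounded: bound $U(x_{w_k},y)\le1$, then integrate $x_{w_k}$ against $U(x_{w_{k-1}},x_{w_k})$ to get $\le\rho$, and so on. Finally $\int U(x,x_{w_1})\dd x_{w_1}\le\rho$ after bounding the last factor by $1$. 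In total the path contributes at most $\rho^{k}$, having discarded one edge factor by the trivial bound $U\le1$. Since $k=|P|-2$, the overall bound is $\rho^{s-|P|}\rho^{|P|-2}=\rho^{s-2}$.

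The only delicate point is measurability and the use of the pointwise supremum $\rho$. This is handled by the paper's choice of representative, for which every row integral is at most $\rho$. Tonelli then justifies each iterated integration of nonnegative functions. I expect no step to be a real obstacle; the one to check carefully is the combinatorial claim that a leaf outside the protected set always exists.
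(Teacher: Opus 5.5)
Your proposal is correct and follows essentially the same route as the paper. Leaf-stripping costs a factor $\rho$ per integration, a retained edge contributes $q$ in the unrooted case, and in the two-root case you reduce to the $r_1r_2$-path and bound its kernel convolution by $\rho^{\ell-1}$ by discarding one edge factor via $U\le1$, exactly as the paper's inductive bound on $U^{(\ell)}$ does; your explicit check that a leaf outside the protected set always exists fills in a step the paper leaves implicit.
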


\begin{proof}
For \eqref{eq:tree-unrooted}, retain one edge of $T$ and successively integrate all other vertices from the leaves inward. Each such integration contributes at most $\rho$, and the final retained edge contributes
\[
\int_{[0,1]^2}U(x,y)\dd x\dd y=q.
\]
This gives $q\rho^{s-2}$.

For \eqref{eq:tree-one-root}, root $T$ at $r$ and integrate the other vertices from the leaves toward the root. Each of the $s-1$ integrations contributes at most $\rho$.

For \eqref{eq:tree-two-roots}, first integrate every branch off the unique $r_1r_2$-path. Suppose that path has length $\ell\ge2$. The remaining path integral is the kernel convolution $U^{(\ell)}(x,y)$. Since $0\le U\le1$ and $U$ is symmetric,
\[
U^{(2)}(x,y)=\int_0^1U(x,z)U(z,y)\dd z
\le\int_0^1U(x,z)\dd z\le\rho.
\]
If $\ell\ge3$, induction gives
\[
U^{(\ell)}(x,y)
=\int_0^1U^{(\ell-1)}(x,z)U(z,y)\dd z
\le\rho^{\ell-2}\int_0^1U(z,y)\dd z
\le\rho^{\ell-1}.
\]
The branches attached to this path contribute the remaining factors of $\rho$, for a total of $\rho^{s-2}$.
\end{proof}

\begin{lemma}\label{lem:tree-counts}
  Let $H$ be a graph with $h$ vertices, $m$ edges, and maximum degree $\Delta$. For $s\ge 2$,
  consider pairs $(S,T)$ where $S\subseteq V(H)$ is an $s$-vertex subset and $T$ is a spanning tree of $H[S]$.
\begin{enumerate}
\item If a fixed vertex must belong to $S$, the number of pairs $(S,T)$ is at most $(4\Delta)^{s-1}$.
\item If $s\ge3$, the total number of pairs $(S,T)$ is at most $8m(4\Delta)^{s-2}$.
\item If two fixed vertices must belong to $S$, the number of pairs is at most
\[
\binom{h-2}{s-2}s^{s-2}\le(3eh)^{s-2}.
\]
\end{enumerate}
\end{lemma}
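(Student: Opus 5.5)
The plan is to encode each pair $(S,T)$ by its tree $T$, which determines $S=V(T)$. In (1) and (2) I would count rooted subtrees of $H$ through a depth-first encoding. In (3) I would use a cruder count: choose $S$, then bound the spanning trees by Cayley's formula.

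\emph{Part 1.} Let $v$ be the fixed vertex and root $T$ at $v$. A depth-first traversal of $T$ starting at $v$ crosses each of the $s-1$ edges twice, so it is a closed walk of length $2(s-1)$. Record this walk as a word in $\{\text{down},\text{up}\}^{2(s-1)}$, which gives at most $2^{2(s-1)}=4^{s-1}$ possibilities. In addition, at each of the $s-1$ ``down'' steps, record which neighbour of the current vertex is entered; there are at most $\Delta$ choices each time. The word together with these neighbour labels determines the walk, hence $T$, hence $S$. So the number of pairs is at most $4^{s-1}\Delta^{s-1}=(4\Delta)^{s-1}$.

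\emph{Part 2.} Every tree $T$ with $s\ge 3$ vertices contains an edge $e=uw$. Count triples consisting of an oriented edge $(u,w)$ of $H$ and a tree $T\ni uw$; there are $2m$ choices of $(u,w)$. Run the same depth-first encoding from $u$, but take $w$ as the first child explored. The first down step is then forced, and its neighbour label is forced too. The remaining $s-2$ down steps cost at most $\Delta$ each. The up/down word still has at most $4^{s-1}$ possibilities, so the total is at most $2m\cdot 4^{s-1}\Delta^{s-2}=8m(4\Delta)^{s-2}$. Each pair $(S,T)$ is counted at least once, and this overcounting is harmless for an upper bound.

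\emph{Part 3.} Choose the $s-2$ remaining vertices of $S$ in $\binom{h-2}{s-2}$ ways. For each such $S$, the number of spanning trees of $H[S]$ is at most the number of spanning trees of $K_s$, which is $s^{s-2}$ by Cayley's formula. For the second inequality, use $\binom{h-2}{s-2}\le (eh/(s-2))^{s-2}$ together with $s\le 3(s-2)$ for $s\ge3$. This gives
\[
\binom{h-2}{s-2}s^{s-2}\le \left(\frac{e h s}{s-2}\right)^{s-2}\le (3eh)^{s-2}.
\]
For $s=2$ both sides equal $1$.

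The main obstacle is making the depth-first encoding injective: the word must genuinely determine the walk, and each neighbour must be chosen among at most $\Delta$ options. In part 2, the risk is that forcing the first child still leaves the remaining word with at most $4^{s-1}$ possibilities rather than more; if that fails, the fallback is to absorb the slack into the constant $8$.
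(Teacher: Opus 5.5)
Your proposal is correct and takes essentially the same route as the paper. Your up/down words are the Dyck-word encoding of the rooted plane-tree shapes that the paper bounds by $C_{s-1}<4^{s-1}$, and forcing $w$ as the first child in part 2 is equivalent to the paper's Catalan convolution, while part 3 is identical; your worry in part 2 is unfounded, since words of length $2(s-1)$ over two letters number at most $4^{s-1}$ regardless.
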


\begin{proof}
For the first assertion, root $T$ at the prescribed vertex and order the children of every vertex. The number of rooted plane-tree shapes with $s$ vertices is the Catalan number $C_{s-1}<4^{s-1}$. Embed a fixed shape in an order in which each parent precedes its children. Once the root is fixed, every nonroot vertex has at most $\Delta$ possible images, since it must be mapped to a neighbor of its parent's image. Ignoring injectivity and multiple encodings only enlarges the count, so the total is at most $(4\Delta)^{s-1}$.

For the second assertion, distinguish and orient one edge of $T$. There are at most $2m$ choices for the image of this oriented edge in $H$. Deleting it leaves an ordered pair of rooted tree components. After ordering the children in both components, the Catalan convolution gives
\[
\sum_{a=1}^{s-1}C_{a-1}C_{s-a-1}=C_{s-1}\le4^{s-1}
\]
possible pairs of plane-tree shapes. Once the two roots are fixed by the oriented edge, each of the remaining $s-2$ vertices has at most $\Delta$ choices when its parent has already been embedded. Thus the number of distinguished encodings is at most
\[
2m\,4^{s-1}\Delta^{s-2}=8m(4\Delta)^{s-2}.
\]
Every desired pair has at least one such encoding, which proves the assertion.

For the third assertion, choose the other $s-2$ vertices of $S$ in $\binom{h-2}{s-2}$ ways. The number of trees on the labeled set $S$ is $s^{s-2}$ by Cayley's formula, so this also bounds the number of spanning trees of $H[S]$. Finally,
\[
\binom{h-2}{s-2}s^{s-2}
\le
\left(\frac{e(h-2)}{s-2}\right)^{s-2}s^{s-2}
\le(3eh)^{s-2},
\]
because $s/(s-2)\le3$ for $s\ge3$. The case $s=2$ is immediate.
\end{proof}

\begin{lemma}\label{lem:activity-bounds}
There are absolute constants $c_a,C_a>0$ such that, if $h\rho\le c_a$, then
\begin{align}
\max_{v\in V(H)}\sum_{S\ni v}|z_{H,U}(S)|e^{|S|}
&\le C_ah\rho,\label{eq:activity-local}\\
\sum_{|S|\ge3}|z_{H,U}(S)|e^{|S|}
&\le C_amq h\rho.\label{eq:activity-high}
\end{align}
Moreover, for each edge $uv\in E(H)$,
\begin{equation}\label{eq:activity-edge}
\sum_{\substack{S\supseteq\{u,v\}\\|S|\ge3}}
\bigl(|z_{H,U}(S)|+|z_{H-uv,U}(S)|\bigr)e^{|S|}
\le C_a q h\rho.
\end{equation}
\end{lemma}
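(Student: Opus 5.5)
We bound each activity by a sum over spanning trees, integrate that tree sum with Lemma~\ref{lem:tree-integrals}, and count the trees with Lemma~\ref{lem:tree-counts}.

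\textbf{Reduction to trees.} We apply Lemma~\ref{lem:tree-bound} inside the integral. Write $t_{(S,A)}(U)$ as an integral over $x\in[0,1]^S$ and exchange the sum over $A$ with the integral. For fixed $x$ the edge weights $a_{ab}=U(x_a,x_b)$ lie in $[0,1]$, so
\[
|z_{H,U}(S)|\le\int_{[0,1]^S}\Bigl|\sum_{A\text{ conn.}}(-1)^{|A|}\prod_{ab\in A}U(x_a,x_b)\Bigr|\prod_{a\in S}\dd x_a\le\sum_{T\in\cT(H[S])}t_T(U).
\]
The same bound holds for $H-uv$, using the spanning trees of $(H-uv)[S]$, which are a subset of those of $H[S]$. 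Any spanning tree of $(H-uv)[S]$ is therefore also a spanning tree of $H[S]$ and is covered by the counts below.

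\textbf{Proof of \eqref{eq:activity-local}.} Fix $v$. By \eqref{eq:tree-one-root}, integrating with $x_v$ fixed and then over $x_v$, each tree on $s$ vertices contributes at most $\rho^{s-1}$. Lemma~\ref{lem:tree-counts}(1) gives at most $(4\Delta)^{s-1}$ pairs $(S,T)$ with $v\in S$. Since $\Delta\le h$, this yields
\[
\sum_{S\ni v}|z_{H,U}(S)|e^{|S|}\le\sum_{s\ge2}e^s(4h\rho)^{s-1}\le e\sum_{j\ge1}(4eh\rho)^j\le C_a h\rho,
\]
provided $4eh\rho\le1/2$, which we arrange by choosing $c_a$ small.

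\textbf{Proof of \eqref{eq:activity-high}.} By \eqref{eq:tree-unrooted}, each tree contributes at most $q\rho^{s-2}$. Lemma~\ref{lem:tree-counts}(2) gives at most $8m(4\Delta)^{s-2}$ pairs for $s\ge3$. Hence
\[
\sum_{|S|\ge3}|z_{H,U}(S)|e^{|S|}\le 8mq\sum_{s\ge3}e^s(4h\rho)^{s-2}\le Cmq\,h\rho,
\]
where the geometric series starts at $(4h\rho)^1$ and converges because $h\rho\le c_a$.

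\textbf{Proof of \eqref{eq:activity-edge}.} This is the main obstacle. We need a factor $q$ and only one factor $h\rho$, but a tree containing $u$ and $v$ need not contain the edge $uv$. Split the trees on $S\supseteq\{u,v\}$, $s\ge3$, by the tree-distance between $u$ and $v$.
\begin{itemize}
\item[(i)] \emph{Distance one.} Keep the edge $uv$ as the retained edge in the proof of \eqref{eq:tree-unrooted}, so the contribution is at most $q\rho^{s-2}$. The count is at most $(4\Delta)^{s-2}$ times a constant: contract the edge $uv$ and apply the rooted-tree Catalan counting of Lemma~\ref{lem:tree-counts}(1) from the pair. This gives $\sum_{s\ge3}e^sq\,C(4h\rho)^{s-2}\le Cqh\rho$.
\item[(ii)] \emph{Distance at least two.} Pin $x_u=x$ and $x_v=y$. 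By \eqref{eq:tree-two-roots} the inner integral is at most $\rho^{s-2}$, but integrating over $x,y$ with no extra factor loses the $q$. Instead, refine the estimate: in the proof of \eqref{eq:tree-two-roots}, keep one $U$ factor unintegrated and bound only the others by $\rho$. Integrating the last edge against the full measure then gives $q\rho^{s-3}$ times $\int U\le$ the corresponding factor, i.e.\ the total integral $t_T(U)\le q\rho^{s-2}$ still holds by \eqref{eq:tree-unrooted}.
\end{itemize}

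For the counting in case (ii), Lemma~\ref{lem:tree-counts}(3) gives $(3eh)^{s-2}$ pairs with both $u,v$ prescribed. The sum is then
\[
\sum_{s\ge3}e^sq(3eh)^{s-2}\rho^{s-2}\le Cqh\rho
\]
when $h\rho$ is small, with the leading term at $s=3$ already carrying the single factor $h\rho$. Adding (i) and (ii), and using the same bound for $H-uv$, gives \eqref{eq:activity-edge}. The only genuine care needed is that case (ii) sums over all $S\supseteq\{u,v\}$ with no restriction to the $\Delta$-neighbourhood. This is exactly why the $h$-based count of Lemma~\ref{lem:tree-counts}(3) and the assumption $h\rho\le c_a$ are used, rather than $\Delta$.
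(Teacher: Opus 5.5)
Your proof is correct and follows the paper's route: bound $|z_{H,U}(S)|$ and $|z_{H-uv,U}(S)|$ by spanning-tree sums via Lemma~\ref{lem:tree-bound}, bound each tree integral by Lemma~\ref{lem:tree-integrals}, and count the trees with Lemma~\ref{lem:tree-counts}. For \eqref{eq:activity-edge} the case split by tree-distance is unnecessary, and the ``obstacle'' does not arise: \eqref{eq:tree-unrooted} gives $t_T(U)\le q\rho^{s-2}$ for every tree, and Lemma~\ref{lem:tree-counts}(3) already counts all trees on $S\supseteq\{u,v\}$ whatever the $u$--$v$ distance, so your case (ii) computation alone gives $e^2q\sum_{j\ge1}(3e^2h\rho)^j=O(qh\rho)$, which is exactly the paper's one-step argument.
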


\begin{proof}
For $|S|=s$, Lemma~\ref{lem:tree-bound} applied pointwise with $a_{ab}=U(x_a,x_b)$, followed by Lemma~\ref{lem:tree-integrals}, gives the complete chain
\begin{align*}
|z_{H,U}(S)|
&\le
\int_{[0,1]^S}
\left|
\sum_{\substack{A\subseteq E(H[S])\\(S,A)\text{ connected}}}
(-1)^{|A|}\prod_{ab\in A}U(x_a,x_b)
\right|
\prod_{a\in S}\dd x_a\\
&\le
\sum_{T\in\cT(H[S])}
\int_{[0,1]^S}
\prod_{ab\in E(T)}U(x_a,x_b)
\prod_{a\in S}\dd x_a\\
&\le |\cT(H[S])|q\rho^{s-2}.
\end{align*}
For a fixed vertex, the contribution of all sets of size $s$ to \eqref{eq:activity-local} is at most
\[
(4\Delta)^{s-1}q\rho^{s-2}e^s
=4e^2\Delta q(4e\Delta\rho)^{s-2}.
\]
Summing this expression over $s\ge2$, using $q\le\rho$ and $\Delta\le h$, proves \eqref{eq:activity-local}.

For \eqref{eq:activity-high}, the contribution of all sets of size $s\ge3$ is at most
\[
8mq(4\Delta)^{s-2}\rho^{s-2}e^s
=8e^2mq(4e\Delta\rho)^{s-2}.
\]
Summing over $s\ge3$ gives $O(mq\Delta\rho)=O(mqh\rho)$.

For \eqref{eq:activity-edge}, Lemma~\ref{lem:tree-counts}(3) gives, for each $s\ge3$ and for each of $H$ and $H-uv$,
\[
\sum_{\substack{S\supseteq\{u,v\}\\|S|=s}}|z_{H,U}(S)|e^s
\le e^2q(3e^2h\rho)^{s-2}.
\]
Summing this over $s\ge3$ for both graphs proves \eqref{eq:activity-edge} after choosing $c_a$ so that all three geometric ratios are at most $1/2$.
\end{proof} 

The preceding activity bounds give the two edge-switching estimates used below. We begin with the effect of deleting an edge of $H$.

\begin{theorem}\label{thm:source-edge-switch}
There are absolute constants $c,C>0$ such that, if $h\rho\le c$, then every $uv\in E(H)$ satisfies
\[
\left|
\log\frac{t_{H-uv}(W)}{t_H(W)}-q
\right|
\le Cqh\rho,
\qquad
\frac{t_{H-uv}(W)-t_H(W)}{t_H(W)}
=q\bigl(1+O(h\rho)\bigr).
\]
\end{theorem}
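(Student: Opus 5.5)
We write $t_H(W)=\Xi(z)$ and $t_{H-uv}(W)=\Xi(z')$ by Lemma~\ref{lem:graphon-polymer}, where $z=z_{H,U}$ and $z'=z_{H-uv,U}$ are activity systems on the same ground set $X=V(H)$ and the same polymer set $\cP=\{S:|S|\ge2\}$. The plan is to apply Lemma~\ref{lem:polymer-perturbation} to this pair, bounding $\delta$ and the error sum with Lemma~\ref{lem:activity-bounds}.

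\textbf{Localizing the difference.} If $S\not\supseteq\{u,v\}$, then $H[S]=(H-uv)[S]$, so $z_S=z'_S$. For $S=\{u,v\}$ the only connected spanning edge set in $H[S]$ is $\{uv\}$, giving $z_{\{u,v\}}=-q$. In $H-uv$ the set $\{u,v\}$ has no edges, so $z'_{\{u,v\}}=0$. Hence
\[
\sum_{\gamma}(z'_\gamma-z_\gamma)=q+\sum_{\substack{S\supseteq\{u,v\}\\|S|\ge3}}(z'_S-z_S),
\]
and by \eqref{eq:activity-edge} the last sum has modulus at most $C_aqh\rho$.

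\textbf{Bounding $\delta$.} By \eqref{eq:activity-local}, applied to both $H$ and $H-uv$ (each has at most $h$ vertices), we get $\delta\le 2C_ah\rho$. Choosing $c\le c_a$ small enough gives $\delta\le1/4$ and $2\delta|\gamma|\le1$ for $|\gamma|\le$ a fixed size. Lemma~\ref{lem:polymer-perturbation} then shows both partition functions are nonzero. (They are in fact positive reals, so $\log$ is the real logarithm up to the branch chosen along the segment; the segment argument gives the principal value consistently.) It also gives
\[
\Bigl|\log\frac{t_{H-uv}}{t_H}-\sum_\gamma(z'_\gamma-z_\gamma)\Bigr|\le\sum_\gamma|z'_\gamma-z_\gamma|\bigl(e^{2\delta|\gamma|}-1\bigr).
\]

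\textbf{Bounding the error term.} The error sum runs only over $S\supseteq\{u,v\}$.
\begin{itemize}
\item For $S=\{u,v\}$, the contribution is $q(e^{4\delta}-1)=O(qh\rho)$.
\item For $|S|\ge3$, use $e^{2\delta|S|}-1\le e^{|S|/2}\le e^{|S|}$ (since $2\delta\le1/2$). The contribution is then at most $\sum_{S\supseteq\{u,v\},|S|\ge3}(|z_S|+|z'_S|)e^{|S|}\le C_aqh\rho$, again by \eqref{eq:activity-edge}.
\end{itemize}
Combining, $|\log(t_{H-uv}/t_H)-q|\le Cqh\rho$.

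\textbf{Second form.} Exponentiate: the ratio is $e^{q+O(qh\rho)}$. Since $q\le\rho\le c/h$, the exponent is at most a small absolute constant. Hence $e^x-1=x(1+O(x))$, which gives $q(1+O(h\rho)+O(q))=q(1+O(h\rho))$, using $q\le\rho\le h\rho$.

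\textbf{Main obstacle.} The delicate point is the size-$|S|\ge3$ error terms. A naive bound via \eqref{eq:activity-high} would lose a factor $m$. One must therefore use the edge-localized estimate \eqref{eq:activity-edge}, which depends on the observation that only polymers containing both $u$ and $v$ change.
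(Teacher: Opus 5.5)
Your proposal is correct and follows the paper's proof essentially step for step: you localize the activity difference to polymers containing $\{u,v\}$, isolate the two-vertex term $q$, bound $\delta$ via \eqref{eq:activity-local}, control both the first-order remainder and the nonlinear error through the edge-localized estimate \eqref{eq:activity-edge}, and exponentiate using $q\le\rho\le h\rho$. Your parenthetical about branches is unnecessary: the activities are real, so the convergent cluster series is itself the real logarithm of the positive quantity $t_H(W)$.
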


\begin{proof}
Set $z_S=z_{H,U}(S)$ and $z'_S=z_{H-uv,U}(S)$. Then $z_S=z'_S$ unless $S\supseteq\{u,v\}$. For $S=\{u,v\}$, $z_S=-q$ and $z'_S=0$. Hence
\[
\sum_S(z'_S-z_S)
=q+
\sum_{\substack{S\supseteq\{u,v\}\\|S|\ge3}}(z'_S-z_S),
\]
and \eqref{eq:activity-edge} gives
\begin{equation}\label{eq:edge-activity-difference}
\sum_{\substack{S\supseteq\{u,v\}\\|S|\ge3}}|z'_S-z_S|e^{|S|}
\le Cqh\rho.
\end{equation}
By \eqref{eq:activity-local}, the local norm in Lemma~\ref{lem:polymer-perturbation} satisfies $\delta\le Ch\rho\le1/4$. Therefore
\begin{align*}
\left|
\log\frac{t_{H-uv}(W)}{t_H(W)}-q
\right|
&\le
\sum_{\substack{S\supseteq\{u,v\}\\|S|\ge3}}|z'_S-z_S|
+q(e^{4\delta}-1)\\
&\quad+
\sum_{\substack{S\supseteq\{u,v\}\\|S|\ge3}}|z'_S-z_S|
\bigl(e^{2\delta|S|}-1\bigr)\\
&\le Cqh\rho.
\end{align*}
Here the last line uses \eqref{eq:edge-activity-difference}, $e^{2\delta|S|}-1\le e^{|S|}$, and $e^{4\delta}-1=O(h\rho)$. This proves the logarithmic estimate. Since $q\le\rho$ and $h\ge2$,
\[
e^{q+O(qh\rho)}-1
=q+O(qh\rho)+O(q^2)
=q\bigl(1+O(h\rho)\bigr),
\]
which proves the second assertion.
\end{proof}

We also record the corresponding estimate when the images of the endpoints of an edge are prescribed.

Fix an oriented edge $(u,v)$ of $H$. For $x,y\in[0,1]$, define
\begin{equation}\label{eq:Psi}
\Psi^W_{uv}(x,y)=
\int_{[0,1]^{V(H)\setminus\{u,v\}}}
\prod_{ab\in E(H)\setminus\{uv\}}W(x_a,x_b)
\prod_{a\notin\{u,v\}}\dd x_a,
\qquad x_u=x,\ x_v=y.
\end{equation}
Thus $\Psi^W_{uv}(x,y)$ is the homomorphism density of $H-uv$ with the images of $u$ and $v$ fixed at $x$ and $y$.

\begin{theorem}\label{thm:fixed-endpoint}
There are absolute constants $c,C>0$ such that, if $h\rho\le c$, then for every $x,y\in[0,1]$,
\[
e^{-Ch\rho}t_H(W)
\le\Psi^W_{uv}(x,y)
\le e^{Ch\rho}t_H(W).
\]
\end{theorem}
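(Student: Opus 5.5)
Both $\Psi^W_{uv}(x,y)$ and $t_H(W)$ will be written as polymer partition functions over the same ground set $X=V(H)$, and their logarithms will be compared with Lemma~\ref{lem:polymer-perturbation}. Fix $x,y$ and consider the partial graphon integral in which $x_u=x$ and $x_v=y$ are frozen. Expanding $W=1-U$ on every edge of $H-uv$, as in the proof of Lemma~\ref{lem:graphon-polymer}, gives
\[
\Psi^W_{uv}(x,y)=\sum_{\Gamma\text{ pairwise compatible}}\prod_{S\in\Gamma}\tilde z(S).
\]
The polymers are again the sets $S\subseteq V(H)$ with $|S|\ge2$. The activity is
\[
\tilde z(S)=\sum_{A\subseteq E(H[S])\setminus\{uv\},\ (S,A)\text{ connected}}(-1)^{|A|}\tilde t_{(S,A)},
\]
where $\tilde t_{(S,A)}$ integrates only over $S\setminus\{u,v\}$, with $u$ and $v$ evaluated at $x$ and $y$ whenever they lie in $S$. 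The Fubini factorization still works, because the frozen coordinates are constants and each connected component is integrated separately. Thus
\[
\Psi^W_{uv}(x,y)=\Xi(\tilde z),\qquad t_H(W)=\Xi(z),\qquad z=z_{H,U}.
\]
Note that $\tilde z(\{u,v\})=0$, since the edge $uv$ has been removed.

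Next I would bound the new activities using Lemma~\ref{lem:tree-bound} pointwise, followed by the rooted integrals of Lemma~\ref{lem:tree-integrals}. There are three cases.
\begin{itemize}
\item If $S$ contains neither $u$ nor $v$, then $\tilde z(S)=z(S)$, and the unrooted bound $q\rho^{s-2}$ holds.
\item If $S$ contains exactly one of $u,v$, then \eqref{eq:tree-one-root} gives $|\tilde z(S)|\le|\cT(H[S])|\rho^{s-1}$.
\item If $S\supseteq\{u,v\}$, the spanning trees avoid the edge $uv$, so $u$ and $v$ are at tree-distance at least two, and \eqref{eq:tree-two-roots} gives $|\tilde z(S)|\le|\cT(H[S])|\rho^{s-2}$ for $s\ge3$.
\end{itemize}
Using Lemma~\ref{lem:tree-counts}(1) and (3), I would then show the local bound
\[
\max_{w\in V(H)}\sum_{S\ni w}\bigl(|z(S)|+|\tilde z(S)|\bigr)e^{|S|}\le Ch\rho,
\]
exactly as for \eqref{eq:activity-local}, now summing geometric series in $4e\Delta\rho$ and $3e^2h\rho$. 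This gives $\delta\le Ch\rho\le1/4$ once $c$ is small.

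The two activity systems differ only on polymers meeting $\{u,v\}$. Applying Lemma~\ref{lem:polymer-perturbation} to $z$ and $\tilde z$ gives
\[
|\log\Psi-\log t_H|\le\sum_S|\tilde z(S)-z(S)|\,e^{2\delta|S|}\le\sum_{S\cap\{u,v\}\ne\varnothing}\bigl(|\tilde z(S)|+|z(S)|\bigr)e^{|S|}.
\]
The right-hand side is bounded by twice the local sum at $u$ plus twice the local sum at $v$, which is $O(h\rho)$. The polymer $S=\{u,v\}$ contributes $|z(\{u,v\})|=q\le\rho$, which is also absorbed. Exponentiating yields $e^{-Ch\rho}t_H\le\Psi\le e^{Ch\rho}t_H$.

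The main obstacle is the pinned polymers, and more precisely the summability of activities that contain both $u$ and $v$. Here the only available counting is the crude $(3eh)^{s-2}$ bound from Lemma~\ref{lem:tree-counts}(3). That is exactly why this case needs the two-root estimate \eqref{eq:tree-two-roots}, which in turn relies on the deletion of $uv$ forcing distance at least two. A secondary subtlety is that for the pinned activities there is no factor $q$, only powers of $\rho$. I have to make sure that the single-root sets with $s=2$ (a neighbor $w$ of $u$, giving $\rho$ each, at most $\Delta$ of them) still total $O(h\rho)$ rather than something larger, which they do because $\Delta\le h$.
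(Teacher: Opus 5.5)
Your proposal is correct and follows the paper's proof essentially step for step. You use the same pinned polymer representation $\Psi^W_{uv}(x,y)=\Xi(\tilde z)$, the same three-way split of polymers by $|S\cap\{u,v\}|$ (bounded via Lemmas~\ref{lem:tree-bound}, \ref{lem:tree-integrals} and \ref{lem:tree-counts}(1),(3)), and the same comparison through Lemma~\ref{lem:polymer-perturbation}. The one point worth stating explicitly is that both $\Psi^W_{uv}(x,y)$ and $t_H(W)$ are nonnegative and nonzero by Lemma~\ref{lem:KP}, hence positive, so taking real logarithms and exponentiating at the end is legitimate.
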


\begin{proof}
For $S\subseteq V(H)$ with $|S|\ge2$, define the fixed-endpoint activity
\[
\widehat z_{x,y}(S)=
\sum_{\substack{A\subseteq E((H-uv)[S])\\(S,A)\text{ connected}}}
(-1)^{|A|}
\int_{[0,1]^{S\setminus\{u,v\}}}
\prod_{ab\in A}U(x_a,x_b)
\prod_{a\in S\setminus\{u,v\}}\dd x_a,
\]
where $x_u=x$ and $x_v=y$ whenever those labels lie in $S$. Expanding $\Psi^W_{uv}(x,y)$ in $U=1-W$ and grouping the connected components exactly as in the proof of Lemma~\ref{lem:graphon-polymer} gives
\[
\Psi^W_{uv}(x,y)=\Xi\bigl((\widehat z_{x,y}(S))_S\bigr).
\]
A component meeting neither fixed vertex has activity $z_{H,U}(S)$. If a component meets exactly one of $u,v$, Lemmas~\ref{lem:tree-bound}, \ref{lem:tree-integrals}, and \ref{lem:tree-counts}(1) give
\[
\sum_{|S\cap\{u,v\}|=1}|\widehat z_{x,y}(S)|e^{|S|}
\le2e\sum_{j\ge1}(4e\Delta\rho)^j
=O(\Delta\rho)=O(h\rho).
\]
If a component contains both fixed vertices, the direct edge $uv$ is absent. Their distance in every spanning tree is therefore at least two, and Lemmas~\ref{lem:tree-integrals} and \ref{lem:tree-counts}(3) give
\[
\sum_{S\supseteq\{u,v\}}|\widehat z_{x,y}(S)|e^{|S|}
\le e^2\sum_{j\ge1}(3e^2h\rho)^j
=O(h\rho).
\]
The unrestricted and fixed-endpoint systems are equal on polymers disjoint from $\{u,v\}$. Combining the two displays with \eqref{eq:activity-local},
\begin{align*}
\sum_{S:\,S\cap\{u,v\}\ne\varnothing}
|z_{H,U}(S)-\widehat z_{x,y}(S)|e^{|S|}
&\le
\sum_{S:\,S\cap\{u,v\}\ne\varnothing}
\bigl(|z_{H,U}(S)|+|\widehat z_{x,y}(S)|\bigr)e^{|S|}\\
&\le Ch\rho.
\end{align*}
The same estimates give a local norm at most $Ch\rho\le1/4$ for the coordinatewise maximum of the two activity systems. Lemma~\ref{lem:polymer-perturbation} now yields
\[
\left|\log\Psi^W_{uv}(x,y)-\log t_H(W)\right|\le Ch\rho.
\]
Both quantities are nonnegative integrals and are nonzero by Lemma~\ref{lem:KP}. Hence they are positive, and exponentiation proves the theorem.
\end{proof}

We finish the section by transferring these graphon estimates to finite graphs.

\subsection{Transfer to finite graphs}

For an $n$-vertex graph $G$, split $[0,1]$ into equal intervals $I_x$ indexed by $x\in V(G)$. Its step graphon is
\[
W_G(s,t)=\1_{xy\in E(G)}
\qquad\text{when }(s,t)\in I_x\times I_y.
\]
Consequently,
\begin{equation}\label{eq:step-graphon}
t_F(W_G)=\frac{\homc(F,G)}{n^{v(F)}}.
\end{equation}
The diagonal blocks are zero because $G$ has no loops. Hence
\[
\rho(G):=\rho(W_G)=\max_{x\in V(G)}\frac{n-d_G(x)}{n}.
\]
The term $n-d_G(x)$ includes $x$ itself. This is the normalized maximum missing degree, not the spectral radius of $G$.

The meaning of \eqref{eq:Psi} for a step graphon is exact:
\begin{equation}\label{eq:Psi-finite}
n^{h-2}\Psi^{W_G}_{uv}(x,y)
=\homc_{u\mapsto x,\,v\mapsto y}(H-uv,G).
\end{equation}
By \eqref{eq:collision}, the corresponding injective count with fixed images of $u$ and $v$ differs from the right-hand side by $O_H(n^{h-3})$.

\begin{corollary}\label{cor:finite-switching}
There are absolute constants $c,C>0$ such that, if $h\rho(G)\le c$, then uniformly in the chosen $x,y\in V(G)$:
\begin{enumerate}
\item if $xy\notin E(G)$, then
\[
\log\frac{\inj(H,G+xy)}{\inj(H,G)}
=\frac{2m}{n^2}
\bigl(1+O(h\rho(G))+O_H(n^{-1})\bigr);
\]
\item if $xy\in E(G)$, then
\[
\log\frac{\inj(H,G)}{\inj(H,G-xy)}
=\frac{2m}{n^2}
\bigl(1+O(h(\rho(G)+n^{-1}))+O_H(n^{-1})\bigr).
\]
\end{enumerate}
\end{corollary}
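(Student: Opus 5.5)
The plan is to write the change in $\inj(H,\cdot)$ from toggling the pair $xy$ as a sum, over the oriented edges of $H$, of counts of $H-uv$ with the endpoints pinned at $x$ and $y$. These pinned counts are then compared with $t_H(W)$ via Theorem~\ref{thm:fixed-endpoint}.

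\textbf{Decomposition.} For a graph $G'$ containing the edge $xy$, an injective copy of $H$ in $G'$ either avoids $xy$ or uses it. If it uses it, injectivity forces exactly one oriented edge $(u,v)$ of $H$ with $u\mapsto x$, $v\mapsto y$, or the reverse orientation. Writing $G^-=G'-xy$, this gives
\[
\inj(H,G')-\inj(H,G^-)=\sum_{(u,v)}\inj_{u\mapsto x,\,v\mapsto y}(H-uv,G^-),
\]
where the sum runs over the $2m$ oriented edges of $H$. In the summand, $xy$ is automatically not reused because $u,v$ are pinned and $H-uv$ lacks $uv$. Combining \eqref{eq:Psi-finite} with \eqref{eq:collision}, each summand equals
\[
n^{h-2}\Psi^{W_{G^-}}_{uv}(x,y)+O_H(n^{h-3}).
\]

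\textbf{Case (1).} Take $G^-=G$ and $G'=G+xy$. Theorem~\ref{thm:fixed-endpoint}, applied to $W_G$ with $h\rho(G)\le c$, gives $\Psi=t_H(W_G)e^{O(h\rho)}$. To control the $O_H(n^{h-3})$ error relative to the main term, I need a lower bound on $t_H(W_G)$. Lemma~\ref{lem:polymer-perturbation} with $z'=0$ gives $\log t_H(W_G)=O(mh\rho)$, so $t_H(W_G)$ is bounded below by a positive constant depending only on $H$. Hence the additive error becomes the relative error $O_H(n^{-1})$. Also, $\inj(H,G)=n^h t_H(W_G)(1+O_H(n^{-1}))$ by \eqref{eq:step-graphon} and \eqref{eq:collision}. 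Dividing, the ratio minus one is
\[
\frac{2m}{n^2}\bigl(1+O(h\rho)+O_H(n^{-1})\bigr).
\]
Since this quantity is $O_H(n^{-2})$, taking the logarithm changes it only by $O_H(n^{-4})$, which is absorbed into the $O_H(n^{-1})$ relative error.

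\textbf{Case (2).} Now $G^-=G-xy$, and the main obstacle is that the hypothesis bounds $\rho(G)$, not $\rho(G-xy)$. Deleting one edge raises the missing degree of $x$ and $y$ by one, so $\rho(G-xy)\le\rho(G)+1/n$. For $n$ large, $h\rho(G-xy)\le c$ still holds once $c$ in the corollary is taken to be, say, half the constant of Theorem~\ref{thm:fixed-endpoint}. The same computation with $W_{G-xy}$ then yields relative error $O(h(\rho(G)+n^{-1}))$, which accounts for the extra $n^{-1}$ inside the $O(h\cdot)$ in the statement. The denominator is now $\inj(H,G-xy)$, matching the decomposition directly, and uniformity in $x,y$ is inherited from the uniformity of Theorem~\ref{thm:fixed-endpoint}.
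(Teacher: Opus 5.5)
Your proposal is correct and follows the paper's proof essentially step for step. It uses the same decomposition over the $2m$ oriented edges into pinned counts of $H-uv$, the same comparison with $t_H(W_G)$ via \eqref{eq:Psi-finite} and Theorem~\ref{thm:fixed-endpoint}, the same polymer lower bound on $t_H(W_G)$ to turn the $O_H(n^{h-3})$ collision error into a relative $O_H(n^{-1})$, and the same reduction of part (2) to part (1) for $G-xy$ via $\rho(G-xy)\le\rho(G)+1/n$. Two differences are cosmetic: you get positivity of $\inj(H,G)$ from that lower bound for large $n$ rather than from the paper's greedy embedding, and your ``for $n$ large'' in part (2) is unnecessary, since $\rho(G)\ge 1/n$ already gives $h\rho(G-xy)\le 2h\rho(G)$.
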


\begin{proof}
Assume $m>0$. Equations \eqref{eq:step-graphon} and \eqref{eq:collision} give
\begin{equation}\label{eq:inj-step}
\inj(H,G)=n^h t_H(W_G)+O_H(n^{h-1}).
\end{equation}
We record a lower bound for its leading factor. The two-vertex activities have total $-mq$. By \eqref{eq:activity-high},
\[
\left|\sum_{|S|\ge3}z_{H,U}(S)\right|\le Cmqh\rho(G).
\]
The nonlinear error in Lemma~\ref{lem:polymer-perturbation} is at most
\[
mq(e^{4\delta}-1)
+\sum_{|S|\ge3}|z_{H,U}(S)|\bigl(e^{2\delta|S|}-1\bigr)
\le Cmqh\rho(G).
\]
Thus
\[
|\log t_H(W_G)+mq|\le Cmqh\rho(G).
\]
In particular, $t_H(W_G)$ is bounded below by a positive constant depending only on the fixed $H$, so the error in \eqref{eq:inj-step} is relatively $O_H(n^{-1})$.

We also verify positivity before taking logarithms. Since $\rho(G)\ge1/n$, when greedily embedding a new vertex of $H$, the already used vertices and the nonneighbors of the images of its already embedded neighbors exclude at most
\[
(h-1)+(h-1)n\rho(G)\le2nh\rho(G)<n
\]
host vertices after decreasing $c$. Hence $\inj(H,G)>0$. The same argument applies after deleting one edge.

Suppose first that $xy\notin E(G)$. Every new injective map uses $xy$, and injectivity selects a unique oriented edge $(u,v)$ of $H$ mapped to $(x,y)$. By \eqref{eq:Psi-finite},
\[
\inj_{u\mapsto x,\,v\mapsto y}(H-uv,G)
=n^{h-2}\Psi^{W_G}_{uv}(x,y)+O_H(n^{h-3}).
\]
Summing over the $2m$ oriented edges and applying Theorem~\ref{thm:fixed-endpoint},
\[
\inj(H,G+xy)-\inj(H,G)
=2mn^{h-2}t_H(W_G)
\bigl(1+O(h\rho(G))+O_H(n^{-1})\bigr).
\]
Divide by \eqref{eq:inj-step} and use $\log(1+s)=s+O(s^2)$, where $s=O_H(n^{-2})$, to prove the first assertion.

For the second assertion, apply the first one to $G-xy$. Deleting one edge increases each affected missing degree by one, so $\rho(G-xy)\le\rho(G)+1/n$. Decreasing $c$ once makes the first assertion applicable and gives the claimed formula.
\end{proof}

\section{Color profiles and multipartite balancing}\label{sec:profiles}

Throughout this section, let $H$ be a graph with $h=v(H)$ and $m=e(H)$. A probability vector is a vector $p=(p_1,\ldots,p_r)$ with $p_i\ge0$ and $\sum_i p_i=1$.

In an independent $p$-coloring, each $v\in V(H)$ receives a color $c(v)\in[r]$ independently, with $\Prb(c(v)=i)=p_i$. It is proper if adjacent vertices receive different colors. Define
\[
\Phi_H(p)=
\sum_{\substack{c:V(H)\to[r]\\c\text{ proper}}}
\left(\prod_{v\in V(H)}p_{c(v)}\right).
\]
Thus $\Phi_H(p)$ is exactly the probability that an independent $p$-coloring is proper. Set
\[
s_k(p)=\sum_i p_i^k.
\]
For $S\subseteq V(H)$ with $|S|\ge2$, define
\[
\chi_H(S)=
\sum_{\substack{A\subseteq E(H[S])\\(S,A)\text{ connected}}}(-1)^{|A|},
\qquad
z_p(S)=\chi_H(S)s_{|S|}(p).
\]
The sets $S\subseteq V(H)$ of size at least two are polymers, compatible exactly when they are disjoint.

\begin{lemma}\label{lem:profile-polymer}
For every probability vector $p$,
\begin{equation}\label{eq:profile-polymer}
\Phi_H(p)=
\sum_{\Gamma\text{ pairwise disjoint}}
\prod_{S\in\Gamma}z_p(S).
\end{equation}
Moreover,
\[
|\chi_H(S)|\le|\cT(H[S])|.
\]
\end{lemma}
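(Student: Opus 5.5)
The plan is to mirror the proof of Lemma~\ref{lem:graphon-polymer}, with the graphon $U$ replaced by the ``same colour'' indicator. Write the probability that an independent $p$-colouring is proper as
\[
\Phi_H(p)=\E\prod_{ab\in E(H)}\bigl(1-\1_{c(a)=c(b)}\bigr),
\]
and expand the product over subsets $A\subseteq E(H)$:
\[
\Phi_H(p)=\sum_{A\subseteq E(H)}(-1)^{|A|}\,\Prb\bigl(c(a)=c(b)\text{ for all }ab\in A\bigr).
\]
For a fixed $A$, let $S_1,\ldots,S_\ell$ be the vertex sets of the nontrivial components of $(V(H),A)$, and put $A_j=A\cap\binom{S_j}{2}$. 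The event above says exactly that each $S_j$ is monochromatic. Since the sets $S_j$ are disjoint and the colours are independent, its probability is $\prod_j s_{|S_j|}(p)$. Indeed, $S_j$ is monochromatic with probability $\sum_i p_i^{|S_j|}$, and vertices outside $\bigcup S_j$ are unconstrained and contribute total mass $1$.

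Next, I would use the same bijection as in Lemma~\ref{lem:graphon-polymer}. Edge sets $A$ correspond exactly to families of pairwise disjoint sets $S_j$ of size at least two, each equipped with a connected spanning edge set $A_j\subseteq E(H[S_j])$. Summing over the choices of the $A_j$ for fixed $\{S_j\}$ factorizes as
\[
\prod_j\Bigl(\sum_{A_j}(-1)^{|A_j|}\Bigr)s_{|S_j|}(p)=\prod_j\chi_H(S_j)\,s_{|S_j|}(p)=\prod_j z_p(S_j).
\]
This is exactly \eqref{eq:profile-polymer}; the empty family corresponds to $A=\varnothing$ and contributes $1$.

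For the bound, apply Lemma~\ref{lem:tree-bound} to the graph $J=H[S]$ with all $a_e=1$. The left side there is $|\chi_H(S)|$ and the right side counts the spanning trees of $H[S]$, so $|\chi_H(S)|\le|\cT(H[S])|$.

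There is no serious obstacle. The only point that needs care is the independence and factorization step: the vertex sets of distinct components are disjoint, so the monochromatic events for different $S_j$ are independent.
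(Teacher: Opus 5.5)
Your proposal is correct and follows essentially the same route as the paper. You expand over edge subsets by inclusion--exclusion, factor each term over the nontrivial components as $\prod_j s_{|S_j|}(p)$, regroup through the bijection with disjoint families carrying connected spanning edge sets, and get the bound from Lemma~\ref{lem:tree-bound} with all weights equal to $1$.
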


\begin{proof}
For $e=uv\in E(H)$, let $B_e$ be the event $c(u)=c(v)$. Inclusion--exclusion gives
\[
\Phi_H(p)=
\sum_{A\subseteq E(H)}(-1)^{|A|}
\Prb\!\left(\bigcap_{e\in A}B_e\right).
\]
Fix $A$ and let $S_1,\ldots,S_\ell$ be the vertex sets of the connected components of $(V(H),A)$ containing an edge. Put $A_j=A\cap\binom{S_j}{2}$. The equalities imposed by the edges in $A$ force all vertices of $S_j$ to have one common color, so
\[
\Prb\!\left(\bigcap_{e\in A}B_e\right)
=\prod_{j=1}^\ell\sum_{i=1}^r p_i^{|S_j|}
=\prod_{j=1}^\ell s_{|S_j|}(p),
\]
and
\[
A=A_1\dotcup\cdots\dotcup A_\ell,
\qquad
(-1)^{|A|}=\prod_{j=1}^\ell(-1)^{|A_j|}.
\]
Conversely, pairwise disjoint sets $S_1,\ldots,S_\ell$ and connected choices $A_j\subseteq E(H[S_j])$ satisfy
\[
A=\dot\bigcup_{j=1}^\ell A_j,
\text{and}~
\{\text{nontrivial components of }(V(H),A)\}
=\{(S_j,A_j):1\le j\le\ell\}.
\]
Therefore
\begin{align*}
\Phi_H(p)
&=\sum_{\Gamma\text{ pairwise disjoint}}
\prod_{S\in\Gamma}
\left(
\sum_{\substack{A\subseteq E(H[S])\\(S,A)\text{ connected}}}
(-1)^{|A|}s_{|S|}(p)
\right)\\
&=\sum_{\Gamma\text{ pairwise disjoint}}
\prod_{S\in\Gamma}z_p(S),
\end{align*}
which yields \eqref{eq:profile-polymer}. The last assertion follows from Lemma~\ref{lem:tree-bound} with all edge weights equal to $1$.
\end{proof}

\subsection{Continuous profile balancing}

Suppose $p_i\ge p_j$, and move mass $t$ from coordinate $i$ to coordinate $j$:
\[
p'_i=p_i-t,
\qquad
p'_j=p_j+t,
\qquad
0<t\le\frac{p_i-p_j}{2}.
\]
All other coordinates are unchanged. Define
\[
\rho=\max_k p_k,
\qquad
\Delta_2=s_2(p)-s_2(p')=2t(p_i-p_j-t)>0.
\]
The transfer moves the two coordinates toward equality, and $\max_k p'_k\le\rho$.

\begin{lemma}\label{lem:power-sum}
For every integer $k\ge2$,
\[
0\le s_k(p)-s_k(p')
\le\binom{k}{2}\rho^{k-2}\Delta_2.
\]
\end{lemma}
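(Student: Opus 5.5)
Only the two changed coordinates contribute, so
\[
s_k(p)-s_k(p')=f(p_i)+f(p_j)-f(p_i-t)-f(p_j+t),\qquad f(y)=y^k .
\]
The idea is to express this difference as a double integral of $f''$. This integral is manifestly nonnegative and can be compared directly with the case $k=2$, where $f''\equiv2$ and the same expression equals $\Delta_2$.

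\textbf{Step 1: the integral representation.} Write $a=p_j$ and $b=p_i$. Since $t\le(b-a)/2$, the points satisfy $a\le a+t\le b-t\le b$. Then
\[
f(b)-f(b-t)-\bigl(f(a+t)-f(a)\bigr)=\int_0^t\bigl(f'(b-t+u)-f'(a+u)\bigr)\dd u
=\int_0^t\int_{a+u}^{b-t+u}f''(y)\dd y\dd u .
\]
For each $u\in[0,t]$ the inner interval $[a+u,\,b-t+u]$ has length $b-a-t\ge0$. It is contained in $[a,b]\subseteq[0,\rho]$.

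\textbf{Step 2: the two bounds.} Because $f''(y)=k(k-1)y^{k-2}\ge0$ on $[0,\rho]$, the double integral is nonnegative. This gives the lower bound $s_k(p)-s_k(p')\ge0$. For the upper bound, use $f''(y)\le k(k-1)\rho^{k-2}$ on that range. The integration region has measure $t(b-a-t)$, so
\[
s_k(p)-s_k(p')\le k(k-1)\rho^{k-2}\,t(p_i-p_j-t)=\binom{k}{2}\rho^{k-2}\cdot 2t(p_i-p_j-t)=\binom k2\rho^{k-2}\Delta_2 .
\]

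\textbf{Main point to check.} The only delicate point is that every point at which $f''$ is evaluated lies in $[0,\rho]$. This holds because the whole integration region lies in $[p_j,p_i]$ and $p_i\le\rho$. It is exactly where the hypothesis $t\le(p_i-p_j)/2$ is used: it guarantees $b-t\ge a+t$, so the inner intervals are correctly oriented and have nonnegative length. For $k=2$ the bound is an equality, which is consistent with the definition of $\Delta_2$.
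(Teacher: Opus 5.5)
Your proof is correct and is essentially the paper's argument. The paper writes $p_i=c+x$, $p_j=c-x$ and expresses $s_k(p)-s_k(p')=\int_{x-t}^{x}g_k'(u)\,du$, where $g_k'(u)=k\bigl((c+u)^{k-1}-(c-u)^{k-1}\bigr)=\int_{c-u}^{c+u}f''(y)\,dy\le 2k(k-1)u\rho^{k-2}$; this is the same double integral of $f''$ over a region of area $\Delta_2/2$ inside $[p_j,p_i]$, only parametrized symmetrically about the midpoint rather than by your shear, and one minor correction is that your inner intervals $[a+u,b-t+u]$ need only $t\le p_i-p_j$ to be correctly oriented, so that step uses less than the hypothesis $t\le(p_i-p_j)/2$.
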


\begin{proof}
Write $p_i=c+x$ and $p_j=c-x$, where $x\ge t$, and define
\[
g_k(u)=(c+u)^k+(c-u)^k.
\]
Then
\[
s_k(p)-s_k(p')=\int_{x-t}^x g'_k(u)\dd u.
\]
For $0\le u\le x$,
\[
0\le g'_k(u)
=k\bigl((c+u)^{k-1}-(c-u)^{k-1}\bigr)
\le2k(k-1)u\rho^{k-2}.
\]
Consequently,
\[
s_k(p)-s_k(p')
\le\int_{x-t}^x2k(k-1)u\rho^{k-2}\dd u
=\binom{k}{2}\rho^{k-2}\Delta_2,
\]
because $\Delta_2=2(x^2-(x-t)^2)$.
\end{proof}

\begin{theorem}\label{thm:profile-balancing}
There are absolute constants $c,C>0$ such that, if $h\rho\le c$, then
\begin{equation}\label{eq:profile-balancing}
\left|
\log\frac{\Phi_H(p')}{\Phi_H(p)}-m\Delta_2
\right|
\le Cmh\rho\Delta_2.
\end{equation}
In particular,
\[
\log\frac{\Phi_H(p')}{\Phi_H(p)}
\ge m\Delta_2(1-Ch\rho).
\]
\end{theorem}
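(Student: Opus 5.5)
Apply Lemma~\ref{lem:polymer-perturbation} to the two activity systems $z=z_p$ and $z'=z_{p'}$ from Lemma~\ref{lem:profile-polymer}, on the same polymer set $\cP=\{S\subseteq V(H):|S|\ge2\}$. By \eqref{eq:profile-polymer}, $\Phi_H(p)=\Xi(z_p)$ and $\Phi_H(p')=\Xi(z_{p'})$. The linear term then carries the main contribution and the cluster corrections are of relative order $h\rho$.

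First, bound the local norm $\delta$. Since $\max_k p'_k\le\rho$ and both vectors are probability vectors, $s_k(p),s_k(p')\le\rho^{k-1}$. With $|\chi_H(S)|\le|\cT(H[S])|$ and Lemma~\ref{lem:tree-counts}(1), the sets of size $s$ containing a fixed vertex contribute at most $(4\Delta)^{s-1}\rho^{s-1}e^s$ to $\delta$. Summing over $s\ge2$ gives $\delta\le C\Delta\rho\le Ch\rho\le1/4$ once $c$ is small. Hence both $\Xi$ values are nonzero. They are probabilities, so they are positive, and \eqref{eq:polymer-perturbation} applies.

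Next, compute the linear term $\sum_S(z_{p'}(S)-z_p(S))=-\sum_S\chi_H(S)\bigl(s_{|S|}(p)-s_{|S|}(p')\bigr)$. For $|S|=2$ we have $\chi_H(S)=-1$ exactly when $S$ is an edge and $0$ otherwise, so these terms give exactly $m\Delta_2$. For $|S|=s\ge3$, Lemma~\ref{lem:power-sum} gives $|s_s(p)-s_s(p')|\le\binom{s}{2}\rho^{s-2}\Delta_2$. Lemma~\ref{lem:tree-counts}(2) bounds the number of relevant $(S,T)$ pairs by $8m(4\Delta)^{s-2}$. Therefore
\[
\sum_{|S|\ge3}|z_{p'}(S)-z_p(S)|\,e^{|S|}\le \sum_{s\ge3}8m(4\Delta)^{s-2}\binom{s}{2}\rho^{s-2}e^s\Delta_2\le Cm\Delta\rho\,\Delta_2,
\]
since the polynomial factor $\binom{s}{2}$ is absorbed by a geometric ratio $4e\Delta\rho\le 1/2$. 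This gives linear term $=m\Delta_2+O(mh\rho\Delta_2)$.

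Finally, bound the nonlinear error $\sum_S|z_S-z'_S|(e^{2\delta|S|}-1)$. For edges, this is $m\Delta_2(e^{4\delta}-1)=O(mh\rho\Delta_2)$. For $|S|\ge3$, use $e^{2\delta|S|}-1\le e^{|S|}$ together with the display above. Combining the pieces yields \eqref{eq:profile-balancing}, and the lower bound follows immediately.

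The main obstacle is keeping every higher-order estimate proportional to $\Delta_2$, with no additive loss. Lemma~\ref{lem:power-sum} is exactly what provides this. The only care needed is that the factor $\binom{s}{2}$ together with the tree counts still sums geometrically; this holds after shrinking $c$.
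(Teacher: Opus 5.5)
Your proposal is correct and follows essentially the same route as the paper. You apply Lemma~\ref{lem:polymer-perturbation} to $z_p$ and $z_{p'}$, bound the local norm using $s_k\le\rho^{k-1}$ and Lemma~\ref{lem:tree-counts}(1), extract exactly $m\Delta_2$ from the edge polymers, control the $|S|\ge3$ differences with Lemma~\ref{lem:power-sum} and Lemma~\ref{lem:tree-counts}(2), and handle the nonlinear error via $e^{4\delta}-1=O(h\rho)$ and $e^{2\delta|S|}-1\le e^{|S|}$, just as the paper does.
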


\begin{proof}
For $q\in\{p,p'\}$, every coordinate is at most $\rho$, so $s_s(q)\le\rho^{s-1}$. Lemmas~\ref{lem:tree-bound} and \ref{lem:profile-polymer} give
\[
|z_q(S)|\le|\cT(H[S])|\rho^{|S|-1}.
\]
By (1) of Lemma~\ref{lem:tree-counts},
\[
\max_{v\in V(H)}
\sum_{S\ni v}\max\{|z_p(S)|,|z_{p'}(S)|\}e^{|S|}
\le e\sum_{j\ge1}(4e\Delta\rho)^j
\le C'\Delta\rho\le C'h\rho
\]
for an absolute constant $C'$. Choose $c$ so that $C'h\rho\le1/4$. Thus Lemma~\ref{lem:polymer-perturbation} applies with $\delta\le C'h\rho$.

A two-vertex activity changes by $\Delta_2$ precisely when its two vertices form an edge of $H$. Their total first-order change is $m\Delta_2$. For $s=|S|\ge3$, Lemma~\ref{lem:power-sum} gives
\[
|z_{p'}(S)-z_p(S)|
\le|\cT(H[S])|\binom{s}{2}\rho^{s-2}\Delta_2.
\]
Using (2) of Lemma~\ref{lem:tree-counts},
\begin{equation}\label{eq:profile-high-difference}
\sum_{|S|\ge3}|z_{p'}(S)-z_p(S)|e^{|S|}
\le Cm\Delta_2\sum_{j\ge1}\binom{j+2}{2}(4e\Delta\rho)^j
\le Cm\Delta\rho\Delta_2
\le Cmh\rho\Delta_2.
\end{equation}
The three error terms in Lemma~\ref{lem:polymer-perturbation} satisfy
\begin{align*}
\left|
\log\frac{\Phi_H(p')}{\Phi_H(p)}-m\Delta_2
\right|
&\le
\left|\sum_{|S|\ge3}(z_{p'}(S)-z_p(S))\right|
+m\Delta_2(e^{4\delta}-1)\\
&\quad+
\sum_{|S|\ge3}|z_{p'}(S)-z_p(S)|
\bigl(e^{2\delta|S|}-1\bigr)\\
&\le Cmh\rho\Delta_2.
\end{align*}
Here the first and third terms are bounded by \eqref{eq:profile-high-difference}, using $e^{2\delta|S|}-1\le e^{|S|}$, and the middle term is $O(mh\rho\Delta_2)$. This proves \eqref{eq:profile-balancing}. Nonvanishing follows from Lemma~\ref{lem:KP}, and both partition functions are positive probabilities.
\end{proof}

\subsection{Finite balancing and coloring consequences}

Let $a=(a_1,\ldots,a_r)\in\mathbb Z_{\ge0}^r$ with $\sum_i a_i=n$, set $p_i=a_i/n$, and let $K_a=K_{a_1,\ldots,a_r}$. For a partition $\pi$ of $V(H)$, the quotient $H/\pi$ has the blocks of $\pi$ as vertices. An edge inside a block becomes a loop. Since a loop cannot map to a simple graph, we set $\homc(H/\pi,G)=0$ in that case. Parallel quotient edges may be suppressed. Define
\[
\mu(\pi)=\prod_{B\in\pi}(-1)^{|B|-1}(|B|-1)!.
\]

\begin{lemma}\label{lem:mobius-inversion}
For every pair of graphs $H,G$,
\begin{equation}\label{eq:mobius-inversion}
\inj(H,G)=\sum_\pi\mu(\pi)\homc(H/\pi,G),
\end{equation}
where the sum is over all partitions of $V(H)$.
\end{lemma}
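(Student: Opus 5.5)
The identity \eqref{eq:mobius-inversion} is the standard M\"obius inversion on the lattice $\Pi(V(H))$ of set partitions of $V(H)$. I will first establish the forward relation expressing $\homc$ in terms of injective counts of quotients, and then invert it.

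\emph{Forward relation.} Every map $\varphi:V(H)\to V(G)$ determines the partition $\ker\varphi$ of $V(H)$ into nonempty fibres. It then factors uniquely as the quotient map $V(H)\to V(H)/\ker\varphi$ followed by an injective map $\bar\varphi$ on the blocks. The map $\varphi$ is a homomorphism exactly when no edge of $H$ lies inside a block, since $G$ has no loops, and when $\bar\varphi$ is an injective homomorphism of the simple quotient $H/\ker\varphi$ into $G$. Parallel edges are suppressed, which does not affect whether a map is a homomorphism. Setting $\inj(H/\sigma,G)=0$ whenever $H/\sigma$ has a loop, which matches the convention for $\homc$, we obtain for every partition $\sigma$
\[
\homc(H/\sigma,G)=\sum_{\tau\ge\sigma}\inj(H/\tau,G).
\]
Here $\tau\ge\sigma$ means $\tau$ is coarser than $\sigma$. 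This uses the identification of partitions of the blocks of $\sigma$ with the partitions $\tau\ge\sigma$, together with $(H/\sigma)/\tau=H/\tau$. A loop in $H/\sigma$ persists in every coarser quotient, so both sides vanish in that case.

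\emph{Inversion.} By M\"obius inversion over the partition lattice,
\[
\inj(H/\sigma,G)=\sum_{\tau\ge\sigma}\mu_{\Pi}(\sigma,\tau)\homc(H/\tau,G).
\]
Take $\sigma$ to be the partition into singletons, so that $H/\sigma=H$. It remains to recall the classical value $\mu_{\Pi}(\hat0,\pi)=\prod_{B\in\pi}(-1)^{|B|-1}(|B|-1)!$. The interval $[\hat0,\pi]$ is isomorphic to $\prod_{B}\Pi(B)$, and the M\"obius function is multiplicative over products. It therefore suffices to show $\mu(\Pi_k)=(-1)^{k-1}(k-1)!$.

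I would prove this last fact without a lattice computation by applying the forward relation to $H=\overline{K_k}$ (no edges) and $G=K_N$. Then $\homc(H/\pi,G)=N^{|\pi|}$ and $\inj(H/\pi,G)=(N)_{|\pi|}$, so the forward relation reads $N^k=\sum_\pi(N)_{|\pi|}$, the Stirling identity. The inverse identity $(N)_k=\sum_\pi\mu(\hat0,\pi)N^{|\pi|}$ holds for all $N$. Comparing the coefficient of $N^1$ on each side, after using multiplicativity on the other terms, gives $\mu(\Pi_k)=[N^1](N)_k=(-1)^{k-1}(k-1)!$.

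The main point needing care is the loop convention: I must check that the forward relation remains valid when quotients with loops contribute zero on both sides. This holds because coarsening preserves loops, so the sums are consistent. Everything else is routine.
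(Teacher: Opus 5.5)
Your proof is correct and follows the paper's route. The kernel-partition factorization gives $\homc(H/\sigma,G)=\sum_{\tau\ge\sigma}\inj(H/\tau,G)$ (looped quotients contribute zero), and M\"obius inversion on the partition lattice at the singleton partition then yields the identity; the only difference is that you derive $\mu(\hat0,\pi)=\prod_{B\in\pi}(-1)^{|B|-1}(|B|-1)!$ yourself from the $N^1$ coefficient of $(N)_k$ (only $\pi=\hat1$ contributes there, so multiplicativity is needed only to pass from $\Pi_k$ to general intervals $[\hat0,\pi]$), whereas the paper cites Stanley for this value.
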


\begin{proof}
If $\sigma$ is a partition of $V(H)$, every homomorphism $H/\sigma\to G$ has a unique kernel partition $\pi$ such that for every $A\in \sigma$ implies $A\in \pi$. Contracting its equal fibers leaves an injective map $H/\pi\to G$. Conversely, each such injective map gives one homomorphism with kernel $\pi$. Hence
\[
\homc(H/\sigma,G)=\sum_{\pi\ge\sigma}\inj(H/\pi,G).
\]
The partition-lattice M\"obius inversion formula \cite{Stanley}, applied with $\sigma$ the singleton partition, gives \eqref{eq:mobius-inversion} and the displayed formula for $\mu(\pi)$.
\end{proof}

For loopless $J$,
\[
\homc(J,K_a)
=\sum_{\substack{c:V(J)\to[r]\\c\text{ proper}}}
\prod_{x\in V(J)}a_{c(x)}
=n^{v(J)}\Phi_J(p).
\]
For $1\le k<h$, define
\[
F_{H,k}(p)=
\sum_{\pi:\,|\pi|=h-k}\mu(\pi)\Phi_{H/\pi}(p),
\]
where looped terms are zero. Lemma~\ref{lem:mobius-inversion} gives the exact identity
\begin{equation}\label{eq:finite-mobius}
n^{-h}\inj(H,K_a)
=\Phi_H(p)+\sum_{k=1}^{h-1}n^{-k}F_{H,k}(p).
\end{equation}
Every $F_{H,k}$ is a symmetric polynomial.

\begin{lemma}\label{lem:even-function}
Let $f$ be an even twice continuously differentiable function on $[-x_0,x_0]$. Suppose $|f''(x)|\le2M$ on this interval. If
\[
x_0=\frac{d+1}{2n},
\qquad
x_1=\frac{d-1}{2n}
\]
for an integer $d\ge1$, then
\[
|f(x_0)-f(x_1)|\le M\frac{d}{n^2}.
\]
\end{lemma}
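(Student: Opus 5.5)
Since $f$ is even, $f'(0)=0$, and so $|f'(u)|\le 2M|u|$ for every $u\in[-x_0,x_0]$. This follows from the mean value theorem applied to $f'$ on $[0,u]$, using $|f''|\le 2M$. The plan is to write the difference as an integral of $f'$ over $[x_1,x_0]$ and use this linear bound on $|f'|$.

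Note that $0\le x_1<x_0$ because $d\ge1$, so the interval $[x_1,x_0]$ lies in $[0,x_0]$. By the fundamental theorem of calculus,
\[
|f(x_0)-f(x_1)|
=\left|\int_{x_1}^{x_0}f'(u)\dd u\right|
\le\int_{x_1}^{x_0}2Mu\dd u
=M\bigl(x_0^2-x_1^2\bigr).
\]
It remains to evaluate $x_0^2-x_1^2$ for the given endpoints:
\[
x_0^2-x_1^2=\frac{(d+1)^2-(d-1)^2}{4n^2}=\frac{4d}{4n^2}=\frac{d}{n^2}.
\]
This gives exactly $|f(x_0)-f(x_1)|\le Md/n^2$.

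There is no real obstacle here. The only point needing care is that the bound $|f'(u)|\le 2M|u|$ really does use evenness, through $f'(0)=0$: without it, the difference would only be $O(1/n)$ rather than $O(d/n^2)$. The bound is also sharp, as $f(x)=Mx^2$ shows.
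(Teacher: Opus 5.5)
Your proof is correct and matches the paper's argument essentially verbatim: evenness gives $f'(0)=0$, hence $|f'(u)|\le 2M|u|$, and integrating over $[x_1,x_0]$ yields $M(x_0^2-x_1^2)=Md/n^2$. Your additional checks (the explicit mean value step, $x_1\ge0$, and sharpness via $f(x)=Mx^2$) are accurate.
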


\begin{proof}
Evenness gives $f'(0)=0$. Thus $|f'(x)|\le2M|x|$, and
\[
|f(x_0)-f(x_1)|
\le\int_{x_1}^{x_0}2Mx\dd x
=M(x_0^2-x_1^2)
=M\frac{d}{n^2}.
\]
\end{proof}

\begin{theorem}\label{thm:finite-balancing}
There are absolute constants $c,C>0$ such that the following holds. Suppose
\[
\max_i p_i\le\frac{10}{r},
\qquad
\frac{h}{r}\le c,
\]
and $K'$ is obtained from $K_a$ by moving one vertex from a part of size $a_i$ to a part of size $a_j\le a_i-2$. Set $d=a_i-a_j-1$, then
\begin{equation}\label{eq:finite-balancing}
\log\frac{\inj(H,K')}{\inj(H,K_a)}
\ge\frac{2md}{n^2}
\left(1-C\frac{h}{r}-\frac{C_{H,r}}{n}\right)
\end{equation}
for all sufficiently large $n$.
\end{theorem}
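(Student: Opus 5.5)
We work with the exact identity \eqref{eq:finite-mobius}, splitting $n^{-h}\inj(H,\cdot)$ into the main term $\Phi_H(p)$ and the lower-order corrections $\sum_{k\ge1}n^{-k}F_{H,k}(p)$. Moving one vertex from part $i$ to part $j$ is exactly a continuous transfer of mass $t=1/n$ from coordinate $i$ to coordinate $j$. Since $a_j\le a_i-2$, we have $t\le (p_i-p_j)/2$, so the setting of Theorem~\ref{thm:profile-balancing} applies. For $p'$ the new profile,
\[
\Delta_2=2t(p_i-p_j-t)=\frac{2}{n}\cdot\frac{a_i-a_j-1}{n}=\frac{2d}{n^2}.
\]
With $\rho=\max_kp_k\le 10/r$ we get $h\rho\le 10h/r\le 10c$, small after shrinking $c$. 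Theorem~\ref{thm:profile-balancing} then gives
\[
\log\frac{\Phi_H(p')}{\Phi_H(p)}\ge \frac{2md}{n^2}\Bigl(1-C\frac{h}{r}\Bigr).
\]
So the main term already has the desired form, and it remains to show the corrections only cost a relative error $C_{H,r}/n$.

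The first step is a lower bound on $\Phi_H(p)$ depending only on $H,r$. Lemma~\ref{lem:polymer-perturbation} with $z'=0$ gives $\log\Phi_H(p)\ge -C m\rho$, hence $\Phi_H(p)\ge e^{-10Cm/r}$, a positive constant depending on $H,r$. The same holds for $p'$.

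The second step, which I expect to be the main obstacle, is to show that each correction difference $n^{-k}\bigl(F_{H,k}(p')-F_{H,k}(p)\bigr)$ is $O_{H,r}(d/n^{3})$. A crude Lipschitz bound $O_{H,r}(1/n)$ on the change of $F_{H,k}$ is too weak: it gives $O(n^{-2})$, which is not small relative to $d/n^2$. Instead we exploit symmetry, using Lemma~\ref{lem:even-function}. Freeze all coordinates except $i,j$, and write $p_i=c+x$, $p_j=c-x$ with $c$ fixed. Since $F_{H,k}$ is symmetric, $f(x)=F_{H,k}(\ldots,c+x,\ldots,c-x,\ldots)$ is even in $x$. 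It is a polynomial with coefficients bounded in terms of $H$ (and $r$) on the bounded domain, so $|f''|\le 2M_{H,r}$. The move changes $x$ from $x_0=(a_i-a_j)/(2n)=(d+1)/(2n)$ to $x_1=(d-1)/(2n)$, exactly the setting of Lemma~\ref{lem:even-function}. Hence
\[
|F_{H,k}(p')-F_{H,k}(p)|\le M_{H,r}\frac{d}{n^2}.
\]
Multiplied by $n^{-k}$ with $k\ge1$, each correction changes by $O_{H,r}(d/n^3)$. The same bound also controls the size of the corrections themselves, $|F_{H,k}|=O_{H,r}(1)$.

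Finally, combine the pieces. Write $A=n^{-h}\inj(H,K_a)=\Phi_H(p)+E$ and $A'=\Phi_H(p')+E'$, where $|E|,|E'|=O_{H,r}(1/n)$ and $|E'-E|=O_{H,r}(d/n^3)$. Then
\[
\frac{A'}{A}=\frac{\Phi_H(p')}{\Phi_H(p)}\cdot\frac{1+E'/\Phi_H(p')}{1+E/\Phi_H(p)}.
\]
It remains to bound $E'/\Phi_H(p')-E/\Phi_H(p)$. It equals $(E'-E)/\Phi_H(p')+E\bigl(1/\Phi_H(p')-1/\Phi_H(p)\bigr)$. The first part is $O(d/n^3)$. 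For the second, Theorem~\ref{thm:profile-balancing} gives $|\log\Phi_H(p')-\log\Phi_H(p)|=O(m\Delta_2)=O(d/n^2)$, and $E=O(1/n)$, so the product is again $O_{H,r}(d/n^3)$. Taking logarithms yields
\[
\log\frac{A'}{A}\ge \frac{2md}{n^2}\Bigl(1-C\frac hr\Bigr)-O_{H,r}\Bigl(\frac{d}{n^3}\Bigr),
\]
which is \eqref{eq:finite-balancing} since $m\ge1$. The only delicate point is the even-function argument in the second step; every other step is a direct application of earlier results.
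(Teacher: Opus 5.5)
Your proposal is correct and follows the paper's proof. The main term comes from Theorem~\ref{thm:profile-balancing} with $\Delta_2=2d/n^2$, and the lower-order terms of \eqref{eq:finite-mobius} are shown to change by only $O_{H,r}(d/n^3)$ using symmetry and Lemma~\ref{lem:even-function}. The only difference is bookkeeping. The paper applies Lemma~\ref{lem:even-function} to the quotient $R_n=\Phi_H^{-1}\sum_k n^{-k}F_{H,k}$, which needs the uniform bound $\Phi_H\ge h!(2r)^{-h}$ along the whole segment. You apply it to the polynomials $F_{H,k}$ directly and handle the denominator separately, so you need a lower bound on $\Phi_H$ only at $p$ and $p'$, and your cluster-expansion estimate supplies it.
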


\begin{proof}
Assume first that $m\ge1$. Since $p_i-p_j=(d+1)/n$ and $d\ge1$, the transfer has size $1/n\le(p_i-p_j)/2$, and
\[
\Delta_2=\frac{2d}{n^2}.
\]
Theorem~\ref{thm:profile-balancing} gives
\begin{equation}\label{eq:finite-leading}
\log\frac{\Phi_H(p')}{\Phi_H(p)}
\ge\frac{2md}{n^2}\left(1-C\frac{h}{r}\right).
\end{equation}
Let $D_r$ be the set of probability vectors $q$ satisfying $\max_iq_i\le10/r$. Shrink $c$ so that $h/r\le c$ implies $r\ge20h$. Every $q\in D_r$ has at least $h$ coordinates at least $1/(2r)$. Otherwise its total mass would be less than
\[
(h-1)\frac{10}{r}+(r-h+1)\frac1{2r}<1.
\]
Assigning distinct such colors gives the uniform lower bound
\begin{equation}\label{eq:Phi-lower}
\Phi_H(q)\ge h!(2r)^{-h}.
\end{equation}
Define
\[
R_n(q)=
\frac{\sum_{k=1}^{h-1}n^{-k}F_{H,k}(q)}{\Phi_H(q)}.
\]
The functions $F_{H,k}$ and $\Phi_H$ are fixed polynomials, and \eqref{eq:Phi-lower} keeps the denominator away from zero. Hence, for every $q\in D_r$ and every two-coordinate line contained in $D_r$, the absolute value of the second derivative of $R_n$ along that line is at most $2C_{H,r}/n$. The term $k=1$ is $O_{H,r}(n^{-1})$, and the terms $k\ge2$ are smaller.

Keep all coordinates except $i,j$ fixed and put
\[
f(x)=R_n(q(x)),
\qquad
q_i(x)=\frac{p_i+p_j}{2}+x,
\qquad
q_j(x)=\frac{p_i+p_j}{2}-x.
\]
Symmetry of $R_n$ makes $f$ even. The old and new values of $x$ are $(d+1)/(2n)$ and $(d-1)/(2n)$, respectively. The entire segment lies in $D_r$ because $D_r$ is convex and balancing does not increase the largest coordinate. Lemma~\ref{lem:even-function} gives
\[
|R_n(p')-R_n(p)|\le C_{H,r}\frac{d}{n^3}.
\]
For large $n$, $|R_n|\le1/2$. Therefore
\begin{equation}\label{eq:Rn-log}
|\log(1+R_n(p'))-\log(1+R_n(p))|
\le2|R_n(p')-R_n(p)|
\le2C_{H,r}\frac{d}{n^3}.
\end{equation}
The exact factorization from \eqref{eq:finite-mobius} is
\[
n^{-h}\inj(H,K_a)=\Phi_H(p)(1+R_n(p)).
\]
Combining \eqref{eq:finite-leading} and \eqref{eq:Rn-log} yields \eqref{eq:finite-balancing}. If $m=0$, every $n$-vertex host has $\inj(H,G)=(n)_h$, and the assertion is immediate.
\end{proof}

\begin{corollary}\label{cor:discrete-balancing}
Let $B=e(T_r(n))-e(K_a)$. If $\max_i a_i\le10n/r$ and $h/r\le c$, then
\begin{equation}\label{eq:discrete-balancing}
\log\frac{\inj(H,T_r(n))}{\inj(H,K_a)}
\ge\frac{2mB}{n^2}
\left(1-C\frac{h}{r}-\frac{C_{H,r}}{n}\right).
\end{equation}
\end{corollary}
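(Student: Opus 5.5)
The plan is to iterate Theorem~\ref{thm:finite-balancing} along a sequence of single-vertex moves from $K_a$ to $T_r(n)$, and telescope the logarithms.

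At each step, choose a largest part $a_i$ and a smallest part $a_j$ with $a_j\le a_i-2$, and move one vertex from part $i$ to part $j$. Such a move never increases the maximum part size, so the hypothesis $\max_i a_i\le 10n/r$ is preserved along the whole sequence. A single move changes $\sum_k a_k^2$ by
\[
(a_i-1)^2+(a_j+1)^2-a_i^2-a_j^2=-2(a_i-a_j-1)=-2d.
\]
Since $e(K_a)=\tfrac12\bigl(n^2-\sum_k a_k^2\bigr)$, the edge count increases by exactly $d$. The process stops exactly when all parts differ by at most one, that is, at $T_r(n)$ up to isomorphism. It terminates because $\sum_k a_k^2$ strictly decreases. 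Therefore, if the moves have parameters $d_1,\dots,d_L$, then $\sum_\ell d_\ell=e(T_r(n))-e(K_a)=B$.

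Applying \eqref{eq:finite-balancing} to each move and summing gives
\[
\log\frac{\inj(H,T_r(n))}{\inj(H,K_a)}=\sum_\ell \log\frac{\inj(H,K^{(\ell)})}{\inj(H,K^{(\ell-1)})}\ge \frac{2m}{n^2}\sum_\ell d_\ell\Bigl(1-C\frac hr-\frac{C_{H,r}}n\Bigr),
\]
and the right-hand side is the claimed bound \eqref{eq:discrete-balancing}. If $B=0$, then $K_a$ is already balanced and the statement is trivial. If $m=0$, both sides vanish.

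The one point that needs care is uniformity. Theorem~\ref{thm:finite-balancing} holds "for all sufficiently large $n$", and we apply it to many intermediate profiles, so the threshold for $n$ and the constant $C_{H,r}$ must be independent of the particular profile. Inspecting the proof shows this is the case. The constant $C_{H,r}$ comes from bounding the second derivatives of $R_n$ uniformly over the convex set $D_r$, using the lower bound \eqref{eq:Phi-lower}. The condition $|R_n|\le 1/2$ is likewise uniform on $D_r$. All intermediate profiles lie in $D_r$, so one threshold $n_0(H,r)$ serves every step.
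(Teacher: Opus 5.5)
Your proposal is correct and is essentially the paper's proof. It uses the same single-vertex moves between parts differing by at least two, under which the maximum part size is preserved. The potential $\sum_k a_k^2$ drops by $2d_t$ at each move, so the edge gains $d_t$ sum to $B$, and the bounds \eqref{eq:finite-balancing} telescope. Your remark that $n_0$ and $C_{H,r}$ in Theorem~\ref{thm:finite-balancing} are uniform over all profiles in $D_r$ (via \eqref{eq:Phi-lower} and compactness) makes explicit a point the paper leaves implicit.
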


\begin{proof}
Starting from $a^{(0)}=a$, whenever $a_i^{(t)}\ge a_j^{(t)}+2$, move one vertex from part $i$ to part $j$ and let
\[
d_t=a_i^{(t)}-a_j^{(t)}-1.
\]
For the two parts involved, the larger part remains part $i$ after the move because
\[
a_j^{(t)}+1\le a_i^{(t)}-1.
\]
Its size decreases by one, while every other part is unchanged. Hence the largest part size never increases. Also,
\[
(a_i^{(t)}-1)^2+(a_j^{(t)}+1)^2-(a_i^{(t)})^2-(a_j^{(t)})^2
=-2d_t<0,
\]
so the integer potential $\sum_k(a_k^{(t)})^2$ decreases and the process terminates exactly at a balanced profile $a^{(N)}$.

At step $t$, the edge gain is
\[
e(K_{a^{(t+1)}})-e(K_{a^{(t)}})=d_t.
\]
Therefore
\[
\sum_{t=0}^{N-1}d_t=e(T_r(n))-e(K_a)=B.
\]
Applying Theorem~\ref{thm:finite-balancing} at every step and summing gives
\begin{align*}
\log\frac{\inj(H,T_r(n))}{\inj(H,K_a)}
&=\sum_{t=0}^{N-1}
\log\frac{\inj(H,K_{a^{(t+1)}})}{\inj(H,K_{a^{(t)}})}\\
&\ge\frac{2m}{n^2}
\left(1-C\frac{h}{r}-\frac{C_{H,r}}{n}\right)
\sum_{t=0}^{N-1}d_t,
\end{align*}
which yields \eqref{eq:discrete-balancing}.
\end{proof}

We conclude the section with a consequence for the proper-coloring probability. For a graph $J$, let
\[
p_r(J)=
\frac{|\{c:V(J)\to[r]:c\text{ proper}\}|}{r^{v(J)}}
=\Phi_J(1/r,\ldots,1/r).
\]

\begin{corollary}\label{cor:color-deletion}
There is an absolute $C$ such that, if $r\ge Cv(J)$ and $D\subseteq E(J)$, then
\begin{equation}\label{eq:color-deletion}
\log\frac{p_r(J-D)}{p_r(J)}
\le |D|\left(\frac1r+C\frac{v(J)}{r^2}\right).
\end{equation}
\end{corollary}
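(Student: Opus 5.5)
Since $p_r(J)=\Phi_J(u)$ with $u=(1/r,\ldots,1/r)$, I will prove \eqref{eq:color-deletion} by removing the edges of $D$ one at a time and bounding each step. The identity
\[
\log\frac{p_r(J-D)}{p_r(J)}=\sum_{i=1}^{|D|}\log\frac{p_r(J_i)}{p_r(J_{i-1})},
\]
where $J_0=J$ and $J_i$ is obtained from $J_{i-1}$ by deleting the $i$-th edge $e_i=u_iv_i$ of $D$, reduces the claim to a single-edge bound. That bound is
\[
\log\frac{p_r(J'-uv)}{p_r(J')}\le\frac1r+C\frac{v(J)}{r^2}
\]
for every spanning subgraph $J'\subseteq J$ and edge $uv\in E(J')$. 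All $J_i$ have the same vertex set, of size $h=v(J)$, so a constant uniform in $J'$ suffices.

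For the single-edge bound I would reuse the graphon argument of Theorem~\ref{thm:source-edge-switch}. Take the step graphon $W$ of the complete $r$-partite graph with equal parts, i.e.\ $W(x,y)=\1[\lceil rx\rceil\ne\lceil ry\rceil]$. Then $t_{J'}(W)=\Phi_{J'}(u)=p_r(J')$ exactly, since a homomorphism density to this graphon is the probability that a uniform independent coloring is proper. Here $U=1-W$ is the indicator of the diagonal blocks, so $q=\int U=1/r$ and every row integral equals $1/r$, giving $\rho=1/r$. Theorem~\ref{thm:source-edge-switch} applied to $J'$, valid when $h\rho=h/r\le c$, i.e.\ $r\ge h/c$, gives
\[
\log\frac{t_{J'-uv}(W)}{t_{J'}(W)}\le q+Cqh\rho=\frac1r+C\frac{h}{r^2}.
\]
Its constant is absolute, so it is uniform in $J'$. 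Summing over the $|D|$ steps yields \eqref{eq:color-deletion} with $C$ at least the larger of $1/c$ and the constant of Theorem~\ref{thm:source-edge-switch}.

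Theorem~\ref{thm:source-edge-switch} is stated for a general graph $H$ with its own $h$, and I apply it with $H=J'$, so $h=v(J')=v(J)$. Its hypotheses concern only $h\rho$, and the activity bounds use $\Delta(J')\le h$, so nothing else is needed. Positivity of $p_r(J')$ is automatic for $r\ge h$: colour the vertices with distinct colours.

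The one step needing care is identifying $\rho$ correctly with the paper's convention. Under the convention in \eqref{eq:q-rho}, $\rho$ is the supremum of the row integrals of $U$, and for this $W$ each row integral is exactly $1/r$, since $U(x,\cdot)$ is the indicator of the part containing $x$. This differs from the finite-graph quantity $\rho(G)$, which counts the vertex itself. Because the statement works directly with the graphon, no $n\to\infty$ transfer or collision error is involved and the bound is exact. Alternatively, one could run Theorem~\ref{thm:profile-balancing}-style polymer estimates with $z_u(S)=\chi(S)r^{1-|S|}$ directly via Lemma~\ref{lem:polymer-perturbation}, comparing activity systems that differ only on sets containing $\{u,v\}$. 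This would reproduce the same bound if the graphon identification were in doubt.
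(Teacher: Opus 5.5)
Your proposal is correct and follows the paper's own proof: the paper likewise takes the equal-block $r$-partite step graphon $W_r$, so that $t_J(W_r)=p_r(J)$ with $q=\rho=1/r$, deletes the edges of $D$ one at a time, applies Theorem~\ref{thm:source-edge-switch} to each intermediate graph $J_{\ell-1}$, and sums. Your remarks that the constant is uniform over the intermediate graphs and that the counts are positive for $r\ge v(J)$ spell out details the paper leaves implicit.
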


\begin{proof}
Partition $[0,1]$ into $r$ equal intervals and let $W_r$ be zero on each diagonal block and one on every off-diagonal block. If $x_v$ lies in block $c(v)$, then
\[
\prod_{uv\in E(J)}W_r(x_u,x_v)
=\1_{\{c\text{ is a proper }r\text{-coloring of }J\}}.
\]
Hence $t_J(W_r)=p_r(J)$. The parameters in \eqref{eq:q-rho} are $q=\rho=1/r$. Enumerate $D=\{e_1,\ldots,e_s\}$ and put $J_\ell=J-\{e_1,\ldots,e_\ell\}$. Theorem~\ref{thm:source-edge-switch} gives
\[
\log\frac{p_r(J_\ell)}{p_r(J_{\ell-1})}
\le\frac1r+C\frac{v(J)}{r^2}.
\]
Summing over all admissible $\ell$ yields \eqref{eq:color-deletion}.
\end{proof}

\section{Proof of Theorem~\ref{thm:chromatic}}\label{sec:chromatic-proof}

Let $H$ be a graph with
\[
h:=v(H),
\qquad
m:=e(H),
\qquad
\Delta:=\Delta(H),
\]
and let $P_H(x)$ be its chromatic polynomial. Recall from Lemma~\ref{lem:profile-polymer} that, for $S\subseteq V(H)$ with $|S|\ge2$,
\[
\chi_H(S)=
\sum_{\substack{A\subseteq E(H[S])\\(S,A)\text{ connected}}}
(-1)^{|A|},
\qquad
|\chi_H(S)|\le|\cT(H[S])|.
\]
For real $x\ne0$, define
\[
z_x(S):=\chi_H(S)x^{1-|S|}.
\]

\begin{lemma}\label{lem:chromatic-polymer}
For every real $x\ne0$,
\begin{equation}\label{eq:chromatic-polymer}
\frac{P_H(x)}{x^h}
=
\sum_{\substack{\Gamma:\,\Gamma\text{ is a family of pairwise}\\
                  \text{disjoint subsets of }V(H),~\\
                   \text{and }|S|\ge2\text{ for every }S\in\Gamma}}\left(
\prod_{S\in\Gamma}z_x(S)\right).
\end{equation}
The identity is an identity of Laurent polynomials in $x$.
\end{lemma}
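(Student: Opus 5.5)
The plan is to prove the identity first for every positive integer $x=q$, using the proper-coloring probability already treated in Lemma~\ref{lem:profile-polymer}. Then I will extend it to all real $x\ne0$ by a polynomial-identity argument.

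\emph{Integer values.} Fix an integer $q\ge1$ and take the uniform probability vector $p=(1/q,\ldots,1/q)$ on $[q]$, so $r=q$. Then $\Phi_H(p)=P_H(q)/q^h$, since each of the $q^h$ colorings has weight $q^{-h}$ and $\Phi_H$ counts the proper ones. Moreover $s_k(p)=q\cdot q^{-k}=q^{1-k}$. Hence $z_p(S)=\chi_H(S)q^{1-|S|}=z_q(S)$ for every $S$ with $|S|\ge2$. Substituting these values into \eqref{eq:profile-polymer} gives \eqref{eq:chromatic-polymer} at $x=q$. Nothing in the proof of Lemma~\ref{lem:profile-polymer} needs a lower bound on the number of colors, so this holds for every positive integer $q$.

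\emph{Extension to real $x$.} Multiply both sides by $x^h$. The left side becomes $P_H(x)$, a polynomial. On the right, a compatible family $\Gamma$ contributes
\[
x^h\prod_{S\in\Gamma}\chi_H(S)x^{1-|S|}=x^{\,h-\sum_{S\in\Gamma}(|S|-1)}\prod_{S\in\Gamma}\chi_H(S).
\]
The sets in $\Gamma$ are disjoint, so $\sum_{S\in\Gamma}(|S|-1)\le h-|\Gamma|\le h$. The exponent is therefore a nonnegative integer, and the right side is a genuine polynomial in $x$: it is a finite sum, since there are finitely many families of subsets of $V(H)$. The two polynomials agree at every positive integer, so they are identical. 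Dividing by $x^h$ gives the claimed identity of Laurent polynomials, and hence the identity for every real $x\ne0$.

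\emph{Alternative route and main obstacle.} One could instead repeat the inclusion--exclusion directly. Whitney's expansion gives $P_H(x)=\sum_{A\subseteq E(H)}(-1)^{|A|}x^{c(A)}$, where $c(A)$ is the number of components of $(V(H),A)$. Grouping $A$ by its nontrivial components $(S_j,A_j)$, as in Lemma~\ref{lem:graphon-polymer}, gives $c(A)=h-\sum_j(|S_j|-1)$, and the factorization follows immediately. The only step needing care is the passage from integers to reals, and it is routine. Its one subtlety is checking that no negative powers survive after multiplying by $x^h$, which is exactly the exponent count above.
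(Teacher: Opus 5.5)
Your argument is correct, but your main route differs from the paper's. The paper proves the lemma directly from the Fortuin--Kasteleyn (Whitney) expansion $P_H(x)=\sum_{A\subseteq E(H)}(-1)^{|A|}x^{k(A)}$. It groups each $A$ by its nontrivial components and uses $x^{k(A)-h}=\prod_i x^{1-|S_i|}$. That is exactly what you list as your alternative route, and it yields the Laurent-polynomial identity at once, with no interpolation step.

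Your primary route instead specializes Lemma~\ref{lem:profile-polymer} to the uniform vector on $[q]$. There $s_k(p)=q^{1-k}$ turns $z_p(S)$ into $z_q(S)$, and $\Phi_H(p)$ becomes $P_H(q)/q^h$. You then upgrade agreement at all positive integers to a polynomial identity. This buys self-containment: you need only the defining property of $P_H$ stated in the introduction and a lemma already proved in the paper, not an external citation. In substance the two proofs are the same, since the Whitney expansion is itself usually proved by this inclusion--exclusion plus interpolation; they are just organized differently. The cost of your route is the extra interpolation step.

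Your exponent count showing that $x^h$ times the right-hand side has no negative powers is correct, but it is not strictly needed. Multiplying both sides by any sufficiently large power of $x$ clears denominators, and two Laurent polynomials that agree at infinitely many points are equal.
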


\begin{proof}
The Fortuin--Kasteleyn formula \cite{FortuinKasteleyn} is
\[
P_H(x)=\sum_{A\subseteq E(H)}(-1)^{|A|}x^{k(A)},
\]
where $k(A)$ is the number of connected components of the spanning graph $(V(H),A)$, including isolated vertices. Let $S_1,\ldots,S_t$ be the vertex sets of its nontrivial connected components. Since the remaining $h-\sum_i|S_i|$ vertices are isolated,
\[
x^{k(A)-h}=\prod_{i=1}^t x^{1-|S_i|}.
\]
The sets $S_1,\ldots,S_t$ are pairwise disjoint. Conversely, after these sets are fixed, the choice of $A$ factors into an independent choice of a connected spanning edge set of $H[S_i]$ for every $i$. Summing $(-1)^{|A|}$ over each component produces $\chi_H(S_i)$. Dividing by $x^h$ and regrouping by the sets $S_i$ gives \eqref{eq:chromatic-polymer}.
\end{proof}

Define the local activity norm
\[
\delta_x:=
\max_{v\in V(H)}
\sum_{\substack{S\subseteq V(H)\\v\in S,\ |S|\ge2}}
|z_x(S)|e^{|S|}.
\]

\begin{lemma}\label{lem:chromatic-convergence}
If $x>4e\Delta$, then
\begin{equation}\label{eq:chromatic-delta}
\delta_x
\le e\sum_{k\ge1}\left(\frac{4e\Delta}{x}\right)^k
=\frac{4e^2\Delta/x}{1-4e\Delta/x}.
\end{equation}
In particular, $\delta_x\le1/4$ whenever $x\ge C_{\mathrm{chr}}\Delta$ for a sufficiently large absolute constant $C_{\mathrm{chr}}$.
\end{lemma}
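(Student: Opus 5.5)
Fix a vertex $v\in V(H)$ and group the sets $S\ni v$ with $|S|\ge2$ by their size $s=|S|$. For each such $S$ we have $|z_x(S)|=|\chi_H(S)|\,x^{1-s}\le|\cT(H[S])|\,x^{1-s}$ by the bound recorded after Lemma~\ref{lem:profile-polymer}; this is Lemma~\ref{lem:tree-bound} with unit weights. Hence
\[
\sum_{\substack{S\ni v\\|S|=s}}|z_x(S)|e^{s}\le e^{s}x^{1-s}\,\#\{(S,T): v\in S,\ |S|=s,\ T\in\cT(H[S])\}.
\]
Lemma~\ref{lem:tree-counts}(1) bounds the number of pairs $(S,T)$ with $v\in S$ by $(4\Delta)^{s-1}$. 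Substituting, the size-$s$ contribution is at most $e^{s}(4\Delta)^{s-1}x^{1-s}=e\,(4e\Delta/x)^{s-1}$.

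Sum over $s\ge2$ and put $k=s-1\ge1$. This gives
\[
\sum_{S\ni v}|z_x(S)|e^{|S|}\le e\sum_{k\ge1}(4e\Delta/x)^k.
\]
The series converges precisely when $x>4e\Delta$. Summing the geometric series gives $\frac{4e^2\Delta/x}{1-4e\Delta/x}$. The bound is uniform in $v$, so taking the maximum over $v$ yields \eqref{eq:chromatic-delta}.

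For the final assertion, take $x\ge C_{\mathrm{chr}}\Delta$ with, say, $C_{\mathrm{chr}}\ge 8e$. Then $4e\Delta/x\le 4e/C_{\mathrm{chr}}\le1/2$, and so $\delta_x\le 8e^2/C_{\mathrm{chr}}$. This is at most $1/4$ once $C_{\mathrm{chr}}\ge 32e^2$. If $\Delta=0$, every $\chi_H(S)$ vanishes, since $H[S]$ has no edges and so no connected spanning subgraph on $|S|\ge2$ vertices; then $\delta_x=0$ trivially.

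There is no real obstacle here. The only point needing care is that the tree-counting bound of Lemma~\ref{lem:tree-counts}(1) applies to pairs $(S,T)$. Since $|\chi_H(S)|$ is bounded by the number of spanning trees of $H[S]$, summing over $S$ counts exactly such pairs, so the reduction is immediate.
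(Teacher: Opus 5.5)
Your proposal is correct and follows the paper's proof: you bound $|\chi_H(S)|$ by $|\cT(H[S])|$, apply Lemma~\ref{lem:tree-counts}(1) for each size $s$ to get the contribution $e(4e\Delta/x)^{s-1}$, and sum the resulting geometric series. Your explicit choice $C_{\mathrm{chr}}\ge 32e^2$ and your remark on the case $\Delta=0$ are valid additions that the paper leaves implicit.
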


\begin{proof}
For the contribution of the sets of size $s$ to $\delta_x$, Lemma~\ref{lem:tree-counts}(1) gives
\[
(4\Delta)^{s-1}x^{1-s}e^s
=e\left(\frac{4e\Delta}{x}\right)^{s-1}.
\]
Summing over all integers $s \ge 2$ yields \eqref{eq:chromatic-delta}.
\end{proof}

\begin{proof}[Proof of Theorem~\ref{thm:chromatic}]
Use polymers $S\subseteq V(H)$, $|S|\ge2$, with activities $z_x(S)$ and compatibility given by disjointness. Thus a compatible configuration is a family of pairwise vertex-disjoint source sets, and the activity of $S$ is the signed total contribution of all connected spanning edge sets of $H[S]$, multiplied by $x^{1-|S|}$. Lemma~\ref{lem:chromatic-polymer} says that the resulting partition function is exactly $P_H(x)/x^h$.

If $x\ge C_{\mathrm{chr}}\Delta$, then Lemma~\ref{lem:chromatic-convergence} gives $\delta_x\le1/4$, so Lemma~\ref{lem:KP} applies. Thus the partition function is nonzero; being real, continuous, and tending to $1$ as $x\to\infty$, it is positive and its real logarithm is defined.

Applying Lemma~\ref{lem:differential-polymer} gives
\begin{equation}\label{eq:chromatic-differential}
\left|
\frac{\dd}{\dd x}\log\!\left(\frac{P_H(x)}{x^h}\right)
-\sum_{|S|\ge2}z'_x(S)
\right|
\le
\sum_{|S|\ge2}|z'_x(S)|
\bigl(e^{2\delta_x|S|}-1\bigr).
\end{equation}
We estimate separately the two-vertex polymers and the polymers with at least three vertices.

If $|S|=2$, then $\chi_H(S)=-1$ when $S$ is an edge and is zero otherwise. Hence $z_x(S)=-x^{-1}$ for each of the $m$ edges, and therefore
\begin{equation}\label{eq:chromatic-two-vertex}
\sum_{|S|=2}z'_x(S)=\frac{m}{x^2}.
\end{equation}
For $|S|=s$,
\[
|z'_x(S)|=(s-1)|\chi_H(S)|x^{-s}.
\]
Writing $k=s-2$ and applying Lemma~\ref{lem:tree-counts}(2), we obtain
\begin{align}
\sum_{|S|\ge3}|z'_x(S)|
&\le\frac{8m}{x^2}\sum_{k\ge1}(k+1)
\left(\frac{4\Delta}{x}\right)^k,\label{eq:chromatic-high-plain}\\
\sum_{|S|\ge3}|z'_x(S)|e^{|S|}
&\le\frac{8me^2}{x^2}\sum_{k\ge1}(k+1)
\left(\frac{4e\Delta}{x}\right)^k.\label{eq:chromatic-high-weighted}
\end{align}
Both right-hand sides are $O(m\Delta/x^3)$ when $x\ge C_{\mathrm{chr}}\Delta$.

The two-vertex contribution to the right-hand side of \eqref{eq:chromatic-differential} is
\[
\frac{m}{x^2}(e^{4\delta_x}-1)
=O\!\left(\frac{m\Delta}{x^3}\right),
\]
since \eqref{eq:chromatic-delta} gives $\delta_x=O(\Delta/x)$. For $|S|\ge3$, the condition $\delta_x\le1/4$ shows that
\[
e^{2\delta_x|S|}-1\le e^{|S|},
\]
so \eqref{eq:chromatic-high-weighted} bounds the remaining correction by $O(m\Delta/x^3)$. Combining these estimates with \eqref{eq:chromatic-two-vertex} and \eqref{eq:chromatic-high-plain}, there is an absolute constant $K>0$ such that
\[
\left|
\frac{\dd}{\dd x}\log\!\left(\frac{P_H(x)}{x^h}\right)-\frac{m}{x^2}
\right|
\le K\frac{m\Delta}{x^3}.
\]
Enlarge the absolute constant $C$ so that $C\ge100K$. For $x\ge C\Delta$, the error is at most $m/(2x^2)$. The logarithmic derivative is positive, and the positivity of $P_H(x)/x^h$ implies strict monotonicity.
\end{proof}

\begin{corollary}\label{cor:adjacent-colors}
There exists an absolute constant $C>0$ such that, if $r\ge2$ and $r\ge C\Delta(J)$, then
\begin{equation}\label{eq:adjacent-colors}
\left|
\log\frac{p_{r-1}(J)}{p_r(J)}
+\frac{e(J)}{r(r-1)}
\right|
\le C\frac{e(J)\Delta(J)}{r^3}.
\end{equation}
\end{corollary}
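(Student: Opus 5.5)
We derive the corollary by integrating the logarithmic-derivative estimate of Theorem~\ref{thm:chromatic} over the interval $[r-1,r]$. Since $p_q(J)=P_J(q)/q^{v(J)}$, we have
\[
\log\frac{p_{r-1}(J)}{p_r(J)}=-\int_{r-1}^{r}\frac{\dd}{\dd x}\log\!\left(\frac{P_J(x)}{x^{v(J)}}\right)\dd x .
\]
If $e(J)=0$, both sides of \eqref{eq:adjacent-colors} vanish, so we assume $J$ has an edge. Let $C_0$ and $K$ denote the constants of Theorem~\ref{thm:chromatic}. We choose $C$ so large that $r\ge C\Delta(J)$ and $r\ge2$ imply $r-1\ge C_0\Delta(J)$. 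For example, $C\ge 2C_0$ suffices, because $r-1\ge r/2$ when $r\ge2$. Then the whole interval $[r-1,r]$ lies in $[C_0\Delta(J),\infty)$. On this interval $P_J(x)/x^{v(J)}$ is positive and its logarithm is smooth, so the fundamental theorem of calculus applies.

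Substituting the estimate of Theorem~\ref{thm:chromatic} gives
\[
\log\frac{p_{r-1}(J)}{p_r(J)}=-\int_{r-1}^{r}\frac{e(J)}{x^2}\dd x+E,
\qquad |E|\le K e(J)\Delta(J)\int_{r-1}^{r}x^{-3}\dd x.
\]
The main term is exact:
\[
\int_{r-1}^r x^{-2}\dd x=\frac1{r-1}-\frac1r=\frac1{r(r-1)}.
\]
For the error, we use $x\ge r-1\ge r/2$ on the interval, which gives $\int_{r-1}^r x^{-3}\dd x\le 8/r^3$. Hence $|E|\le 8K\,e(J)\Delta(J)/r^3$. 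Enlarging $C$ so that also $C\ge 8K$ gives \eqref{eq:adjacent-colors}.

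The only delicate point is keeping the left endpoint $r-1$ inside the range where Theorem~\ref{thm:chromatic} applies, since the hypothesis is stated at $r$. The case $r\ge2$ with $r-1\ge r/2$ handles this uniformly. Positivity of $p_{r-1}(J)$, needed for the logarithm, follows from the same theorem's positivity conclusion.
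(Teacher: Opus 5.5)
Your proposal is correct and follows the paper's own proof. Like the paper, you integrate the logarithmic-derivative estimate of Theorem~\ref{thm:chromatic} over $[r-1,r]$, after enlarging $C$ so that the whole interval lies in the admissible range; your explicit choices $C\ge 2C_0$ (via $r-1\ge r/2$) and $C\ge 8K$ just spell out the paper's ``after enlarging $C$'' step.
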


\begin{proof}
The assertion is trivial when $e(J)=0$. Otherwise, after enlarging $C$, Theorem~\ref{thm:chromatic} applies throughout $[r-1,r]$. Integrating its logarithmic derivative gives
\[
\log\frac{p_r(J)}{p_{r-1}(J)}
=\int_{r-1}^r
\frac{\dd}{\dd x}\log\!\left(\frac{P_J(x)}{x^{v(J)}}\right)\dd x
=\frac{e(J)}{r(r-1)}
+O\!\left(\frac{e(J)\Delta(J)}{r^3}\right),
\]
which is equivalent to \eqref{eq:adjacent-colors}.
\end{proof}

\section{Proof of Theorem~\ref{thm:main}}\label{sec:main-proof}

\subsection{The minimum-degree bound}

Recall that $h=v(H)$ and $m=e(H)$, and let $C_T$ be an absolute constant to be fixed below. We prove Theorem~\ref{thm:main} by induction on $h$ in the following slightly stronger form. For every graph $J$ with $v(J)\le h$, every integer $r\ge C_Th$, and all sufficiently large $n$, every $n$-vertex $K_{r+1}$-free graph $F$ satisfies
\[
\inj(J,F)\le\inj(J,T_r(n)).
\]
When $e(J)>0$, equality holds only for $F\cong T_r(n)$. The case $h\le2$ follows from Tur\'an's theorem.

By \eqref{eq:isolated}, isolated vertices may be removed. We may therefore assume that $H$ has order $h\ge3$, contains no isolated vertices, and has $m>0$ edges. Fix $r\ge C_Th$, assume the assertion for graphs of order less than $h$, and let $G$ maximize $\inj(H,G)$ among all $n$-vertex $K_{r+1}$-free graphs.

By Corollary~\ref{cor:color-deletion}, if $J$ has at most $h$ vertices and $D\subseteq E(J)$, then
\[
\log\frac{p_r(J-D)}{p_r(J)}
\le |D|\left(\frac1r+C\frac{h}{r^2}\right).
\]
By Corollary~\ref{cor:adjacent-colors}, if $J$ has at most $h$ vertices, then
\[
\left|
\log\frac{p_{r-1}(J)}{p_r(J)}
+\frac{e(J)}{r(r-1)}
\right|
\le C\frac{e(J)h}{r^3}.
\]
We shall also use Corollaries~\ref{cor:finite-switching} and \ref{cor:discrete-balancing} in Subsection~\ref{subsec:completion}.

For $v\in V(G)$, let $I_G(v)$ denote the number of injective homomorphisms $H\to G$ whose image contains $v$.

\begin{lemma}\label{lem:cloning}
Uniformly in $v\in V(G)$,
\begin{equation}\label{eq:cloning}
I_G(v)\ge h p_r(H)n^{h-1}-O_{H,r}(n^{h-2}).
\end{equation}
\end{lemma}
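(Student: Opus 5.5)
This is the standard symmetrization (cloning) argument. Suppose some vertex $v$ has $I_G(v)$ much smaller than the claimed bound. Let $w$ be a vertex maximizing $I_G(w)$. Delete $v$ and add a clone $v'$ of $w$: same neighborhood as $w$, no edge $vw'$ to $w$. The resulting graph $G'$ is still $K_{r+1}$-free, since a clique can contain at most one of $w,v'$.

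\textbf{Comparing the counts.} Write $I_G(v,w)$ for the number of injective copies containing both $v$ and $w$; this is $O_H(n^{h-2})$. Then
\[
\inj(H,G')\ge \inj(H,G)-I_G(v)+I_{G}(w)-O_H(n^{h-2}).
\]
The copies through $v$ are lost. Each copy through $w$ that avoids $v$ yields a new copy through $v'$ obtained by replacing $w$ with $v'$. These new copies are distinct from the old ones because they use $v'$. The only corrections come from copies using both $v$ and $w$, or both $w$ and $v'$.

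Maximality of $G$ therefore gives $I_G(v)\ge \max_w I_G(w)-O_H(n^{h-2})$. So it suffices to lower-bound the average of $I_G(w)$.

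\textbf{Averaging.} We have $\sum_w I_G(w)=h\,\inj(H,G)$, hence
\[
\max_w I_G(w)\ge h\,\inj(H,G)/n.
\]
Since $G$ is extremal, $\inj(H,G)\ge\inj(H,T_r(n))$.

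\textbf{Counting in the Tur\'an graph.} By \eqref{eq:finite-mobius} with $p$ the balanced profile, where $p_i=\lceil n/r\rceil/n$ or $\lfloor n/r\rfloor/n$,
\[
\inj(H,T_r(n))=n^h\Phi_H(p)+O_{H,r}(n^{h-1}).
\]
Moreover $\Phi_H(p)=\Phi_H(1/r,\dots,1/r)+O_{H,r}(n^{-1})=p_r(H)+O_{H,r}(n^{-1})$, since $\Phi_H$ is a fixed polynomial and $|p_i-1/r|\le 1/n$. Combining,
\[
I_G(v)\ge h\,p_r(H)\,n^{h-1}-O_{H,r}(n^{h-2}).
\]

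\textbf{Main obstacle.} The delicate point is the bookkeeping in the cloning inequality. One must check that the map from copies through $w$ (avoiding $v$) to copies through $v'$ is injective. One must also check that the copies counted with error, namely those containing both $v$ and $w$, or copies of $G'$ using both $w$ and $v'$, number $O_H(n^{h-2})$ uniformly in $v$. This holds since fixing two vertices of the image leaves at most $h^2 n^{h-2}$ maps.
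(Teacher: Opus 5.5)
Your proposal is correct and follows essentially the same cloning-plus-averaging argument as the paper: clone a maximizer of $I_G(\cdot)$ onto $v$, absorb the copies using both vertices into an $O_H(n^{h-2})$ error, average via $\sum_v I_G(v)=h\inj(H,G)$, and compare with $\inj(H,T_r(n))$. The differences are cosmetic. You delete $v$ and add a new clone $v'$ rather than rewiring $v$, and you estimate $\inj(H,T_r(n))$ through the M\"obius identity \eqref{eq:finite-mobius} instead of the collision bound \eqref{eq:collision}.
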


\begin{proof}
Every injective copy is counted at each of its $h$ image vertices, so
\[
\sum_{v\in V(G)}I_G(v)=h\inj(H,G).
\]
Choose $x\in V(G)$ with maximum $I_G(x)$. For a fixed $v\ne x$, replace $v$ by a copy of $x$: delete every edge incident with $v$, set
\[
N(v)=N_G(x)\setminus\{v\},
\]
and keep $xv$ absent. The resulting graph is still $K_{r+1}$-free, since a clique containing the new vertex $v$ can be transformed into a clique of the same order by replacing $v$ with $x$.

Every copy using $x$ but not $v$ transfers to a copy using the new vertex $v$. The only possible discrepancy comes from injective maps whose images contain both $x$ and $v$. By \eqref{eq:collision}, or directly by choosing the ordered preimages of $x,v$ and then the remaining images, their number is at most
\[
h(h-1)(n-2)_{h-2}=O_H(n^{h-2}).
\]
Extremality of $G$ therefore gives
\begin{equation}\label{eq:cloning-intermediate}
I_G(v)
\ge I_G(x)-O_H(n^{h-2})
\ge\frac{h}{n}\inj(H,G)-O_H(n^{h-2}).
\end{equation}
The Tur\'an graph is admissible, so $\inj(H,G)\ge\inj(H,T_r(n))$. If its part proportions are $q_i=|A_i|/n$, then $q_i=1/r+O_r(n^{-1})$. Since $\Phi_H$ is a fixed polynomial,
\[
\homc(H,T_r(n))
=n^h\Phi_H(q_1,\ldots,q_r)
=p_r(H)n^h+O_{H,r}(n^{h-1}).
\]
Applying \eqref{eq:collision} with $J=H$ and $F=T_r(n)$ gives
\begin{equation}\label{eq:turan-inj-asymptotic}
\inj(H,T_r(n))=p_r(H)n^h+O_{H,r}(n^{h-1}).
\end{equation}
Substituting \eqref{eq:turan-inj-asymptotic} into \eqref{eq:cloning-intermediate} yields \eqref{eq:cloning}.
\end{proof}

We now derive an upper bound on the same rooted count. Fix $v\in V(G)$ and define
\begin{equation}\label{eq:A-B-eta}
A=N_G(v),
\qquad
B=V(G)\setminus(A\cup\{v\}),
\qquad
\eta=\frac{n-d_G(v)}{n}.
\end{equation}
Then
\begin{equation}\label{eq:A-B-sizes}
|A|=(1-\eta)n,
\qquad
|B|=\eta n-1.
\end{equation}
The graph $G[A]$ is $K_r$-free and $G[B]$ is $K_{r+1}$-free.

For $u\in V(H)$, let $I_{G,u}(v)$ count the injective homomorphisms $\varphi:H\to G$ satisfying $\varphi(u)=v$. Set
\[
\overline N_H(u)=V(H)\setminus(N_H(u)\cup\{u\}).
\]
Every map counted by $I_{G,u}(v)$ determines a partition
\[
V(H)\setminus\{u\}=S\dotcup T,
\]
where $S$ is the set of vertices of $H$ embedded in $B$ and $T$ is the set of vertices of $H$ embedded in $A$. Since every neighbor of $u$ must be embedded in $A$, necessarily $S\subseteq\overline N_H(u)$ and
\[
T=V(H)\setminus(S\cup\{u\}).
\]
For a fixed $S$, the restrictions of $\varphi$ to $S$ and $T$ have disjoint ranges because $A\cap B=\varnothing$. If we ignore the edge conditions between $S$ and $T$, we obtain
\begin{equation}\label{eq:rooted-partition-upper}
I_{G,u}(v)
\le
\sum_{S\subseteq\overline N_H(u)}
\inj(H[S],G[B])\inj(H[T],G[A]).
\end{equation}

The induction hypothesis has the following uniform consequence. If $J$ has fewer than $h$ vertices and $q\ge C_Tv(J)$, then
\begin{equation}\label{eq:uniform-induction}
\inj(J,F)
\le p_q(J)N^{v(J)}+C_{J,q}(N+1)^{v(J)-1}
\end{equation}
for every $N$-vertex $K_{q+1}$-free graph $F$. For the finitely many host orders below the induction threshold, the constant $C_{J,q}$ is enlarged. The empty graph contributes $1$ and needs no error term.

We apply \eqref{eq:uniform-induction} to $G[B]$ with $q=r$ and to $G[A]$ with $q=r-1$. This is legitimate because $r-1\ge C_T(h-1)\ge C_Tv(J)$ after taking $C_T\ge1$. Equations \eqref{eq:rooted-partition-upper} and \eqref{eq:A-B-sizes} give
\begin{equation}\label{eq:rooted-Ru}
I_{G,u}(v)\le n^{h-1}R_u(\eta)+O_{H,r}(n^{h-2}),
\end{equation}
where
\begin{equation}\label{eq:Ru-definition}
R_u(\eta)=
\sum_{S\subseteq\overline N_H(u)}
\eta^{|S|}(1-\eta)^{h-1-|S|}
 p_r(H[S])p_{r-1}(H[T]).
\end{equation}

\begin{lemma}\label{lem:rooted-coloring}
There exists an absolute constant $C$ such that, if $r\ge Ch$, then, for $0\le\eta<1$,
\begin{equation}\label{eq:rooted-coloring}
\log\frac{R_u(\eta)}{p_r(H)}
\le
 d_H(u)\left(\log(1-\eta)+\frac1r+C\frac{h}{r^2}\right)
+C\frac{\eta m}{r}+C\frac{mh}{r^3}.
\end{equation}
For $\eta=1$, the assertion is interpreted as the limit from below.
\end{lemma}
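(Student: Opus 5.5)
Write $d=d_H(u)$ and $T=V(H)\setminus(S\cup\{u\})$. The plan is to bound each summand of $R_u(\eta)$ by comparing it with $p_r(H)$, then sum the resulting binomial-type series.

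\textbf{Step 1: compare $p_r(H[S])\,p_{r-1}(H[T])$ with $p_r(H)$.} For a fixed $S\subseteq\overline N_H(u)$, let $J=H-u$. Let $D_S$ be the set of edges of $J$ between $S$ and $T$. Then $J-D_S=H[S]\dotcup H[T]$, and $p_r$ is multiplicative over disjoint unions.

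First, Corollary~\ref{cor:adjacent-colors} applied to $H[T]$ (which has maximum degree at most $h$) gives
\[
\log\frac{p_{r-1}(H[T])}{p_r(H[T])}\le -\frac{e(H[T])}{r(r-1)}+C\frac{e(H[T])h}{r^3}\le C\frac{mh}{r^3},
\]
since the main term is nonpositive.

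Second, Corollary~\ref{cor:color-deletion} applied to $J$ with $D=D_S$ gives
\[
\log\frac{p_r(J-D_S)}{p_r(J)}\le |D_S|\Bigl(\frac1r+C\frac hr^2\Bigr).
\]
Third, adding back the vertex $u$ together with its $d$ edges is handled by Corollary~\ref{cor:color-deletion} applied to $H$ with $D$ the set of edges at $u$. Since $p_r(H-D)=p_r(J)$ (the vertex $u$ becomes isolated),
\[
\log\frac{p_r(J)}{p_r(H)}\le d\Bigl(\frac1r+C\frac h{r^2}\Bigr).
\]
Combining the three bounds,
\[
\frac{p_r(H[S])\,p_{r-1}(H[T])}{p_r(H)}\le \exp\Bigl(\bigl(d+|D_S|\bigr)\Bigl(\frac1r+\frac{Ch}{r^2}\Bigr)+C\frac{mh}{r^3}\Bigr).
\]

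\textbf{Step 2: sum over $S$.} Every neighbour of $u$ lies in $T$, so $(1-\eta)^{h-1-|S|}=(1-\eta)^d(1-\eta)^{|\overline N_H(u)|-|S|}$. Hence
\[
\frac{R_u(\eta)}{p_r(H)}\le (1-\eta)^d e^{d(1/r+Ch/r^2)+Cmh/r^3}\sum_{S\subseteq \overline N}\eta^{|S|}(1-\eta)^{|\overline N|-|S|}e^{\beta|D_S|},
\]
where $\overline N=\overline N_H(u)$ and $\beta=2/r$, valid once $r\ge Ch$. The remaining sum is exactly $\mathbb E\, e^{\beta|D_S|}$ for a random subset $S$ in which each vertex of $\overline N$ is included independently with probability $\eta$. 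The goal is to show $\log \mathbb E\, e^{\beta |D_S|}\le C\eta m/r$, which yields \eqref{eq:rooted-coloring}.

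\textbf{Step 3: the main obstacle, bounding $\mathbb E\,e^{\beta|D_S|}$.} Each edge of $D_S$ has at least one endpoint in $S$. So $|D_S|\le\sum_{w\in S}d_H(w)=\sum_{w\in \overline N}\xi_w d_H(w)$, where the $\xi_w$ are independent Bernoulli$(\eta)$ variables. Independence then gives
\[
\mathbb E\,e^{\beta|D_S|}\le\prod_w\bigl(1+\eta(e^{\beta d_H(w)}-1)\bigr).
\]
Since $\beta d_H(w)\le 2h/r\le 1$ for $C\ge2$, we have $e^{\beta d_H(w)}-1\le 2\beta d_H(w)$. Using $1+x\le e^x$, the logarithm of the product is at most $2\beta\eta\sum_w d_H(w)\le 8\eta m/r$.

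Crude alternatives such as $|D_S|\le m$ lose the factor $\eta$. The $\eta$ is exactly what must be retained, which is why the Bernoulli independence step is the key point. For $\eta=1$ the bound follows by taking the limit, since both sides are continuous in $\eta$ on $[0,1)$ and the left-hand side tends to $-\infty$ when $d\ge1$.
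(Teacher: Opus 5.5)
Your proof is correct and follows essentially the same route as the paper. You bound each summand using Corollary~\ref{cor:color-deletion} and Corollary~\ref{cor:adjacent-colors}, write $R_u(\eta)$ as $(1-\eta)^{d_H(u)}$ times an expectation over a Bernoulli$(\eta)$ subset of $\overline N_H(u)$, and control the moment generating function of the crossing-edge count through $c(\mathbf S)\le\sum_x d_H(x)X_x$ and independence. The only cosmetic difference is that the paper deletes the edges at $u$ and the $S$--$T$ edges in a single application of Corollary~\ref{cor:color-deletion}, whereas you delete them in two stages through $H-u$; this gives the same bound.
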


\begin{proof}
Fix $S\subseteq\overline N_H(u)$ and set
\[
T=V(H)\setminus(S\cup\{u\}),
\qquad
d=d_H(u),
\qquad
c(S)=e_H(S,T).
\]
Let $D$ consist of the $d$ edges incident with $u$ and the $c(S)$ edges joining $S$ to $T$. Then
\[
H-D=H[S]\dotcup H[T]\dotcup K_1.
\]
The proper-coloring probabilities multiply over disjoint unions. Hence Corollary~\ref{cor:color-deletion} gives the complete chain
\begin{equation}\label{eq:rooted-delete-chain}
p_r(H[S])p_r(H[T])
=p_r(H-D)
\le p_r(H)\exp\!\left((d+c(S))\left(\frac1r+C\frac{h}{r^2}\right)\right).
\end{equation}
According to Corollary~\ref{cor:adjacent-colors}, we have
\begin{equation}\label{eq:rooted-adjacent}
\log\frac{p_{r-1}(H[T])}{p_r(H[T])}
\le-\frac{e(H[T])}{r(r-1)}+C\frac{e(H[T])h}{r^3}
\le C\frac{mh}{r^3}.
\end{equation}
Define
\[
\alpha=\frac1r+C\frac{h}{r^2}.
\]
Combining \eqref{eq:rooted-delete-chain} and \eqref{eq:rooted-adjacent}, we obtain
\begin{equation}\label{eq:rooted-product-bound}
p_r(H[S])p_{r-1}(H[T])
\le p_r(H)\exp\!\left(\alpha(d+c(S))+C\frac{mh}{r^3}\right).
\end{equation}

Let $\mathbf S$ be a random subset of $\overline N_H(u)$ obtained by independently placing each $x\in\overline N_H(u)$ in $\mathbf S$ with probability $\eta$. Write
\[
X_x=\1_{\{x\in\mathbf S\}}.
\]
The definition \eqref{eq:Ru-definition} may then be written as
\[
R_u(\eta)
=(1-\eta)^d
\E\!\bigl[p_r(H[\mathbf S])p_{r-1}(H[V(H)\setminus(\mathbf S\cup\{u\})])\bigr].
\]
Indeed, all $d$ neighbors of $u$ belong to $T$, and the remaining $h-1-d$ vertices are independently classified into $S$ and $T$. By \eqref{eq:rooted-product-bound},
\begin{equation}\label{eq:Ru-expectation}
\frac{R_u(\eta)}{p_r(H)}
\le(1-\eta)^d
\exp\!\left(\alpha d+C\frac{mh}{r^3}\right)
\E e^{\alpha c(\mathbf S)}.
\end{equation}
Every edge counted by $c(\mathbf S)$ has one endpoint $x\in\mathbf S$, so
\[
c(\mathbf S)\le\sum_{x\in\overline N_H(u)}d_H(x)X_x.
\]
Independence gives
\[
\E e^{\alpha c(\mathbf S)}
\le\prod_{x\in\overline N_H(u)}
\bigl(1-\eta+\eta e^{\alpha d_H(x)}\bigr),
\]
and hence
\[
\log\E e^{\alpha c(\mathbf S)}
\le\sum_{x\in\overline N_H(u)}
\log\bigl(1+\eta(e^{\alpha d_H(x)}-1)\bigr).
\]
Since $\alpha h=O(h/r)$ is small, $e^{\alpha d_H(x)}-1\le C\alpha d_H(x)$. Therefore
\begin{equation}\label{eq:mgf-bound}
\log\E e^{\alpha c(\mathbf S)}
\le C\eta\alpha\sum_{x\in V(H)}d_H(x)
\le C\frac{\eta m}{r}.
\end{equation}
Taking logarithms in \eqref{eq:Ru-expectation} and using \eqref{eq:mgf-bound} yields \eqref{eq:rooted-coloring}. If $\eta=1$, then $R_u(1)=0$ because $d_H(u)>0$, which agrees with the limiting interpretation.
\end{proof}

\begin{lemma}\label{lem:min-degree}
If the absolute constant $C_T$ is sufficiently large, then every extremal graph $G$ above satisfies
\begin{equation}\label{eq:min-degree}
\delta(G)>\left(1-\frac8r\right)n.
\end{equation}
Equivalently,
\begin{equation}\label{eq:rho-extremal}
\rho(G)<\frac8r.
\end{equation}
\end{lemma}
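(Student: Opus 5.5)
The approach is to compare the lower bound of Lemma~\ref{lem:cloning} with the upper bound \eqref{eq:rooted-Ru} combined with Lemma~\ref{lem:rooted-coloring}, applied at a vertex $v$ of minimum degree. Fix such a $v$ and let $\eta=(n-d_G(v))/n=\rho(G)$. Summing the rooted counts over the root gives $I_G(v)=\sum_{u\in V(H)}I_{G,u}(v)$. Hence \eqref{eq:rooted-Ru} yields
\[
I_G(v)\le n^{h-1}\sum_{u}R_u(\eta)+O_{H,r}(n^{h-2}).
\]
Together with \eqref{eq:cloning}, and after dividing by $n^{h-1}p_r(H)$, this gives, for large $n$,
\[
h-o(1)\le\sum_{u\in V(H)}\frac{R_u(\eta)}{p_r(H)}.
\]
The plan is to show that the right-hand side is strictly less than $h$ whenever $\eta\ge 8/r$, which is a contradiction.

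Write $d=d_H(u)\ge1$, using that $H$ has no isolated vertices. By Lemma~\ref{lem:rooted-coloring}, each term is at most
\[
\exp\Bigl(d\bigl(\log(1-\eta)+\tfrac1r+C\tfrac{h}{r^2}\bigr)+C\tfrac{\eta m}{r}+C\tfrac{mh}{r^3}\Bigr).
\]
The first bracket is at most $-\eta+\frac{1}{r}(1+Ch/r)\le -\eta+\frac{1.1}{r}$ once $C_T$ is large. For $\eta\ge8/r$ this is at most $-\frac{7}{8}\eta$. The danger is the term $C\eta m/r$, since $m$ can be of order $h^2$ and then $\eta m/r$ is not small compared with $d\eta$. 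I would therefore not bound the terms one at a time. Instead, I would keep the mgf bound from the proof of Lemma~\ref{lem:rooted-coloring} in its sharper form $C\eta\alpha\sum_{x\in\overline N_H(u)}d_H(x)$ and average over $u$. One could use convexity, or work with the arithmetic mean directly, as follows.

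The cleaner route is to redo the estimate inside the proof. The mgf exponent is at most $C\eta\,\alpha\,\Delta\cdot h\le C\eta h^2/r$, which is too big. A different handle is needed: use $c(\mathbf S)\le\sum_{x\in\mathbf S}d_H(x)$, but then use the multiplicative factor $(1-\eta)^{d_H(x)}$-type savings from vertices in $T$. That becomes circular. So I would rather bound the exponent using $\alpha d_H(x)\le \alpha h\le C/C_T$, which gives
\[
\log\E e^{\alpha c(\mathbf S)}\le 2\eta\alpha\sum_x d_H(x)\le 4\eta m\alpha.
\]
Each term is then at most $\exp(-\tfrac78 d\eta+4\eta m/r+Cmh/r^3)$.

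The key step is to sum over $u$. Using $e^{-\frac78 d\eta}\le 1-c\min(d\eta,1)$ and $\sum_u d=2m$, the saving in the sum is of order $\min(m\eta,h)$, while the loss is a factor $e^{4\eta m/r}$ applied to all $h$ terms, that is, of order $h\eta m/r$. Since $r\ge C_Th$, the loss $h\eta m/r\le \eta m/C_T$ is dominated by the saving $c\,m\eta$ when $m\eta\lesssim h$. In the regime $m\eta\gtrsim h$, i.e.\ $\eta$ large, I would instead compare using the cruder $(1-\eta)^{d}$ against $e^{4\eta m/r}\le e^{4\eta h/C_T\cdot(m/h^2)\cdot h}$. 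This regime is the main obstacle, and I would try to replace $m$ by $\sum_{x\in\overline N_H(u)}d_H(x)$ and the correct weighting to obtain a bound with total exponent $\le -c\eta\sum_u d_H(u)$. The term $Cmh/r^3\le Cm/(C_Tr^2)$ is absorbed similarly. This gives $\sum_u R_u/p_r(H)\le h(1-c')$ for some constant $c'>0$, which is the required contradiction and yields \eqref{eq:min-degree}.
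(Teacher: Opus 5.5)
There is a genuine gap. You bound every rooted count by $I_{G,u}(v)\le n^{h-1}R_u(\eta)+O_{H,r}(n^{h-2})$ and aim to prove $\sum_u R_u(\eta)/p_r(H)<h$ for all $\eta\ge 8/r$. That inequality is false in general, so no reweighting of the moment-generating-function step will close the large-$\eta$ regime you flag.

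The term $C\eta m/r$ in Lemma~\ref{lem:rooted-coloring} is not an artifact. The bound \eqref{eq:rooted-partition-upper} forgets every edge between $S$ and $T$, and for a root of small degree the factor $(1-\eta)^{d_H(u)}$ does not compensate. Take $H=K_{h-1}$ plus a pendant edge $uw$, root $u$, $r=C_Th$ and $\eta=1/2$. Then $H[S]$ and $H[T]$ are cliques, and
$\log\bigl(p_r(K_s)p_r(K_t)/p_r(K_{s+t})\bigr)=st/r+O(h^3/r^2)$.
With $s\approx t\approx h/2$ this gives $R_u(1/2)/p_r(H)\ge\tfrac12 e^{ch/C_T}$ for an absolute $c>0$. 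This exceeds $h$ once $h$ is large compared with $C_T$, and since $C_T$ is absolute such $H$ exist.

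Your small-$\eta$ step is also wrong. The saving is $\sum_u\bigl(1-e^{-\frac78 d_H(u)\eta}\bigr)\asymp\sum_u\min(d_H(u)\eta,1)$, not $\min(m\eta,h)$. Take a clique on $h^{2/3}$ vertices plus a perfect matching on the rest, with $\eta=h^{-1/3}$. Then $m\eta\approx h/2$, which is in your ``easy'' regime. But the saving is only of order $h^{2/3}$, while your loss $h\bigl(e^{4\eta m/r}-1\bigr)$ is of order $h/C_T$.

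The missing idea is to handle low-degree roots without $R_u$ at all. Split $V(H)$ at degree $Km/r$ for a large absolute $K$, and write $D_{\mathrm{lo}},D_{\mathrm{hi}}$ for the degree sums over the two classes.

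\emph{High roots.} If $d_H(u)\ge Km/r$, then $C\eta m/r\le (C/K)\eta d_H(u)$ is absorbed. Lemma~\ref{lem:rooted-coloring} then gives $R_u(\eta)\le p_r(H)e^{-\eta d_H(u)/2}$ for $\eta\ge 8/r$.

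\emph{Low roots.} If $d_H(u)<Km/r$, use the $\eta$-independent bound $I_{G,u}(v)\le\inj(H-u,G-v)\le p_r(H-u)n^{h-1}+O_{H,r}(n^{h-2})$ from the induction hypothesis. Combine it with $p_r(H-u)\le p_r(H)e^{Cd_H(u)/r}$ from Corollary~\ref{cor:color-deletion}. The total excess from low roots is then $O(D_{\mathrm{lo}}/r)=O(Khm/r^2)$. This has no factor $h$ multiplying $\eta m/r$ and no dependence on $\eta$.

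\emph{Comparison.} Since $D_{\mathrm{lo}}\le Khm/r\le m/2$, we get $D_{\mathrm{hi}}\ge 3m/2$. The high roots therefore save at least $\eta D_{\mathrm{hi}}/(2+\eta h)\ge 4m/r$ for every $\eta\ge 8/r$, using monotonicity in $\eta$ and $r\ge 8h$. This dominates the low-root excess once $C_T\gg K$, and contradicts Lemma~\ref{lem:cloning}.
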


\begin{proof}
Choose a sufficiently large absolute constant $K$. Next choose $C_T$, depending only on $K$ and the absolute constants in the preceding estimates, so that all absorptions below hold. Finally, take $n$ sufficiently large in terms of the fixed $H$ and $r$. Partition the vertices of $H$ into
\[
V_{\mathrm{lo}}
=\left\{u:d_H(u)<\frac{Km}{r}\right\},
\qquad
V_{\mathrm{hi}}=V(H)\setminus V_{\mathrm{lo}}.
\]
Write
\[
D_{\mathrm{lo}}=\sum_{u\in V_{\mathrm{lo}}}d_H(u),
\qquad
D_{\mathrm{hi}}=\sum_{u\in V_{\mathrm{hi}}}d_H(u).
\]
By the choice of $C_T$,
\begin{equation}\label{eq:D-low-high}
D_{\mathrm{lo}}\le\frac{Khm}{r}\le\frac m2,
\qquad
D_{\mathrm{hi}}=2m-D_{\mathrm{lo}}\ge\frac{3m}{2}.
\end{equation}

Fix $v\in V(G)$ and let $\eta$ be as in \eqref{eq:A-B-eta}. We shall sum the numbers of maps according to the vertex of $H$ sent to $v$ and according to its degree class:
\begin{equation}\label{eq:rooted-class-sum}
I_G(v)
=\sum_{u\in V(H)}I_{G,u}(v)
=\sum_{u\in V_{\mathrm{hi}}}I_{G,u}(v)
+\sum_{u\in V_{\mathrm{lo}}}I_{G,u}(v).
\end{equation}
If $\eta=1$, then $v$ is isolated in $G$. Since $H$ has no isolated vertices, no injective copy of $H$ can contain $v$, so $I_G(v)=0$, contradicting Lemma~\ref{lem:cloning} for sufficiently large $n$. Hence $\eta<1$. Suppose for a contradiction that $\eta\ge8/r$.

For $u\in V_{\mathrm{hi}}$, Lemma~\ref{lem:rooted-coloring}, the inequality $\log(1-\eta)\le-\eta$, and the choices of $K,C_T$ give
\begin{equation}\label{eq:Ru-high}
R_u(\eta)\le p_r(H)e^{-\eta d_H(u)/2}.
\end{equation}
Indeed, $d_H(u)/r\le\eta d_H(u)/8$. Since $d_H(u)\ge Km/r$, the term $C\eta m/r$ is at most $(C/K)\eta d_H(u)$ and is absorbed by first choosing $K$ large. The terms $Chd_H(u)/r^2$ and $Cmh/r^3$ are then absorbed by choosing $C_T$ large, using $r\ge C_Th$ and $\eta\ge8/r$. Thus the whole exponent is at most $-\eta d_H(u)/2$. Summing \eqref{eq:rooted-Ru} and \eqref{eq:Ru-high} over $V_{\mathrm{hi}}$ gives
\begin{equation}\label{eq:high-rooted-sum}
\sum_{u\in V_{\mathrm{hi}}}I_{G,u}(v)
\le p_r(H)n^{h-1}
\sum_{u\in V_{\mathrm{hi}}}e^{-\eta d_H(u)/2}
+O_{H,r}(n^{h-2}).
\end{equation}
Since $1-e^{-x/2}\ge x/(2+x)$ for $x\ge0$,
\begin{align}
\sum_{u\in V_{\mathrm{hi}}}e^{-\eta d_H(u)/2}
&=|V_{\mathrm{hi}}|
-\sum_{u\in V_{\mathrm{hi}}}\bigl(1-e^{-\eta d_H(u)/2}\bigr)\notag\\
&\le |V_{\mathrm{hi}}|
-\sum_{u\in V_{\mathrm{hi}}}\frac{\eta d_H(u)}{2+\eta d_H(u)}\notag\\
&\le |V_{\mathrm{hi}}|-\frac{\eta D_{\mathrm{hi}}}{2+\eta h}.
\label{eq:high-exponential-sum}
\end{align}

For $u\in V_{\mathrm{lo}}$, forget every edge condition incident with the root. The induction hypothesis applied to $H-u$ gives
\[
I_{G,u}(v)
\le\inj(H-u,G-v)
\le p_r(H-u)n^{h-1}+O_{H,r}(n^{h-2}).
\]
Deleting the $d_H(u)$ edges incident with $u$ and applying Corollary~\ref{cor:color-deletion} yields
\[
p_r(H-u)\le p_r(H)e^{Cd_H(u)/r}.
\]
Since $d_H(u)\le Km/r$ and $r\ge C_Th$, the exponent is uniformly small. Hence
\begin{align}
\sum_{u\in V_{\mathrm{lo}}}I_{G,u}(v)
&\le p_r(H)n^{h-1}
\sum_{u\in V_{\mathrm{lo}}}e^{Cd_H(u)/r}
+O_{H,r}(n^{h-2})\notag\\
&\le p_r(H)n^{h-1}
\left(|V_{\mathrm{lo}}|+C\frac{D_{\mathrm{lo}}}{r}\right)
+O_{H,r}(n^{h-2}).
\label{eq:low-rooted-sum}
\end{align}
Combining \eqref{eq:rooted-class-sum}, \eqref{eq:high-rooted-sum}, \eqref{eq:high-exponential-sum} and \eqref{eq:low-rooted-sum}, we obtain
\[
I_G(v)
\le p_r(H)n^{h-1}
\left(h+C\frac{D_{\mathrm{lo}}}{r}
-\frac{\eta D_{\mathrm{hi}}}{2+\eta h}\right)
+O_{H,r}(n^{h-2}).
\]
By \eqref{eq:D-low-high}, $\eta\ge8/r$, and $r\ge8h$, we obtain
\begin{equation}\label{eq:degree-gap}
C\frac{D_{\mathrm{lo}}}{r}\le C\frac{Khm}{r^2},
\qquad
\frac{\eta D_{\mathrm{hi}}}{2+\eta h}\ge c\frac mr.
\end{equation}
More explicitly, $D_{\mathrm{lo}}\le Khm/r$ makes the first term at most $CKhm/r^2$. Also $D_{\mathrm{hi}}\ge3m/2$, while $\eta\ge8/r$ and $r\ge8h$ imply
\[
\frac{\eta D_{\mathrm{hi}}}{2+\eta h}\ge\frac{4m}{r}.
\]
The choice of $C_T$ therefore makes the first term in \eqref{eq:degree-gap} at most half the second. Thus, for some constant $c_{H,r}>0$,
\[
I_G(v)
\le hp_r(H)n^{h-1}-c_{H,r}n^{h-1}+O_{H,r}(n^{h-2}),
\]
contradicting Lemma~\ref{lem:cloning} for sufficiently large $n$. Therefore $\eta<8/r$ for every $v$, which proves \eqref{eq:min-degree} and \eqref{eq:rho-extremal}.
\end{proof}

\subsection{F\"uredi reduction and completion}\label{subsec:completion}

We use the following exact stability form of Tur\'an's theorem.

\begin{theorem}[F\"uredi \cite{Furedi}]\label{thm:Furedi}
Every $n$-vertex $K_{r+1}$-free graph $F$ has a spanning, at most $r$-partite subgraph $F_0$ such that
\[
e(F_0)\ge2e(F)-e(T_r(n)).
\]
\end{theorem}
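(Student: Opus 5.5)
The plan is to build the partition greedily by repeatedly picking a vertex of maximum degree inside the current common neighbourhood, and then to double-count the edges of $F$ against the complete multipartite graph on the resulting parts. This is Füredi's original argument. Throughout, $d_U(v)$ denotes the number of neighbours of $v$ inside a set $U$.

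\emph{Construction.} Set $U_1=V(F)$. For $i\ge1$, while $U_i\neq\varnothing$, do the following:
\begin{itemize}
\item choose $x_i\in U_i$ maximizing $d_{U_i}(x_i)$;
\item set $U_{i+1}=U_i\cap N_F(x_i)$ and $V_i=U_i\setminus U_{i+1}$.
\end{itemize}
The chosen vertices $x_1,x_2,\dots$ are pairwise adjacent, since each $x_j$ lies in $N_F(x_i)$ for every $i<j$. Because $F$ is $K_{r+1}$-free, the process stops after some $k\le r$ steps. The sets $V_1,\dots,V_k$ therefore partition $V(F)$, and we pad with empty parts to obtain $r$ parts.

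Let $F_0$ be the spanning subgraph consisting of the edges of $F$ whose endpoints lie in different parts. Then $F_0$ is at most $r$-partite, and
\[
e(F_0)=e(F)-e_{\mathrm{in}},\qquad e_{\mathrm{in}}:=\sum_i e(F[V_i]).
\]

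\emph{Key inequality.} We claim
\[
e(F)+e_{\mathrm{in}}\le e(K_{|V_1|,\dots,|V_r|}).
\]
Consider
\[
\Sigma=\sum_i\sum_{v\in V_i}d_{U_i}(v).
\]
Since $U_i=V_i\cup V_{i+1}\cup\cdots$, each edge with both ends in some $V_i$ is counted twice in $\Sigma$. An edge between $V_i$ and $V_j$ with $i<j$ is counted exactly once, namely from its endpoint in $V_i$, because that edge lies inside $U_i$ but not inside $U_j$. Hence $\Sigma=e(F)+e_{\mathrm{in}}$.

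On the other hand, the maximality in the choice of $x_i$ gives, for every $v\in V_i\subseteq U_i$,
\[
d_{U_i}(v)\le d_{U_i}(x_i)=|U_{i+1}|=|U_i|-|V_i|.
\]
Summing over $v$ and $i$ yields
\[
\Sigma\le\sum_i|V_i|\bigl(|V_{i+1}|+|V_{i+2}|+\cdots\bigr)=e(K_{|V_1|,\dots,|V_r|}),
\]
which proves the claim.

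\emph{Conclusion.} By Turán's theorem, the complete $r$-partite graph on $n$ vertices with the most edges is $T_r(n)$. Hence
\[
e_{\mathrm{in}}\le e(T_r(n))-e(F),
\]
and therefore
\[
e(F_0)=e(F)-e_{\mathrm{in}}\ge2e(F)-e(T_r(n)).
\]

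The only step needing real care is the double count, specifically checking that cross edges are counted exactly once. This relies on measuring degrees inside $U_i$ rather than in the whole graph.
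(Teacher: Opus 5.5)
Your proof is correct and complete. It is F\"uredi's original greedy maximum-degree argument; the paper does not reproduce a proof but cites F\"uredi, so your route is the one the paper relies on. Your double count $\Sigma=e(F)+e_{\mathrm{in}}\le e(K_{|V_1|,\ldots,|V_r|})$ is right: an edge between $V_i$ and $V_j$ with $i<j$ is counted only from $V_i$, because $V_i\cap U_j=\varnothing$. The final step only needs the elementary fact that a complete $r$-partite graph on $n$ vertices has at most $e(T_r(n))$ edges, which follows by balancing part sizes and needs no further input.
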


F\"uredi states the result in the equivalent form that, if $t=e(T_r(n))-e(F)$, then one can retain at least $e(F)-t$ edges in an at most $r$-partite subgraph. Vertices omitted from the displayed parts may be added as isolated vertices.

\begin{proof}[Proof of Theorem~\ref{thm:main}]
Choose a partition
\[
\mathcal P=(A_1,\ldots,A_r)
\]
of $V(G)$ that maximizes the number of crossing edges of $G$. Let $G_0$ consist of these crossing edges, and let $K_{\mathcal P}$ be the complete $r$-partite graph with the same classes. Define
\[
L=e(G)-e(G_0),
\qquad
M=e(K_{\mathcal P})-e(G_0),
\qquad
B=e(T_r(n))-e(K_{\mathcal P}).
\]
Thus $L$ is the number of deleted internal edges, $M$ is the number of missing crossing edges, and $B$ is the loss caused by unbalanced class sizes.

By maximality of the cut, $e(G_0)$ is at least the number of edges in the spanning at most $r$-partite subgraph supplied by Theorem~\ref{thm:Furedi}. Therefore
\begin{equation}\label{eq:maxcut-Furedi}
e(G_0)\ge2e(G)-e(T_r(n)).
\end{equation}
Substituting
\[
e(G)=e(G_0)+L,
\qquad
e(T_r(n))=e(G_0)+M+B
\]
into \eqref{eq:maxcut-Furedi} and cancelling $e(G_0)$ gives
\begin{equation}\label{eq:Furedi-assembly}
2L\le B+M.
\end{equation}

We next verify that all intermediate graphs are dense enough for the switching estimates. If $v\in A_i$, moving $v$ from $A_i$ to $A_j$ cannot increase the number of crossing edges. Hence
\begin{equation}\label{eq:maxcut-vertex}
e_G(v,A_i)\le e_G(v,A_j)
\qquad(j\ne i).
\end{equation}
Summing \eqref{eq:maxcut-vertex} over $j\ne i$ gives
\[
(r-1)e_G(v,A_i)\le d_G(v)-e_G(v,A_i),
\qquad
e_G(v,A_i)\le\frac{d_G(v)}{r}<\frac nr.
\]
By Lemma~\ref{lem:min-degree}, deleting all internal edges increases the normalized missing degree by at most $1/r$, while adding crossing edges cannot increase it. Thus every graph encountered while passing from $G$ to $G_0$ by deletions and then from $G_0$ to $K_{\mathcal P}$ by additions satisfies
\begin{equation}\label{eq:rho-intermediate}
\rho<\frac9r.
\end{equation}
Moreover, if $v\in A_i$, every vertex of $A_i$, including $v$, is a nonneighbor of $v$ in $G_0$. Consequently,
\begin{equation}\label{eq:part-flatness}
\frac{|A_i|}{n}\le\rho(G_0)<\frac9r,
\qquad
\max_i|A_i|\le\frac{10n}{r}
\end{equation}
for sufficiently large $n$.

Choose $C_T$ sufficiently large, and then choose $n$ sufficiently large, so that all switching and balancing estimates apply. Set
\[
\varepsilon_n=C\frac{h}{r}+O_{H,r}(n^{-1})<\frac14,
\]
where $C$ dominates the constants in Corollaries~\ref{cor:finite-switching} and \ref{cor:discrete-balancing}.

Before taking logarithms, we verify that every count below is positive. The inequality $h\rho<1$ permits a greedy injective embedding of $H$ into each intermediate dense graph: when a vertex of $H$ is embedded, fewer than $h$ already used vertices and at most $h\rho n$ forbidden nonneighbors have to be avoided. At the complete multipartite endpoints, positivity follows from $r\ge h$ by assigning distinct parts to the vertices of $H$.

Delete the $L$ internal edges one at a time. Corollary~\ref{cor:finite-switching}(2), together with \eqref{eq:rho-intermediate}, gives
\begin{equation}\label{eq:delete-internal}
\log\frac{\inj(H,G)}{\inj(H,G_0)}
\le\frac{2m}{n^2}(1+\varepsilon_n)L.
\end{equation}
Next add the $M$ missing crossing edges one at a time. Corollary~\ref{cor:finite-switching}(1), again together with \eqref{eq:rho-intermediate}, gives
\begin{equation}\label{eq:add-crossing}
\log\frac{\inj(H,K_{\mathcal P})}{\inj(H,G_0)}
\ge\frac{2m}{n^2}(1-\varepsilon_n)M.
\end{equation}
Finally, \eqref{eq:part-flatness} verifies the flatness hypothesis of Corollary~\ref{cor:discrete-balancing}. That corollary gives
\begin{equation}\label{eq:balance-final}
\log\frac{\inj(H,T_r(n))}{\inj(H,K_{\mathcal P})}
\ge\frac{2m}{n^2}(1-\varepsilon_n)B.
\end{equation}
Adding \eqref{eq:add-crossing} and \eqref{eq:balance-final}, and then subtracting \eqref{eq:delete-internal}, yields
\begin{align}
\log\frac{\inj(H,T_r(n))}{\inj(H,G)}
&\ge\frac{2m}{n^2}
\bigl((1-\varepsilon_n)(B+M)-(1+\varepsilon_n)L\bigr)\notag\\
&\ge\frac{m(B+M)}{n^2}(1-3\varepsilon_n).
\label{eq:final-comparison}
\end{align}
The second inequality follows from \eqref{eq:Furedi-assembly}:
\begin{align*}
(1-\varepsilon_n)(B+M)-(1+\varepsilon_n)L
&\ge(1-\varepsilon_n)(B+M)
-\frac{1+\varepsilon_n}{2}(B+M)\\
&=\frac{1-3\varepsilon_n}{2}(B+M).
\end{align*}
If $B+M>0$, then \eqref{eq:final-comparison} is positive, contrary to the extremality of $G$. Hence $B=M=0$. Equation \eqref{eq:Furedi-assembly} then gives $L=0$, so $G=K_{\mathcal P}$. Since $B=0$, the part sizes differ by at most one. Indeed, moving one vertex from a part that exceeds another by at least two would increase the number of edges. Therefore $G\cong T_r(n)$. This completes the induction and proves Theorem~\ref{thm:main}.
\end{proof}

\section{Conclusioning remarks}
In this paper, we have used the cluster method to prove the vertex-linear Tur\'an-goodness threshold and the chromatic-polynomial monotonicity. The cluster method is a powerful tool for studying the structure of graphs and their properties. 

  In Theorem~\ref{thm:main}, the stability statement also holds. More precisely, if
  $e(H)>0$ and $r\ge Cv(H)$, then, for every $\varepsilon>0$, there exist
  $\delta>0$ and $n_0$ such that every $n\ge n_0$ and every $n$-vertex
  $K_{r+1}$-free graph $G$ satisfying
  \[
  N(H,G)\ge N(H,T_r(n))-\delta n^{v(H)}
  \]
  can be transformed into $T_r(n)$ by adding and deleting at most
  $\varepsilon n^2$ edges. Thus $H$ is $K_{r+1}$-Tur\'an-stable.
  
  Indeed, the cloning argument used in the proof of
  Lemma~\ref{lem:min-degree} eliminates all low-degree vertices after
  $o(n)$ symmetrization steps, and the switching and balancing argument
  leading to \eqref{eq:final-comparison} then gives the required
  $o(n^2)$ edit bound, compare \cite{PartI}. By the standard stability
  transfer for generalized Tur\'an problems, the same conclusion holds
  with $K_{r+1}$ replaced by any fixed graph $F$ with
  $\chi(F)=r+1$. Moreover, if $F$ has a color-critical edge, the standard
  exactness reduction to complete $r$-partite graphs, together with
  Theorem~\ref{thm:main}, gives
  \[
  \operatorname{ex}(n,H,F)=N(H,T_r(n))
  \]
  for all sufficiently large $n$. In particular, $H$ is
  $F$-Tur\'an-good in the same vertex-linear range. We omit the routine
  details in order to keep the presentation concise.

\section*{Acknowledgments}

The authors also acknowledge the use of OpenAI's ChatGPT 5.6 as an interactive tool for exploring proof strategies, checking calculations, and improving exposition.
More specifically, it helped to prove Lemma \ref{lem:polymer-perturbation}, which brings the cluster method to the present problem.
The authors take full responsibility for every statement, proof, citation, and conclusion in the paper.

\end{document}